\documentclass[pdflatex,sn-mathphys-num]{sn-jnl}

\usepackage{graphicx}%
\usepackage{multirow}%
\usepackage{amsthm}%
\usepackage{amsmath,amssymb,amsfonts}%
\usepackage{mathrsfs}%
\usepackage[title]{appendix}%
\usepackage{xcolor}%
\usepackage{textcomp}%
\usepackage{manyfoot}%
\usepackage{booktabs}%
\usepackage{algorithm}%
\usepackage{algorithmicx}%
\usepackage{algpseudocode}%
\usepackage{listings}%
\usepackage{csquotes}
\usepackage{stmaryrd}
\usepackage{placeins}
\usepackage{hyperref}
\usepackage{bookmark}
\newcommand{\bo}{\boldsymbol}

\newcommand{\OO}{\Omega}
\newcommand{\dd}{{\rm d}}
\newcommand{\bs}{\boldsymbol \sigma}
\newcommand{\bt}{\boldsymbol \tau}
\newcommand{\ddiv}{\operatorname{div}}
\newcommand{\spn}{\operatorname{span}}

\newcommand{\E}{\mathcal E}
\newcommand{\R}{\mathbb R}
\newcommand{\T}{\mathcal T}

\newcommand{\Sp}{\operatorname{span}}

\theoremstyle{thmstyleone}%
\newtheorem{lemma}{Lemma}
\newtheorem{theorem}{Theorem}
\newtheorem{corollary}{Corollary}
\theoremstyle{thmstyletwo}%
\newtheorem{remark}{Remark}%

\theoremstyle{thmstylethree}%

\begin{document}

\title[Refined convergence structures of the rectangular Raviart-Thomas element]{Refined convergence structures of the rectangular Raviart-Thomas element}


\author[1,2]{\fnm{Yifan} \sur{Yue}}\email{yueyifan@stu.xmu.edu.cn}

\author*[1,2]{\fnm{Hongtao} \sur{Chen}}\email{chenht@xmu.edu.cn}

\author[3,4]{\fnm{Shuo} \sur{Zhang}}\email{szhang@lsec.cc.ac.cn}

\affil*[1]{\orgdiv{School of Mathematical Sciences}, \orgname{Xiamen University}, \orgaddress{\city{Xiamen}, \postcode{361005}, \country{China}}}

\affil[2]{\orgdiv{Fujian Provincial Key Laboratory of Mathematical Modeling and High-Performance Scientific Computing}, \orgname{Xiamen University}, \orgaddress{\city{Xiamen}, \postcode{361005}, \country{China}}}

\affil[3]{\orgdiv{LSEC, Institute of Computational Mathematics and Scientific/Engineering Computation, Academy of Mathematics and System Science}, \orgname{ Chinese Academy of Sciences}, \orgaddress{\city{Beijing}, \postcode{100190}, \country{China}}}

\affil[4]{\orgdiv{School of Mathematical Sciences}, \orgname{Chinese Academy of Sciences}, \orgaddress{\city{Beijing}, \postcode{100049}, \country{China}}}


\abstract{In this work, we fully explore three refined convergence structures of the lowest-order rectangular Raviart-Thomas element in solving the Laplace eigenvalue problem. Firstly, the scheme possesses a property of supercloseness between the discrete eigenfunctions and the interpolated ones, so that post-processing can be easily constructed to improve the accuracy at most by one order. The essentially skillful method is the integral expansion for interpolation terms. Secondly, based on the supercloseness property, we derive the error expansions for not only simple eigenvalues but also multiple eigenvalues, and provide a rigorous proof for them, based on which Richardson extrapolation can be performed. As a byproduct, we prove that all eigenvalues converge from above. Moreover, by utilizing the supercloseness property and Rayleigh quotient analysis, we give a rigorous proof for the convergence behavior for multiple eigenvalues on uniform meshes for the problem on the square domain. Thirdly, the equivalence between the lowest-order rectangular Raviart-Thomas element and the enriched rotated bilinear element is also indicated.  At the last of this work, several numerical experiments are designed to demonstrate our theory.}

\keywords{Laplace eigenvalue problem, rectangular Raviart-Thomas element, supercloseness, error expansion, equivalence of finite elements}


\pacs[MSC Classification]{65N30, 47A75, 49R05}

\maketitle

\section{Introduction}\label{sec1}

Eigenvalue problems play an important role in engineering society \cite{babuska}. Many eigenvalue problems and corresponding Garlekin methods were discussed by Babu\v{s}ka and Osborn in a series of work \cite{babuska,babuska87,babuska89}, in which a universal framework for convergence structures was constructed. For the numerical computation of eigenvalue problems by finite elements, refined convergence structures are concerned. Some scholars are interested in the eigenvalue bounds \cite{armentano2004asymptotic,Hu.J;Huang.Y;Lin.Q2014,Li.Y2008}, which help estimate the location interval of eigenvalues. Other scholars show their interest in the accuracy of eigenfunctions and eigenvalues \cite{chen11,linxie,linxie12,sheng21,yang13}, for example, superconvergence and extrapolation.

In this work, the model problem is the Laplace eigenvalue problem with Dirichlet boundary condition. For the numerical method, we consider the lowest-order rectangular Raviart-Thomas (RRT) element, and solve three main issues of refined convergence structures: supercloseness of eigenfunctions, error expansions for eigenvalues and the equivalence of finite elements.

The RRT element was first proposed along with triangular Raviart-Thomas element in \cite{rt} in 1977. There has been lots of literature studying the superconvergence and extrapolation of mixed schemes for second order elliptic problems \cite{brand94,chen11,duran90,douglas89,fair06,lin06,linxie,linxie12,sheng21,yang13}. For the lowest-order triangular Raviart-Thomas element for second order elliptic problems, \cite{brand94} studies the superconvergence and a posteriori error estimation, \cite{duran99} considers a posteriori error estimators for the Laplace eigenvalue problem, and \cite{chen11} provides the superconvergence property and postprocessing for eigenfunctions and eigenvalues. For the lowest-order RRT element, \cite{duran90} studies the superconvergence property on global domain and at Gaussian points, \cite{fair06} proposes the error expansions for solutions based on which an extrapolation rule is proposed, while \cite{linxie} considers eigenvalue problem, and proves a suboptimal result for Richardson extrapolation, but \cite{linxie} does not consider the superconvergence of eigenfunctions. Some other work provides framework on different elements, for example, \cite{douglas89} gives the superconvergence along Gauss lines for Brezzi-Douglas-Fortin-Marini (BDFM) element, \cite{linxie12} provides a framework of superconvergence for a second order elliptic eigenvalue problem for general mixed elements with commuting diagram property, and \cite{yang13} studies the error expansion and Richardson extrapolation for eigenvalues computed by the extended rotated bilinear (${\rm E}Q_1^{\operatorname{rot}}$) element. For the first issue, we consider the supercloseness of Laplacian eigenfunctions by error expansions and propose postprocessing interpolations for better accuracy.

Lin et al. \cite{lin06} propose a universal framework for the error expansions of Laplacian eigenvalues and biharmonic eigenvalues. By exploiting the Bramble-Hilbert Lemma, they provide an available methodology for expanding the integral terms with interpolation errors. Several finite elements for Laplacian eigenvalues are considered, including triangular linear ($P_1$) element, rectangular bilinear ($Q_1$) element, rotated bilinear ($Q_1^{\operatorname{rot}}$) element, ${\rm E}Q_1^{\operatorname{rot}}$ element and high-order polynomial elements. Some integral expansion of mixed elements are also considered. However, there still remain some unsolved problems. Compared to \cite{linxie}, for the mixed problem, what is the optimal error expansions of the lowest-order RRT element? To solve this problem, we derive a new expansion for mixed elements, and improve the convergence rate of the residual of integral expansion in \cite{lin06} to the optimal. The supercloseness property is introduced for a delicate expansion for eigenvalues. As byproducts, we prove that the numerical eigenvalues by lowest-order RRT converge to exact ones from above for general rectangular meshes on rectangular domains, and provide a lower bound for the error of eigenvalues. Moreover, for the problem on the square domain, we prove the behavior of multiple eigenvalues on uniform meshes that eigenfunctions tend to distinguish from each other by their \enquote{frequencies}.

In 2015, Hu and Ma \cite{HuMa15} prove the equivalence between the enriched Crouzeix-Raviart and Raviart-Thomas elements on triangular meshes. Their work dates back to the pioneer work by Arnold and Brezzi \cite{arnold85} in 1985, and Marini \cite{marini85} in the same year. In 2026, the equivalence property on simplex meshes in two dimensions is further indicated as symmetry between the Whitney form complex and its nonconforming dual complex in $n$ dimensions by Zhang \cite{zhang26}, in the perspective of finite element exterior calculus. The ${\rm E}Q_1^{\operatorname{rot}}$ and RRT elements can be viewed as the counterparts of the enriched Crouzeix-Raviart and Raviart-Thomas elements on rectangular meshes, respectively. Inspired by the above work, we prove the equivalence between the ${\rm E}Q_1^{\rm rot}$ and RRT elements. As the third issue of our work, the equivalence property provides a perspective for reviewing the finite elements and extends the applicability of our results.

The outline of this work is organized as follows. In Section \ref{sec:pre}, we introduce some necessary notations, basis of RRT element and the spectral theory, including the errors of interpolation, eigenvalues and eigenspaces. In Section \ref{sec:spc}, we derive the supercloseness property for the eigenvalue problem, and propose the superconvergence for eigenfunctions after postprocessing. In section \ref{sec:exp}, we derive the expansion for Laplacian eigenvalues, improve the convergence rate of residual to the optimal, and derive a rigorous proof for the convergence behavior for multiple eigenvalues of the square domain on uniform meshes. At the end of this section, we provide the extrapolation formula for eigenvalues. Besides, we obtain the upper bound property of discrete eigenvalues and provide a lower bound for the error of eigenvalues. In Section \ref{sec:equ}, we prove the equivalence between the projected ${\rm E}Q_1^{\rm rot}$ element and RRT elements for both the Poisson equation and the Laplace eigenvalue problem. In Section \ref{sec:num}, we show our numerical experiments to verify the validity of our theoretical results.

\section{Preliminaries}
\label{sec:pre}

In this section, we introduce notations and the model problem. Let $\Omega$ be the considered connected domain which can be divided into non-overlapping rectangles without hanging points. We use usual notations $\nabla, \ddiv, \Delta$ to denote the gradient, divergence and Laplace operators, respectively. Throughout the paper, $L^2(\OO)$ and $H^s(\OO)~(s>0)$ are the usual Sobolev spaces on $\OO$ endowed with norms $\|\cdot\|_{0,\OO}$ and $\|\cdot\|_{s,\OO}$, respectively. Generally, $|\cdot|_{s,\OO}$ is the s-seminorm on $H^s(\OO)$. The $L^2$ inner product on $\OO$ is denoted by $(\cdot,\cdot)_{\OO}$. We omit the subscript $\OO$ in the notations of norms, seminorms and inner product if no ambiguity is introduced. The Sobolev space $\bo{H}(\ddiv,\OO)$ is defined as
$$\bo{H}(\ddiv,\OO):=\{\bt\in \bo{L}^2(\OO):\ddiv\bt\in L^2(\OO)\}$$
endowed with norm $\|\cdot\|_{\ddiv}$ with $\|\bt\|_{\ddiv}^2 = \|\bt\|_0^2 + \|\ddiv\bt\|_0^2$ for all $\bt\in\bo{H}(\ddiv,\OO)$. We use \textbf{bold} characters for vector-valued quantities hereinafter.

The Laplace eigenvalue problem with Dirichlet boundary condition seeks $(\lambda,u)\in \R\times H^2(\OO)$ with $\|u\|_0=1$ such that
\begin{equation}\label{eigp}
\left\{
\begin{aligned}
-\Delta u &= \lambda u,~{\rm in}~\OO,\\
u &= 0,~{\rm on}~\partial\OO.
\end{aligned}
\right.
\end{equation}

For the simplicity of notations, let $\bo{\Sigma}$ be the Sobolev space $\bo{H}(\ddiv,\OO)$ and $V$ be $L^2(\OO)$. Now we introduce $\bs=-\nabla u$ so that $\ddiv\bs = \lambda u$, and then rewrite the strong problem \eqref{eigp} into the weak mixed problem by integral by parts: find $(\lambda,\bs,u)\in\R\times\bo{\Sigma}\times V$ such that $\forall~(\bt,v)\in\bo{\Sigma}\times V$,
\begin{equation}\label{eigmp}
\left\{
\begin{aligned}
(\bs,\bt)-(\ddiv\bt,u)&= 0,\\
(\ddiv\bs,v)&= \lambda(u,v).
\end{aligned}
\right.
\end{equation}

There exists a series of eigenpairs $(\lambda_k,\bs_k,u_k)$ of \eqref{eigmp} such that
$$0<\lambda_1\leq\lambda_2\leq\cdots\leq\lambda_k\nearrow\infty$$
and $(u_k,u_l) = \delta_{k,l}$, where $\delta_{k,l}$ is the Kronecker symbol \cite{babuska}. An easy observation to \eqref{eigmp} gives that $(\bs_k,\bs_l) = (\ddiv\bs_l,u_k) = \lambda_k(u_k,u_l)$, which indicates that $\{\bs_k\}$ form an orthogonal sequence with $L^2$ norm $\{\lambda_k^{1/2}\}$. Denote the eigenspace of $\lambda_k$ as $E_k$, $\bo{F}_k$, respectively. That is,
$$E_k = \bigoplus_{j\in\Lambda_k}\spn\{u_j\}~{\rm and}~\bo{F}_k = \bigoplus_{j\in\Lambda_k}\spn\{\bs_j\},$$
where $\Lambda_k$ is the index set defined as $\Lambda_k = \{j:\lambda_j = \lambda_k\}$.

The RRT element is the counterpart of the triangular Raviart-Thomas element on rectangular meshes. Let $\T_h$ be a rectangular mesh grided by node vectors $X=\{x_i\}_{i=0}^{n_1}$ and $Y = \{y_j\}_{j=0}^{n_2}$:
$$x_0 <x_1<\cdots<x_{n_1},~y_0 <y_1<\cdots<y_{n_2}.$$
Denote the size of the element $K_{i,j}$ grid by $x = x_{i-1}, x = x_i$ and $y = y_{j-1}, y = y_j$ as $h_{x_i}\times h_{y_j}$, where $h_{x_i} = x_i - x_{i-1}$ and $h_{y_j} = y_j - y_{j-1}$ for $i = 1,2,\ldots,n_1$ and $j = 1,2,\ldots,n_2$. Since we only consider the elements in $\overline{\Omega}$, we may rewrite $\T_h$ as
$$\T_h = \{K_{i,j}:K_{i,j}\subset\overline{\Omega}\}$$
when $\overline{\Omega}$ is not rectangular. Through the paper, quantities with subscript $\cdot_h$ is mesh-dependent. Especially, difference operators with subscript $\cdot_h$ is done piece by piece. We say $\T_h$ is $a$-regular if there exists a constant $a$ independent of $i$ and $j$ such that $a^{-1}h_{y_j}\leq h_{x_i}\leq a h_{y_j}$ for all $K_{i,j}\in \T_h$. When $\T_h$ is 1-regular, we say $\T_h$ is uniform. If all the elements in $\T_h$ are congruent to each other, we say $\T_h$ is quasi-uniform. Obviously a uniform mesh must be quasi-unform. The mesh size of $\T_h$ is $h:=\max\{h_{x_i},h_{y_j}\}$. The lowest-order RRT element uses $\bo{H}(\ddiv)$-conforming functions as its basis. The finite element space pair is denoted as $\bo{\Sigma}_h\times V_h$, where
$$\bo{\Sigma}_h:=\Big\{\bt_h\in\bo{\Sigma}:\bt|_K\in Q_{10}(K)\times Q_{01}(K),~\forall K\in \T_h\Big\}$$
with $Q_{mn} = \Sp\{x^iy^j,~0\leq i\leq m,~0\leq j\leq n\}$, and
$$V_h:=\{v_h\in V:v_h|_{K}\in Q_{00}(K),~\forall K\in \T_h\}.$$
The discrete mixed problem is to find $(\lambda_h,\bs_h,u_h)\in\R\times\bo{\Sigma}_h\times V_h$ such that $\forall~(\bt_h,v_h)\in\bo{\Sigma}_h\times V_h$,
\begin{equation}\label{deigmp}
\left\{
\begin{aligned}
(\bs_h,\bt_h)-(\ddiv\bt_h,u_h)&= 0,\\
(\ddiv\bs_h,v_h)&= \lambda_h(u_h,v_h).
\end{aligned}
\right.
\end{equation}
Denote $\bo{\Sigma}_{h0}$ the kernel space of the divergence operator in $\bo{\Sigma}_h$:
$$\bo{\Sigma}_{h0} = \{\bt_h\in\bo{\Sigma}_h:\ddiv\bt_h = 0\}.$$
The stability condition is standard \cite{rt,brezzi,boffi} that there exist constants $\alpha_0,\beta_0>0$ independent to $h$ such that
$$(\bt_h,\bt_h)\geq \alpha_0 \|\bt_h\|_{\ddiv}^2,~\forall \bt_h\in \bo{\Sigma}_{h0},$$
$$\sup_{\bt_h\in \bo{\Sigma}_h}(\ddiv \bt_h,v_h)\geq \beta_0\|\bt_h\|_{\ddiv}\|v_h\|_0,~\forall v_h\in V_h.$$

Using the fact that $\ddiv \bo{\Sigma}_h\subset V_h$ we have $\ddiv\bs_h = \lambda_h u_h$.
There exists a series of eigenpairs $(\lambda_{k,h},\bs_{k,h},u_{k,h})$ of \eqref{deigmp} such that
$$0<\lambda_{1,h}\leq\lambda_{2,h}\leq\cdots\leq\lambda_{k,h}\leq \lambda_{N,h}$$
and $(u_{k,h},u_{l,h}) = \delta_{k,l}$, where $N$ is the number of elements of $\T_h$. An observation to \eqref{deigmp} gives $(\bs_{k,h},\bs_{l,h}) = (\ddiv\bs_{l,h},u_{k,h}) = \lambda_{k,h}(u_{k,h},u_{l,h})$, which indicates that $\{\bs_{k,h}\}$ form an orthogonal sequence with $L^2$ norm $\{\lambda_{k,h}^{1/2}\}$, just the same as the continuous case. The discrete eigenspaces are denoted as
$$E_{k,h} = \bigoplus_{j\in\Lambda_k}\spn\{u_{j,h}\},~\bo{F}_{k,h} = \bigoplus_{j\in\Lambda_k}\spn\{\bs_{j,h}\},$$
respectively.

Given $u\in H^3(\Omega)$, the error estimate of \eqref{deigmp} is standard \cite{boffi,brezzi}:
\begin{eqnarray}
\lambda_h-\lambda = O(h^2),\label{erreig}\\
\|\bs_h-\bs\|_0 = O(h),\label{errsigma}\\
\|\ddiv(\bs_h-\bs)\|_0 = O(h),\\
\|u_h-u\|_0 = O(h).\label{erru}
\end{eqnarray}
The spectral theory \cite{babuska} gives that $0$ is the only accumulation point of $\lambda^{-1}$. Then there exists \enquote{gap} $\rho_k$ for each eigenvalue $\lambda_k$ such that
$$\frac{\lambda_k}{|\lambda_k-\lambda_{j,h}|}\leq \rho_k,~\forall j~{\rm s.t.}~\lambda_j\neq\lambda_k,$$
when $h$ is small enough.

Let $R$ and $S$ be two finite dimensional subspaces of a Hilbert space $H$ endowed with norm $\|\cdot\|_H$ and $\dim(R) = \dim(S)>1$. Let $P_R$ and $P_S$ be the orthogonal projection operators onto $R$ and $S$, respectively. The gap between the two spaces $R$ and $S$ is defined as
$$\delta(R,S) = \sup_{x\in R,\|x\|_H=1}\|x-P_Sx\|_H.$$
Another definition gives $\tilde{\delta}(R,S) = \|P_R-P_S\|$, where $\|\cdot\|$ is the operator norm induced by $\|\cdot\|_H$. In the settings that $R$ and $S$ are finite dimensional Hilbert spaces, we have $\delta(R,S) = \delta(R,S) = \tilde{\delta}(R,S)$ \cite[Lemma 221]{kato58}. Hence it is no need to distinguish them.

For multiple eigenvalues $\lambda_k$, we have \cite{boffi,brezzi}
\begin{eqnarray}
\delta(E_k,E_{k,h}) = O(h),\\
\delta(\bo{F}_k,\bo{F}_{k,h}) = O(h).
\end{eqnarray}

Let $\bs_I\in V_h$ be the interpolation of $\bs$, i.e., for each edge $e$ of $K_{i,j}$,
\begin{equation}\label{intp}
\int_{e}(\bs-\bs_I)\cdot\bo{n}\dd s = 0.
\end{equation}
The interpolation error is standard \cite{brezzi}:
\begin{equation}
\|\bs-\bs_I\|_0 = O(h).\label{errintp}
\end{equation}

By Green Formula we have
$$\int_{K_{i,j}}\ddiv(\bs-\bs_I)\dd x\dd y = \sum_{e\subset\partial K_{i,j}}\int_{e}(\bs-\bs_I)\cdot\bo{n}\dd s = 0.$$
Then for all $v_h\in V_h$, it is easy to see
\begin{equation}\label{erreqintp}
(\ddiv(\bs-\bs_I),v_h) = 0.
\end{equation}

\section{Superconvergence}
\label{sec:spc}
In this section, we shall prove the supercloseness property between the finite element eigenfunctions and the interpolated eigenfunctions, and derive superconvergence after postprocessing. The supercloseness property is also helpful in the expansion of the error of eigenvalues.

The Ritz projection of eigenfunction is denoted as $(R_h\bs,S_hu)\in\bo{\Sigma}_h\times V_h$, which solves the problem
\begin{equation}\label{deigRh}
\left\{
\begin{aligned}
(R_h\bs,\bt_h)-(\ddiv\bt_h,S_hu) &= 0,&~&\forall \bt_h\in\bo{\Sigma}_h,\\
(\ddiv R_h\bs,v_h) &= \lambda(u,v_h),&~&\forall v_h\in V_h.
\end{aligned}
\right.
\end{equation}
The error equation gives
\begin{equation}\label{erreqRh}
\left\{
\begin{aligned}
(\bs-R_h\bs,\bt_h)-(\ddiv\bt_h,u-S_hu) &= 0,&~&\forall \bt_h\in\bo{\Sigma}_h,\\
(\ddiv (\bs-R_h\bs),v_h) &= 0,&~&\forall v_h\in V_h.
\end{aligned}
\right.
\end{equation}

The following Lemmas \ref{lem:integralExpansion} and \ref{lem:Ritesupc} indicate superconvergence for the Laplace problem.
\begin{lemma}[\cite{lin06},Theorem 4.27]\label{lem:integralExpansion}
Suppose that $\bs\in \bo{H}^3(\OO)\cap\bo{\Sigma}$, then for all $\bt_h = (\tau_{h1},\tau_{h2})^T\in \bo{\Sigma}_h$,
\begin{multline}\label{explem0}
(\bs-\bs_I,\bt_h) = -\frac13\sum_{K_{i,j}\in \T_h}\Big(\big(\frac{h_{x_i}}{2}\big)^2\int_{K_{i,j}}-u_{xxx}\tau_{h1}\dd x\dd y \\
+ \big(\frac{h_{y_j}}{2}\big)^2\int_{K_{i,j}}-u_{yyy}\tau_{h2}\dd x\dd y\Big) + O(h^3).
\end{multline}
\end{lemma}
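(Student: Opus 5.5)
The proof works element by element and treats the two components separately, since $(\bs-\bs_I,\bt_h)=\sum_{K}\int_K\big((\sigma_1-\sigma_{I,1})\tau_{h1}+(\sigma_2-\sigma_{I,2})\tau_{h2}\big)\dd x\dd y$. By symmetry it suffices to handle the first component. Fix $K=K_{i,j}$ with center $(x_c,y_c)$, and write $h_x=h_{x_i}/2$ and $h_y=h_{y_j}/2$. On $K$ the component $\tau_{h1}\in Q_{10}$, so $\tau_{h1}=\tau_{h1}(x_c,y_c)+(x-x_c)\partial_x\tau_{h1}$. The interpolant $\sigma_{I,1}\in Q_{10}$ is determined by the two edge means $\frac1{2h_y}\int\sigma_1(x_{i-1},y)\dd y$ and $\frac1{2h_y}\int\sigma_1(x_i,y)\dd y$. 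I would use the auxiliary functions $E(x)=\frac12\big((x-x_c)^2-h_x^2\big)$ and $F(y)=\frac16\big((y-y_c)^2-h_y^2\big)'$-type Legendre-type primitives, as in the standard Lin--Yan integral identity technique. With them, the error $\sigma_1-\sigma_{I,1}$ tested against $1$ and against $(x-x_c)$ can be expressed exactly through integrals of $\partial_x^2\sigma_1$ and $\partial_y^2\sigma_1$ with polynomial weights.

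\textbf{Constant part.} Since $\sigma_{I,1}$ is linear in $x$ and matches the edge means, $\int_K(\sigma_1-\sigma_{I,1})\dd x\dd y=\int_K\big(\sigma_1-\bar\sigma_1^{\rm lin}\big)$, where the linear interpolant in $x$ of the $y$-means has error $E(x)\partial_x^2\bar\sigma_1$. Integrating gives $\int_K E\,\partial_{x}^2\sigma_1=-\frac{h_x^2}{3}\int_K\partial_x^2\sigma_1+O(h^{\cdot})$ higher terms. Summing over $K$ with the weight $\tau_{h1}(x_c,y_c)$ does not obviously produce $O(h^3)$. So instead I would not split at the center value. I would integrate by parts in $x$ on the whole element, using $E(x_{i-1})=E(x_i)=0$, and transfer derivatives so that only $\partial_x\tau_{h1}$ (constant on $K$) appears. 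Concretely, I would write
$$\int_K(\sigma_1-\sigma_{I,1})\tau_{h1}=\int_K E\,\partial_x^2\sigma_1\,\tau_{h1}+\int_K(\text{$y$-fluctuation of }\sigma_1)\tau_{h1},$$
and then:
\begin{itemize}
\item expand $\partial_x^2\sigma_1$ about its element average;
\item use $\int E\,(x-x_c)\dd x=0$ by oddness;
\item use $\int E\dd x=-\frac{2}{3}h_x^3$, giving exactly $-\frac13h_x^2\int_K\partial_x^2\sigma_1\,\tau_{h1}$ up to terms involving $\partial_x^3\sigma_1\cdot(x-x_c)E\,\partial_x\tau_{h1}$.
\end{itemize}
Since $\sigma_1=-u_x$, we have $\partial_x^2\sigma_1=-u_{xxx}$, which is the claimed leading term.

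\textbf{Remaining terms.} The $y$-fluctuation term $\sigma_1-\frac1{2h_y}\int\sigma_1\dd y$ is orthogonal to functions of $x$ alone, and $\tau_{h1}$ depends only on $x$. Hence it vanishes exactly, which is why no $u_{xyy}$-type term appears. The leftover remainders have the form $h^2\int_K(\partial_x^3\sigma_1)(x-x_c)\,\partial_x\tau_{h1}$-type expressions, i.e.\ $h^3|\sigma|_{3,K}|\tau_{h1}|_{1,K}$. An inverse inequality only bounds these by $h^2\|\bt_h\|_0$. To reach $O(h^3)$ (understood, as in \cite{lin06}, as $O(h^3)\|\bt_h\|_{\ddiv}$ or with respect to the natural norm), I would exploit $\partial_x\tau_{h1}=\ddiv\bt_h-\partial_y\tau_{h2}$. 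The $\ddiv\bt_h$ part is bounded by $h^3\|\sigma\|_3\|\ddiv\bt_h\|_0$. For the $\partial_y\tau_{h2}$ part, I would pair it with the symmetric remainder from the second component and integrate by parts across elements using the $H(\ddiv)$ normal continuity.

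\textbf{Expected obstacle.} This cancellation, making the $h^2|\bt_h|_{1,h}$ remainder genuinely $O(h^3)$ via the Bramble--Hilbert argument of \cite{lin06}, is the main difficulty. I would first try a Bramble--Hilbert lemma on the functional $\sigma\mapsto\int_K\big(\sigma_1-\sigma_{I,1}+\frac{h_x^2}{3}\partial_x^2\sigma_1\big)\partial_x\tau_{h1}(x-x_c)$. This functional vanishes on $P_2$, which gives the bound $h^3|\sigma|_{3,K}\|\tau_{h1}\|_{0,K}$ after scaling.
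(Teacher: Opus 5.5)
Your final paragraph is the right argument, and it is essentially the paper's route. The lemma itself is quoted from Lin et al., but in the proof of Theorem~\ref{thm:integralExpansion} the paper verifies that $B(\sigma,v)=(\sigma-\sigma_I,v)+\frac13\int_K\sigma_{xx}v\,\dd x\dd y$ vanishes on $P_2\times\spn\{1,x\}$ over $[-1,1]^2$. Via Bramble--Hilbert this gives $|B(\sigma,v)|\le c|\sigma|_{3,K}\|v\|_{0,K}$, and scaling yields the stated expansion with remainder $Ch^3|\bs|_3\|\bt_h\|_0$. Apply it with $v=1$ and $v=x$ together and nothing else is needed. The route before that paragraph, however, contains two errors. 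They are what produced your ``obstacle'' and your weakening of the statement to $O(h^3)\|\bt_h\|_{\ddiv}$. Note that your own Bramble--Hilbert bound already contradicts the claimed loss to $h^2\|\bt_h\|_0$.

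First, $\int_K(\sigma_1-\sigma_{I,1})\tau_{h1}=\int_K E\,\partial_x^2\sigma_1\,\tau_{h1}$ is not an identity when $\partial_x\tau_{h1}\neq0$. Write $t=x-x_c$, $a=h_{x_i}/2$ and $\tau_{h1}=g_0+g_1t$. After the $y$-fluctuation drops out (which you handled correctly), the exact statement is $\int_K(\sigma_1-\sigma_{I,1})\tau_{h1}=\int_K(g_0E+g_1F)\,\partial_x^2\sigma_1$, with $E=\frac12(t^2-a^2)$ and $F=\frac16t(t^2-a^2)=\frac13tE$.

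Second, you lose a power of $h$ in the remainder. Note that $E+\frac{a^2}{3}=F'$ and $F+\frac{a^2}{3}t=G'$ with $G=\frac1{24}t^4+\frac1{12}a^2t^2-\frac18a^4$, and both $F$ and $G$ vanish at $t=\pm a$. One more integration by parts in $x$ therefore gives exactly
\[
\int_K(\sigma_1-\sigma_{I,1})\tau_{h1}\,\dd x\dd y+\frac{a^2}{3}\int_K\partial_x^2\sigma_1\,\tau_{h1}\,\dd x\dd y=-g_0\int_K F\,\partial_x^3\sigma_1\,\dd x\dd y-g_1\int_K G\,\partial_x^3\sigma_1\,\dd x\dd y,
\]
with $|F|\le Ca^3$ and $|G|\le Ca^4$. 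The kernel multiplying $g_1=\partial_x\tau_{h1}$ is $O(h^4)$, so the inverse estimate $a|g_1|\,|K|^{1/2}\le\sqrt3\|\tau_{h1}\|_{0,K}$ only brings it down to $O(h^3)$. The $g_0$ term you worried about is also $O(h^3)$. Altogether the remainder on $K$ is at most $Ch^3\|\partial_x^3\sigma_1\|_{0,K}\|\tau_{h1}\|_{0,K}$.

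So the lemma holds in the $\|\bt_h\|_0$ sense. The splitting $\partial_x\tau_{h1}=\ddiv\bt_h-\partial_y\tau_{h2}$ with a cross-element pairing, which you leave unverified, is unnecessary. With these two corrections, your explicit computation is itself a complete proof, more elementary than Bramble--Hilbert. It also shows that only $\partial_x^3\sigma_1$ and $\partial_y^3\sigma_2$ enter the remainder.
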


\begin{lemma}\label{lem:Ritesupc}
Suppose $u\in H^4(\OO)$. Let $(R_h\bs,S_hu)$ be the Ritz projection satisfying \eqref{deigRh}, the supercloseness property is valid:
\begin{eqnarray}
\|\bs_I-R_h\bs\|_0 = O(h^2),&\label{supcs0}\\
\|\ddiv(\bs_I-R_h\bs)\|_0 = 0,&\label{supcds0}\\
\|\Pi_0u-S_hu\|_0 = O(h^2),&\label{supcu0}
\end{eqnarray}
where $\bs_I$ is the interpolation of $\bs$ defined by \eqref{intp} and $\Pi_0u$ is the $L^2$ orthogonal projection to $V_h$.
\end{lemma}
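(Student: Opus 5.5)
We argue through the error equations \eqref{erreqRh} with the interpolant inserted. Throughout, set $\bo{e}_h = \bs_I - R_h\bs\in\bo{\Sigma}_h$ and $w_h = \Pi_0 u - S_hu\in V_h$.

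\textbf{Divergence identity \eqref{supcds0}.} By \eqref{erreqintp}, $(\ddiv(\bs-\bs_I),v_h)=0$ for all $v_h\in V_h$. The second equation of \eqref{erreqRh} gives $(\ddiv(\bs-R_h\bs),v_h)=0$. Subtracting, $(\ddiv\bo{e}_h,v_h)=0$ for all $v_h\in V_h$. Since $\ddiv\bo{\Sigma}_h\subset V_h$, we may take $v_h=\ddiv\bo{e}_h$, so $\ddiv\bo{e}_h=0$. Hence $\bo{e}_h\in\bo{\Sigma}_{h0}$.

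\textbf{Flux estimate \eqref{supcs0}.} Take $\bt_h=\bo{e}_h$ in the first equation of \eqref{erreqRh}. Because $\ddiv\bo{e}_h=0$, the term with $u-S_hu$ vanishes, so $(\bs-R_h\bs,\bo{e}_h)=0$. Therefore
$$\|\bo{e}_h\|_0^2=(\bs_I-\bs,\bo{e}_h)+(\bs-R_h\bs,\bo{e}_h)=-(\bs-\bs_I,\bo{e}_h),$$
and it remains to show $|(\bs-\bs_I,\bo{e}_h)|\le Ch^2\|\bo{e}_h\|_0$. Apply Lemma~\ref{lem:integralExpansion} with $\bt_h=\bo{e}_h$; this uses $\bs=-\nabla u\in\bo{H}^3$, which follows from $u\in H^4$. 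The leading term is a sum over elements of $h_{x_i}^2\int_K u_{xxx}e_{h1}$ and $h_{y_j}^2\int_K u_{yyy}e_{h2}$. By Cauchy--Schwarz, each such sum is bounded by $Ch^2|u|_3\|\bo{e}_h\|_0$.

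\textbf{Main obstacle.} The remainder is written as $O(h^3)$, and we must check that it carries a factor $\|\bo{e}_h\|_0$ (at worst $\|\bo{e}_h\|_{\ddiv}$). For $\bo{e}_h\in\bo{\Sigma}_{h0}$ we have $\|\bo{e}_h\|_{\ddiv}=\|\bo{e}_h\|_0$, so either form suffices. If the lemma's remainder is instead stated with an $H^1_h$-type norm of $\bt_h$, we use an inverse estimate, which costs one power of $h$ and still leaves $O(h^2)\|\bo{e}_h\|_0$. Under either reading, dividing by $\|\bo{e}_h\|_0$ gives $\|\bo{e}_h\|_0=O(h^2)$. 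Alternatively, one can bound $(\bs-\bs_I,\bt_h)$ directly elementwise by the Bramble--Hilbert lemma: the functional vanishes on $P_1$, which gives an $h^2|\bs|_2\|\bt_h\|_0$ bound.

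\textbf{Scalar estimate \eqref{supcu0}.} Use the inf-sup condition: choose $\bt_h\in\bo{\Sigma}_h$ with $(\ddiv\bt_h,w_h)\ge\beta_0\|\bt_h\|_{\ddiv}\|w_h\|_0$. Since $\ddiv\bt_h\in V_h$, we have $(\ddiv\bt_h,u-\Pi_0u)=0$, so $(\ddiv\bt_h,w_h)=(\ddiv\bt_h,u-S_hu)$. By the first equation of \eqref{erreqRh} this equals $(\bs-R_h\bs,\bt_h)=(\bs-\bs_I,\bt_h)+(\bo{e}_h,\bt_h)$. The first term is $O(h^2)\|\bt_h\|_{\ddiv}$ by the same expansion argument as above, now applied to a general $\bt_h$. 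The second term is $O(h^2)\|\bt_h\|_0$ by \eqref{supcs0}. Dividing by $\beta_0\|\bt_h\|_{\ddiv}$ yields $\|w_h\|_0=O(h^2)$.
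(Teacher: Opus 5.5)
Your proof is correct and follows the paper's argument essentially step for step. You obtain the divergence identity by testing with $v_h=\ddiv(\bs_I-R_h\bs)$, reduce to the energy identity $\|\bs_I-R_h\bs\|_0^2=(\bs_I-\bs,\bs_I-R_h\bs)$ and bound it via the integral expansion of Lemma~\ref{lem:integralExpansion}, and close with the inf-sup condition for $\Pi_0u-S_hu$. Your explicit handling of how the remainder depends on $\bt_h$ makes precise a point the paper passes over; in particular, the elementwise Bramble--Hilbert bound $|(\bs-\bs_I,\bt_h)|\le Ch^2|\bs|_2\|\bt_h\|_0$ is valid because the functional vanishes on linear fields, and it would let the argument run with only $u\in H^3(\OO)$.
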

\begin{proof}
Noting \eqref{erreqintp}, we rewrite the error equation \eqref{erreqRh} as following: for all $(\bt_h,v_h)\in\bo{\Sigma}_h\times V_h$,
\begin{equation}\label{erreqRh2}
\left\{
\begin{aligned}
(\bs_I-R_h\bs,\bt_h)-(\ddiv\bt_h,\Pi_0u-S_hu) &= (\bs_I-\bs,\bt_h),\\
(\ddiv(\bs_I-R_h\bs),v_h) &= 0,
\end{aligned}
\right.
\end{equation}
and \eqref{supcds0} is obvious since we can take $v_h=\ddiv(\bs_I-R_h\bs)$ in \eqref{erreqRh2}.

Taking $(\bt_h,v_h) = (\bs_I-R_h\bs,\Pi_0u-S_hu)$ in \eqref{erreqRh2}, we have
\begin{equation}\label{posc1}
\|\bs_I-R_h\bs\|_0^2 = (\bs_I-\bs,\bs_I-R_h\bs) + (\ddiv(\bs_I-R_h\bs),\Pi_0u-S_hu)
\end{equation}
as well as
\begin{equation}\label{posc3}
(\ddiv(\bs_I-R_h\bs),\Pi_0u-S_hu) = 0.
\end{equation}
The integral expansion \eqref{explem0} gives
\begin{equation}\label{posc4}
(\bs_I-\bs,\bs_I-R_h\bs) = O(h^2)\|\bs_I-R_h\bs\|_0.
\end{equation}
Then \eqref{supcs0} follows from \eqref{posc1}-\eqref{posc4}.

From the stability conditions, we have
\begin{equation}
\begin{aligned}
\|\Pi_0u-S_hu\|_0&\leq \frac{1}{\beta_0}\sup\limits_{\bt_h\in\bo{\Sigma}_h}\frac{(\ddiv\bt_h,\Pi_0u-S_hu)}{\|\bt_h\|_{\ddiv}}\\
&\leq\frac{1}{\beta_0}\Big(\sup\limits_{\bt_h\in\bo{\Sigma}_h}\frac{(\bs_I-R_h\bs,\bt_h)}{\|\bt_h\|_{\ddiv}} + \sup\limits_{\bt_h\in\bo{\Sigma}_h}\frac{(\bs_I-\bs,\bt_h)}{\|\bt_h\|_{\ddiv}} \Big)\\
&\leq\frac{1}{\beta_0}(\|\bs_I-R_h\bs\|_0 + O(h^2)),
\end{aligned}
\end{equation}
which guarantees \eqref{supcu0}.
\end{proof}

\begin{theorem}[Supercloseness]\label{thmsupc}
Let $(\bs_h,u_h)$ be the eigenfunction of \eqref{deigmp}. Assume that the corresponding eigenspaces of \eqref{eigmp} $\bo{F}$ and $E$ satisfy $\bo{F}\times E\subset \bo{H}^4(\Omega)\times H^3(\Omega)$. Then there exists $(\bs,u)\in \bo{F}\times E$ such that
\begin{eqnarray}
\|\bs_I-\bs_h\|_0 = O(h^2),&\label{supcs}\\
\|\ddiv(\bs_I-\bs_h)\|_0 = O(h^2),&\label{supcds}\\
\|\Pi_0u-u_h\|_0 = O(h^2),&\label{supcu}
\end{eqnarray}
where $\bs_I$ is the interpolation of $\bs$ defined by \eqref{intp} and $\Pi_0u$ is the $L^2$ orthogonal projection to $V_h$.
\end{theorem}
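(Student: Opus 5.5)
The plan is to compare $(\bs_h,u_h)$ with the Ritz projection $(R_h\bs,S_hu)$ of a suitably chosen exact eigenpair, and then to use Lemma \ref{lem:Ritesupc} together with the triangle inequality. Concretely, I will choose $u\in E$ by projecting $u_h$ onto $E$ and scaling to unit norm, and set $\bs=-\nabla u\in\bo F$. Standard spectral theory then gives $\|u-u_h\|_0=O(h)$, $\|\bs-\bs_h\|_0=O(h)$ and $|\lambda-\lambda_h|=O(h^2)$. A sharper fact I will need is that the component of $u-u_h$ in $V_h$ measured against $S_hu$ is second order, i.e.\ $\|S_hu-u_h\|_0=O(h^2)$.

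The first step is to subtract \eqref{deigRh} from \eqref{deigmp}. This gives, for all $(\bt_h,v_h)$,
\begin{equation*}
\left\{
\begin{aligned}
(\bs_h-R_h\bs,\bt_h)-(\ddiv\bt_h,u_h-S_hu)&=0,\\
(\ddiv(\bs_h-R_h\bs),v_h)&=(\lambda_hu_h-\lambda u,v_h).
\end{aligned}
\right.
\end{equation*}
This is a discrete Poisson-type mixed system whose source is $\lambda_hu_h-\lambda\Pi_0u$. By the Babu\v{s}ka--Brezzi stability, $\|\bs_h-R_h\bs\|_{\ddiv}+\|u_h-S_hu\|_0\le C\|\lambda_hu_h-\lambda\Pi_0u\|_0$.

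The right-hand side splits as $\lambda(u_h-\Pi_0u)+(\lambda_h-\lambda)u_h$. The second term is $O(h^2)$ by \eqref{erreig}. For the first term I write $u_h-\Pi_0u=(u_h-S_hu)+(S_hu-\Pi_0u)$, where the last piece is $O(h^2)$ by \eqref{supcu0}. So everything reduces to showing $\|u_h-S_hu\|_0=O(h^2)$.

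This is the main obstacle, because a naive bound is circular. I will handle it with the discrete solution operator $T_h:V_h\to V_h$, $f\mapsto$ the $u$-component of the mixed Poisson solution. With this operator, $S_hu=\lambda T_h\Pi_0 u$ and $u_h=\lambda_hT_hu_h$. Expanding $\Pi_0u$ (or $S_hu$) in the discrete eigenbasis $\{u_{j,h}\}$ and using the gap $\rho_k$, the components of $S_hu-u_h$ along discrete eigenvectors with $\lambda_j\neq\lambda_k$ are controlled by $\rho_k$ times the $O(h^2)$ consistency error $\|S_hu-\lambda T_hS_hu\|_0\le C\|\Pi_0u-S_hu\|_0=O(h^2)$. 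This is the classical Babu\v{s}ka--Osborn argument.

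The component inside $E_{k,h}$ is fixed by the choice of $u$. Since $u$ is chosen so that $u_h$ is (up to normalization) the projection of $u$'s discrete counterpart into $E_{k,h}$, that component is $O(h^2)$ after normalization, because the norm defect is quadratic in the $O(h)$ gap. For multiple eigenvalues I will pick $u$ as the preimage of $u_h$ under the projection $E\to E_{k,h}$ restricted to the discrete eigenspace; this is an isomorphism for small $h$. It is here that the $H^4$ regularity of $\bo F$ enters, through the uniform constants in Lemma \ref{lem:Ritesupc}.

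Once $\|u_h-S_hu\|_0=O(h^2)$ and $\|\bs_h-R_h\bs\|_{\ddiv}=O(h^2)$ are established, the triangle inequality with \eqref{supcs0}, \eqref{supcds0} and \eqref{supcu0} yields \eqref{supcs}, \eqref{supcds} and \eqref{supcu}.
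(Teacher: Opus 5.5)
Your proposal is correct and is essentially the paper's proof. The decisive step is the same: you obtain $\|S_hu-u_h\|_0=O(h^2)$ by expanding $S_hu$ in the discrete eigenbasis, bound the components with $\lambda_j\neq\lambda_k$ through the gap $\rho_k$ and \eqref{supcu0}, and fix the $E_{k,h}$-component by taking $u$ with $P_{E_{k,h}}u=c\,u_h$, $c=1-O(h^2)$; this is exactly what the paper does. Your other description of $u$, namely $u=P_Eu_h/\|P_Eu_h\|_0$, also works, since $\|u_h-P_{E_{k,h}}P_Eu_h\|_0\le\delta(E_{k,h},E)^2$.

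The only difference is the finish. You transfer the bound to $\bs_h$ via Brezzi stability of the Ritz-versus-discrete system and the triangle inequality with \eqref{supcs0}--\eqref{supcu0}. The paper instead tests the interpolant-versus-discrete system \eqref{posce4} with $(\bs_I-\bs_h,\Pi_0u-u_h)$, reuses the integral expansion, and reads \eqref{supcds} off $\ddiv(\bs_I-\bs_h)=(\lambda-\lambda_h)\Pi_0u+\lambda_h(\Pi_0u-u_h)$. Both finishes are valid.
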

\begin{proof}
Let us consider the case that $\lambda_k$ is simple, which means that $\lambda_j\neq\lambda_k$ as long as $j\neq k$. We first prove
\begin{equation}\label{posce0}
\|S_hu_k-u_{k,h}\|_0=O(h^2).
\end{equation}
Since $\{u_{j,h}\}$ forms orthonormal basis of $V_h$, we decompose $S_hu_k$ as
$$S_h u_k = w_{k,h} + \sum_{j\neq k}(S_hu,u_{j,h})u_{j,h},$$
where $w_{k,h} = (S_hu_k,u_{k,h})u_{k,h}$. Then
\begin{equation}\label{posce1}
\|S_hu_k-u_{k,h}\|_0^2 = \|w_{k,h}-u_{k,h}\|_0^2 + \sum_{j\neq k}(S_hu_j,u_{j,h})^2.
\end{equation}
Note that $u_{k,h}$ is unique up to its sign, we choose the one such that $(S_hu_k,u_{k,h})>0$. Hence
$$\|w_{k,h}-u_{k,h}\|_0 = |1-(S_hu_k,u_{k,h})|\leq |1-(\Pi_0u_k,u_{k,h})| + \|\Pi_0u_k - S_hu_k\|_0,$$
we recall that $|1-(\Pi_0u_k,u_{k,h})| = |1-(u_k,u_{k,h})| = \frac12\|u_k-u_{k,h}\|_0^2=O(h^2)$ and \eqref{supcu0}, and then we prove
\begin{equation}\label{posce2}
\|w_{k,h}-u_{k,h}\|_0 = O(h^2).
\end{equation}

For all $j\neq k$, we have
$$\lambda_{j,h}(S_hu_k,u_{j,h}) = (S_hu_k,\ddiv\bs_{j,h}) = (R_h\bs_k,\bs_{j,h}) = (\ddiv R_h\bs_k,u_{j,h}) = \lambda_k(\Pi_0u_k,u_{j,h}),$$
and then
\begin{equation*}
(S_hu_k,u_{j,h}) = \frac{\lambda_k}{\lambda_{j,h} - \lambda_k}(\Pi_0u_k - S_hu_k,u_{j,h}).
\end{equation*}
Recalling $\frac{\lambda_k}{\lambda_{j,h} - \lambda_k}\leq\rho_k$ and \eqref{supcu0}, we have
\begin{equation}\label{posce3}
\sum_{j\neq k}(S_hu,u_{j,h})^2\leq\rho_k^2\|\Pi_0u_k-S_hu_k\|_0^2 = O(h^4).
\end{equation}
Then \eqref{posce0} follows from \eqref{posce1}-\eqref{posce3}. Combining \eqref{posce0} and \eqref{supcu0}, by applying the triangle inequality we get \eqref{supcu}.

The error equation of \eqref{deigmp} gives that for all $(\bt_h,v_h)\in\bo{\Sigma}_h\times V_h$,
\begin{equation}\label{posce4}
\left\{
\begin{aligned}
(\bs_I-\bs_h,\bt_h)-(\ddiv\bt_h,\Pi_0u-u_h) &= (\bs_I-\bs,\bt_h),\\
(\ddiv(\bs_I-\bs_h),v_h) &= \lambda(u,v_h) - \lambda_h(u_h,v_h),
\end{aligned}
\right.
\end{equation}
Taking $(\bt_h,v_h) = (\bs_I-\bs_h,\Pi_0u-u_h)$, we obtain
\begin{equation}
\begin{aligned}
\|\bs_I-\bs_h\|_0^2 =&~(\bs_I-\bs,\bs_I-\bs_h) + (\ddiv(\bs_I-\bs_h),\Pi_0u-u_h)\\
=&~(\bs_I-\bs,\bs_I-\bs_h) + \lambda(u,\Pi_0u-u_h) - \lambda_h(u_h,\Pi_0u-u_h)\\
=&~(\bs_I-\bs,\bs_I-\bs_h) + (\lambda-\lambda_h)(\Pi_0u,\Pi_0u-u_h) + \lambda_h(\Pi_0u-u_h,\Pi_0u-u_h)\\
=:&~I_1+I_2+I_3.
\end{aligned}
\end{equation}
From \eqref{erreig}, \eqref{explem0} and \eqref{supcu}, we can get
\begin{equation}
\begin{aligned}
|I_1| &= O(h^2)\|\bs_I-\bs_h\|_0\leq \frac12(\|\bs_I-\bs_h\|_0^2 + O(h^4)),\\
|I_2| &\leq |\lambda-\lambda_h|\|\Pi_0u-u_h\|_0 = O(h^4),\\
|I_3| &= \lambda_h\|\Pi_0u-u_h\|_0^2 = O(h^4).
\end{aligned}
\end{equation}
Then the proof of \eqref{supcs} is completed.

Direct computation gives
$$\ddiv(\bs_I-\bs_h) = \lambda\Pi_0u - \lambda_hu_h = (\lambda-\lambda_h)\Pi_0u + \lambda_h(\Pi_0u-u_h)$$
and then from \eqref{erreig} and \eqref{supcu}, equation \eqref{supcds} holds.

Now we consider the case that $\lambda_k$ is multiple. The proof needs some mild modifications. In the rest of the proof, when we say \enquote{eigenfunction} we refer to $u$ rather $\bs$. Recall that $E_k$ is the eigenspace of $\lambda_k$ and $E_{k,h}$ the corresponding discrete eigensapce. Denote $P_{E_{k,h}}$ the orthogonal projection onto $E_{k,h}$. The gap between $E_h$ and $E_{k,h}$ is of rate $O(h)$ implies that $P_{E_{k,h}}$ is bijective between $E_k$ and $E_{k,h}$. Then for $u_{k,h}$ we can choose the $u_k\in E_k,~\|u_k\|_0=1$ such that $P_{E_{k,h}}u_k = c u_{k,h}$ with $c = \sqrt{1-O(h^2)} = 1-O(h^2)$. Recall the index set $\Lambda_k = \{j:\lambda_j = \lambda_k\}$, and for $j\in\Lambda_k$, we have
$$(S_hu_k,u_{j,h}) = \lambda_{j,h}^{-1}(R_h\bs_k,\bs_{j,h}) = \lambda_k\lambda_{j,h}^{-1}(u_k,u_{j,h}) = c\lambda_k\lambda_{j,h}^{-1}(u_{k,h},u_{j,h}) = c\lambda_k\lambda_{j,h}^{-1}\delta_{k,j}.$$
The decomposition of $S_hu_k$ becomes
$$S_hu_k = w_{k,h} + \sum_{j\notin \Lambda_k}(S_hu_k,u_{j,h})u_{j,h}$$
with $w_{k,h} = (S_hu_k,u_{k,h})u_{k,h}$ and $\|w_{k,h} - u_{k,h}\|_0 = |1-c\lambda_k\lambda_{k,h}^{-1}| = O(h^2)$. The rest of proof is the same line, except the summation condition $j\neq k$ needs to be substituted with $j\notin\Lambda_k$.
\end{proof}

For the superconvergence of eigenfunctions $\bs_h$ and $u_h$, we utilize postprocessing skills \cite{lin06}. Assume that $\T_h$ is the uniform refined mesh of $\T_{2h}$. Let $I_{2h}^1$ and $J_{2h}^1$ be the interpolation operators defined in \cite[Example 7.14]{lin06}, i.e., $I_{2h}^1$ and $J_{2h}^1$ preserve functions in $Q_{11}$ spaces, vector-valued or scalar-valued. Some properties of the operators $I_{2h}^1$ and $J_{2h}^1$ are valid:

\begin{lemma}\label{post}
For all $\bt\in\bo{H}^2(\OO)$, $v\in H^2(\OO)$ and $(\bt_h,v_h)\in\bo{\Sigma}_h\times V_h$,
\begin{eqnarray*}
&I_{2h}^1\bt = I_{2h}\bt_I,~J_{2h}^1v = J_{2h}^1\Pi_0v,\\
&\|\bt-I_{2h}^1\bt\|_l = O(h^{2-l}),~\|v-J_{2h}^1v\|_l = O(h^{2-l}),~l = 0,1,\\
&\|I_{2h}^1\bt_h\|_l\leq C\|\bt_h\|_l,~\|J_{2h}^1v_h\|_l\leq C\|v_h\|_l.
\end{eqnarray*}
\end{lemma}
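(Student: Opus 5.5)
The three groups of properties all come from the construction of $I_{2h}^1$ and $J_{2h}^1$ in \cite[Example 7.14]{lin06}, and I would take them in turn. On each macro-element $\widetilde K\in\T_{2h}$, made of four elements of $\T_h$, $I_{2h}^1$ and $J_{2h}^1$ are the local $Q_{11}$ reconstructions. Each component of $I_{2h}^1\bt$ is fixed by the edge-integral degrees of freedom of $\bt$ on the edges of the four sub-elements: the first component by fluxes across the vertical edges, the second by fluxes across the horizontal edges. $J_{2h}^1 v$ is fixed by the four cell means $\int_{K}v\,\dd x\dd y$ over the sub-elements $K\subset\widetilde K$. Since the $Q_{11}$ space is four-dimensional in each component, the main task is to check unisolvence of these degrees of freedom on $\widetilde K$. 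I would do this by a direct computation on the reference macro-element $[-1,1]^2$ and then transfer it by the affine (tensor-product) map.

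\emph{Identities.} The operators depend only on these degrees of freedom. By \eqref{intp}, $\bt$ and $\bt_I$ have the same edge integrals on every edge of $\T_h$. By definition of $\Pi_0$, $v$ and $\Pi_0 v$ have the same cell means. Hence $I_{2h}^1\bt=I_{2h}^1\bt_I$ and $J_{2h}^1v=J_{2h}^1\Pi_0v$ follow immediately.

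\emph{Approximation.} Since the operators preserve $Q_{11}$, I would apply the Bramble--Hilbert lemma on each $\widetilde K$. This requires local boundedness of the operators in $H^2(\widetilde K)$, which comes from the trace and averaging inequalities for the functionals on the reference macro-element. The standard scaling argument then gives $|\bt-I_{2h}^1\bt|_{l,\widetilde K}\le Ch^{2-l}|\bt|_{2,\widetilde K}$ for $l=0,1$. Summing over $\widetilde K$ yields the stated $O(h^{2-l})$ bounds, and the scalar case is identical.

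\emph{Stability.} On $\bo{\Sigma}_h\times V_h$, the restriction of the operators to a macro-element is a linear map between finite-dimensional spaces on the reference element. Its norm is bounded, and scaling gives $\|I_{2h}^1\bt_h\|_{l,\widetilde K}\le C\|\bt_h\|_{l,\widetilde K}$. For $l=0$ this is a plain norm comparison. For $l=1$ the norm must be read piecewise, as $\|\cdot\|_{1,h}$, since $\bt_h$ and $v_h$ are not globally $H^1$. There I would bound the piecewise seminorm of the image through differences of neighbouring degrees of freedom, which are controlled by the piecewise norm of $\bt_h$.

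\emph{Main obstacle.} I expect the delicate step to be the unisolvence and the correct scaling for each component on $a$-regular meshes. This needs care when the four sub-elements of $\widetilde K$ have unequal sizes, where I would use the tensor-product structure to reduce to a one-dimensional interpolation problem in each direction. A secondary subtlety is the interpretation of the $l=1$ stability for discontinuous $v_h$.
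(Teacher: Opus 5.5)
The paper does not prove this lemma; it takes these properties directly from \cite[Example 7.14]{lin06}. Your reconstruction is the standard one: local degrees of freedom on macro-elements, then Bramble--Hilbert and scaling. It does deliver the two identities and the approximation bounds, provided the $l=1$ norms are read as broken over $\T_{2h}$.

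Two steps need correcting. The first is the degree-of-freedom count for $I_{2h}^1$. On a macro-element $\widetilde K$ the first component has \emph{six} flux functionals: three vertical lines, each split into two sub-edges. These cannot be unisolvent for the four-dimensional $Q_{11}$. You must either select four of them, or use all six and map onto $Q_{21}\times Q_{12}$, which still reproduces $Q_{11}$. A natural choice of four is the sub-edges on the two outer vertical lines, which gives a tensor product of two-point nodal interpolation in $x$ and two cell means in $y$. For $J_{2h}^1$ the four cell means do match $Q_{11}$. The obstacle you flagged, unequal sub-elements, does not arise: $\T_h$ is the uniform refinement of $\T_{2h}$, so the four sub-elements of $\widetilde K$ are congruent and one affine map suffices.

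The second problem is more serious: your $l=1$ stability argument fails. Differences of neighbouring degrees of freedom are jumps of $v_h$, or of the tangential component of $\bt_h$. The piecewise norm $\|\cdot\|_{1,h}$ does not see jumps at all. For a counterexample, on each macro-element $\widetilde K$ with centre $(x_c,y_c)$ let $v_h\in V_h$ equal the cell means of $x-x_c$, i.e.\ $\pm h/2$ alternating by columns. Then $|v_h|_{1,h}=0$ and $\|v_h\|_0=\frac h2|\OO|^{1/2}$. But $J_{2h}^1v_h=x-x_c$ on every $\widetilde K$, so $|J_{2h}^1v_h|_{1,h}=|\OO|^{1/2}$, and the ratio grows like $h^{-1}$.

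The analogous $\bt_h=(\chi(y),0)^T$, with $\chi$ the row means of $y-y_c$, lies in $\bo{\Sigma}_h$ and defeats the vector bound in the same way. Hence no estimate $\|J_{2h}^1v_h\|_{1,h}\le C\|v_h\|_{1,h}$ or $\|I_{2h}^1\bt_h\|_{1,h}\le C\|\bt_h\|_{1,h}$ with $C$ independent of $h$ can hold, and the $l=1$ part of the stability claim has to be replaced.

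What does hold follows from $L^2$ stability plus an inverse estimate on $\T_{2h}$:
\[
\|I_{2h}^1\bt_h\|_{1,h}\le Ch^{-1}\|\bt_h\|_0,\qquad \|J_{2h}^1v_h\|_{1,h}\le Ch^{-1}\|v_h\|_0 .
\]
This is also what the superconvergence theorem actually needs. Combined with $\|\bs_h-\bs_I\|_0=O(h^2)$ and $\|\Pi_0u-u_h\|_0=O(h^2)$, it yields the $O(h)$ bounds for $l=1$. An alternative is to measure stability in a discrete $H^1$ norm that includes $h_e^{-1}$-weighted jump terms.
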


\begin{theorem}[Superconvergence]
Suppose $(\bs,u)$ and $(\bs_h,u_h)$ are eigenfunctions of problems \eqref{eigmp} and \eqref{deigmp}, respectively. Suppose the conditions of Theorem \ref{thmsupc} and Lemma \ref{post} are valid, we have the superconvergence for eigenfunctions after postprocessing:
\begin{eqnarray*}
&\|I_{2h}^1\bs_h - \bs\|_l=O(h^{2-l}),&\\
&\|J_{2h}^1u_h - u\|_l = O(h^{2-l}),&l =0,1.
\end{eqnarray*}
\end{theorem}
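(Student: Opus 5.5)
The plan is the standard splitting of the error into an interpolation part and a discrete part:
\[
I_{2h}^1\bs_h-\bs=(I_{2h}^1\bs-\bs)+I_{2h}^1(\bs_h-\bs_I)+(I_{2h}^1\bs_I-I_{2h}^1\bs),
\]
and analogously
\[
J_{2h}^1u_h-u=(J_{2h}^1u-u)+J_{2h}^1(u_h-\Pi_0u)+(J_{2h}^1\Pi_0u-J_{2h}^1u).
\]
Here $(\bs,u)\in\bo F\times E$ is chosen as in Theorem \ref{thmsupc}, so that \eqref{supcs}--\eqref{supcu} hold for this particular pair. This choice matters for multiple eigenvalues.

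\textbf{The pure interpolation terms.} The first term in each splitting is handled directly by the approximation property in Lemma \ref{post}. It gives $\|\bs-I_{2h}^1\bs\|_l=O(h^{2-l})$ and $\|u-J_{2h}^1u\|_l=O(h^{2-l})$. The regularity $\bs\in\bo H^4$, $u\in H^3$ assumed in Theorem \ref{thmsupc} is more than enough for this.

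\textbf{The third terms.} These vanish by the identities $I_{2h}^1\bt=I_{2h}^1\bt_I$ and $J_{2h}^1v=J_{2h}^1\Pi_0v$ from Lemma \ref{post}. The reason is that the postprocessing operators only see edge fluxes and cell averages, and these are preserved by $\bs\mapsto\bs_I$ and $u\mapsto\Pi_0u$.

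\textbf{The middle (discrete) terms.} Use the stability bound of Lemma \ref{post} together with supercloseness.
\begin{itemize}
\item For $l=0$: $\|I_{2h}^1(\bs_h-\bs_I)\|_0\le C\|\bs_h-\bs_I\|_0=O(h^2)$ by \eqref{supcs}. Likewise $\|J_{2h}^1(u_h-\Pi_0u)\|_0=O(h^2)$ by \eqref{supcu}.
\item For $l=1$ the stated stability $\|I_{2h}^1\bt_h\|_1\le C\|\bt_h\|_1$ is not directly usable, because $\bt_h$ is only $\bo H(\ddiv)$-conforming and $v_h$ is piecewise constant. I would instead use an inverse estimate on the finite-dimensional range. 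Since $I_{2h}^1\bt_h$ is a piecewise $Q_{11}$ function on the quasi-uniform mesh $\T_{2h}$, we get
\[
|I_{2h}^1\bt_h|_{1,h}\le Ch^{-1}\|I_{2h}^1\bt_h\|_0\le Ch^{-1}\|\bt_h\|_0 .
\]
Applied to $\bs_h-\bs_I$, this gives $O(h^{-1}h^2)=O(h)$. The same argument works for $u_h-\Pi_0u$.
\end{itemize}

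\textbf{Assembling.} Adding the three pieces gives $O(h^{2-l})$ for $l=0,1$, in the broken $H^1$ seminorm if $I_{2h}^1$ is only piecewise defined on $\T_{2h}$.

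\textbf{Main obstacle.} The main difficulty is the $l=1$ estimate of the discrete term. It requires the inverse inequality, which needs quasi-uniformity of $\T_{2h}$, and it requires checking that Lemma \ref{post}'s stability holds in the $L^2$ form for discontinuous or $\bo H(\ddiv)$ inputs. A second point is that the stated rates are only $O(h^{2-l})$: supercloseness gains one order over \eqref{errsigma}/\eqref{erru} for $l=0$, and the interpolation error of $Q_{11}$ limits the rate anyway. So no sharper estimate of the interpolation part is needed.
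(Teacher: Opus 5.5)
Your decomposition is the paper's. The paper writes $I_{2h}^1\bs_h-\bs=I_{2h}^1(\bs_h-\bs_I)+(I_{2h}^1\bs-\bs)$ using $I_{2h}^1\bs=I_{2h}^1\bs_I$; your third term is exactly this identity. It then applies the stability and approximation properties of Lemma \ref{post} together with \eqref{supcs} and \eqref{supcu}, and the same for $J_{2h}^1$. For $l=0$ the two arguments coincide, and your remark that $(\bs,u)$ must be the particular pair furnished by Theorem \ref{thmsupc} is implicit in the paper.

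The difference is at $l=1$. There the paper simply writes $\|I_{2h}^1(\bs_h-\bs_I)\|_1\le C\|\bs_h-\bs_I\|_1$ and declares the result $O(h)$. This step is not justified, for two reasons. First, Theorem \ref{thmsupc} only controls $\|\bs_h-\bs_I\|_0$ and the divergence, not the full (broken) gradient. Second, the $l=1$ stability in Lemma \ref{post} cannot hold on $\bo\Sigma_h\times V_h$, even in a broken sense. A piecewise constant $v_h$ has zero broken gradient, while $J_{2h}^1v_h$ is a non-constant $Q_{11}$ function on every macro-element whose four cell values are not all equal. Its broken $H^1$ seminorm therefore scales like $h^{-1}\|v_h\|_0$.

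Your route uses $L^2$ stability followed by an inverse inequality on $\T_{2h}$. This gives $|I_{2h}^1(\bs_h-\bs_I)|_{1,h}\le Ch^{-1}\|\bs_h-\bs_I\|_0=O(h)$, and similarly for $J_{2h}^1(u_h-\Pi_0u)$. That is a rigorous way to get the $l=1$ estimate, and it repairs the paper's step rather than merely rephrasing it. The cost is two things the paper's statement does not list: a quasi-uniformity assumption on the mesh (so that local inverse constants scale like $h^{-1}$), and reading $\|\cdot\|_1$ as a broken norm on $\T_{2h}$. You state both explicitly.
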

\begin{proof}
The proof is nothing but direct use of Theorem \ref{thmsupc} and Lemma \ref{post}.
\begin{equation*}
\begin{aligned}
\|I_{2h}^1\bs_h - \bs\|_l &\leq \|I_{2h}^1\bs_h - I_{2h}^1\bs_I\|_l + \|I_{2h}^1\bs - \bs\|_l \\
&\leq C\|\bs_h - \bs_I\|_l + \|I_{2h}^1\bs - \bs\|_l= O(h^{2-l}),
\end{aligned}
\end{equation*}
\begin{equation*}
\begin{aligned}
\|J_{2h}^1u_h - u\|_l &\leq \|J_{2h}^1u_h - J_{2h}^1u_I\|_l + \|J_{2h}^1u - u\|_l\\
&\leq C\|u_h - u_I\|_l + \|J_{2h}^1u - u\|_l = O(h^{2-l}).
\end{aligned}
\end{equation*}
\end{proof}

\section{Error expansion for eigenvalues}
\label{sec:exp}

In this section, we shall introduce a crude error expansion \eqref{expansion} for eigenvalues of mixed elements, and then by applying basis integral methods in \cite{lin06} for our expansion, we get the error expansion \eqref{explem} for eigenvalues with optimal residual. After that, we propose a delicate expansion \eqref{result} for the error of eigenvalue by applying the superconvergence results of Theorem \ref{thmsupc}, and predict the convergence behavior of multiple eigenvalues on uniform mesh (Theorem \ref{thm:multiple}) for the problem on square domain and the convergence of extrapolation for eigenvalues (Theorem \ref{thm:extra}). As byproducts, we prove the upper bound property for discrete eigenvalues on rectangular domains (Theorem \ref{thm:upperbound}), and provide a lower bound for $\lambda_h-\lambda$ (Theorem \ref{thm:errlowerbd}).

Recall we have defined the Ritz projection $(R_h\bs,S_hu)$ in Section \ref{sec:spc}. With the help of $R_h\bs$ and $S_hu$, we can derive a crude expansion identity for $\lambda_h-\lambda$:
\begin{theorem}\label{thm:crudeExpansion}
Suppose $(\lambda_h,\bs_h,u_h)$ is an eigenpair of the discrete problem \eqref{deigmp} and $(\lambda,\bs,u)$ is the corresponding eigenpair of \eqref{eigmp} with $u\in H^3(\Omega)$. Then we have the error expansion identity \eqref{expansion} for $\lambda_h$:
\begin{equation}\label{expansion}
\lambda_h-\lambda = (\bs-\bs_I,\bs_h) + O(h^4),
\end{equation}
where $\bs_I$ is the interpolation of $\bs$ defined by \eqref{intp}.
\end{theorem}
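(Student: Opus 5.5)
We aim for an exact identity relating $\lambda_h-\lambda$ to $(\bs-\bs_h,\bs-\bs_h)$ and $(u-u_h,u-u_h)$. We then insert the interpolant $\bs_I$ and use the commuting property \eqref{erreqintp}. Normalize $\|u\|_0=\|u_h\|_0=1$ and choose the sign (or, for multiple eigenvalues, the element $u\in E$) so that the a priori estimates \eqref{erreig}--\eqref{erru} hold. Recall $(\bs,\bs)=\lambda$ and $(\bs_h,\bs_h)=\lambda_h$.

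\textbf{Step 1.} Test the continuous first equation of \eqref{eigmp} with $\bt=\bs_h\in\bo\Sigma_h\subset\bo\Sigma$. This gives $(\bs,\bs_h)=(\ddiv\bs_h,u)=\lambda_h(u_h,u)$. Hence
$$\|\bs-\bs_h\|_0^2=\lambda+\lambda_h-2\lambda_h(u,u_h)=\lambda-\lambda_h+\lambda_h\|u-u_h\|_0^2,$$
that is,
$$\lambda_h-\lambda=-\|\bs-\bs_h\|_0^2+\lambda_h\|u-u_h\|_0^2.$$
This is the standard mixed identity.

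\textbf{Step 2.} Write $\bs-\bs_h=(\bs-\bs_I)+(\bs_I-\bs_h)$, so that $-\|\bs-\bs_h\|_0^2=-(\bs-\bs_h,\bs-\bs_I)-(\bs-\bs_h,\bs_I-\bs_h)$. We want to show that $(\bs-\bs_h,\bs_I-\bs_h)$ cancels $\lambda_h\|u-u_h\|_0^2$ up to $O(h^4)$. The test function $\bt_h=\bs_I-\bs_h$ lies in $\bo\Sigma_h$, so subtracting the continuous and discrete first equations gives
$$(\bs-\bs_h,\bt_h)=(\ddiv\bt_h,u-u_h)=(\ddiv\bt_h,\Pi_0u-u_h).$$
By \eqref{erreqintp}, $\ddiv\bt_h=\lambda\Pi_0u-\lambda_hu_h$. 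Therefore
$$(\bs-\bs_h,\bs_I-\bs_h)=(\lambda\Pi_0 u-\lambda_h u_h,\Pi_0u-u_h).$$
Expanding, with $\|\Pi_0u\|_0^2=1-\|u-\Pi_0u\|_0^2$, we get
$$\lambda_h\|\Pi_0u-u_h\|_0^2+(\lambda-\lambda_h)(\Pi_0u,\Pi_0u-u_h).$$
We also have $\|u-u_h\|_0^2=\|u-\Pi_0u\|_0^2+\|\Pi_0u-u_h\|_0^2$ by orthogonality. So the difference from $\lambda_h\|u-u_h\|_0^2$ is
$$\lambda_h\|u-\Pi_0u\|_0^2-(\lambda-\lambda_h)(\Pi_0u,\Pi_0u-u_h).$$

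\textbf{Step 3 (the main obstacle).} This term $\lambda_h\|u-\Pi_0u\|_0^2$ is only $O(h^2)$, not $O(h^4)$, so it must be absorbed elsewhere. The remaining piece $-(\bs-\bs_h,\bs-\bs_I)=(\bs-\bs_I,\bs_h)-(\bs-\bs_I,\bs)$ is the natural place. So I need
$$(\bs-\bs_I,\bs)=\lambda_h\|u-\Pi_0u\|_0^2+O(h^4).$$
I would instead redo the bookkeeping directly from the identity $\lambda_h-\lambda=(\bs_h-\bs,\bs_h)+(\bs,\bs_h)-\lambda$. Using Step 1, $(\bs,\bs_h)-\lambda=\lambda_h(u,u_h)-\lambda$.

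Next, $(\bs_I,\bs_h)=(\ddiv\bs_I,u_h)=\lambda(\Pi_0u,u_h)=\lambda(u,u_h)$, using the discrete first equation with $\bt_h=\bs_I$. Also $(\bs_h,\bs_h)=\lambda_h$. Hence
$$(\bs-\bs_I,\bs_h)=(\bs,\bs_h)-\lambda(u,u_h)=(\lambda_h-\lambda)(u,u_h).$$
This is an exact identity. Therefore
$$\lambda_h-\lambda-(\bs-\bs_I,\bs_h)=(\lambda_h-\lambda)\bigl(1-(u,u_h)\bigr)=(\lambda_h-\lambda)\tfrac12\|u-u_h\|_0^2.$$

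\textbf{Conclusion.} By \eqref{erreig} and \eqref{erru}, the right-hand side is $O(h^2)\cdot O(h^2)=O(h^4)$, which proves \eqref{expansion}. The only delicate point is the choice of $u$ in the multiple-eigenvalue case, so that $\|u-u_h\|_0=O(h)$ holds for the specific $u_h$. This follows from the gap estimate $\delta(E_k,E_{k,h})=O(h)$ by taking $u$ to be the normalized projection of $u_h$ onto $E_k$, exactly as in the proof of Theorem~\ref{thmsupc}.
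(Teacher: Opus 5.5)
Your final argument, from $(\bs_I,\bs_h)=(\ddiv\bs_I,u_h)=\lambda(u,u_h)$ onward, is correct and is essentially the paper's proof. Both reach the exact identity $(\bs-\bs_I,\bs_h)=(\lambda_h-\lambda)(u,u_h)$ and conclude via $1-(u,u_h)=\tfrac12\|u-u_h\|_0^2$ together with \eqref{erreig} and \eqref{erru}; the only difference is that the paper goes through the Ritz projection $R_h\bs$ and shows $(\bs_I-R_h\bs,\bs_h)=0$, whereas you test the discrete equation with $\bs_I$ directly, and your abandoned Steps 1--2 can simply be dropped.
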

\begin{proof}
From \eqref{eigmp} and \eqref{deigmp} we have
$$(\bs,\bs_h) = (\ddiv \bs_h,u) = \lambda_h(u,u_h).$$
Similarly, from \eqref{deigmp} and \eqref{deigRh}, we have
$$(R_h\bs,\bs_h) = (\ddiv R_h\bs,u_h) = \lambda(u,u_h).$$
Put $\hat{\bs}_h = \frac{\bs_h}{(u,u_h)}$ and $\hat{u}_h = \frac{u_h}{(u,u_h)}$. From the definition of $\hat{\bs}_h$, it is easy to see $(\bs,\hat{\bs}_h) = \lambda_h$ and $(R_h\bs,\hat{\bs}_h) = \lambda$.

Then
\begin{equation}\label{exp1}
\begin{aligned}
\lambda_h-\lambda &= (\bs-R_h\bs,\hat{\bs}_h)\\
&= (\bs-R_h\bs,\bs_h)(1+\frac{1-(u,u_h)}{(u,u_h)})\\
&= (\bs-R_h\bs,\bs_h)(1+O(h^2)),
\end{aligned}
\end{equation}
where we have used \eqref{erru} and
$$1-(u,u_h)=\frac{1}{2}((u,u) + (u_h,u_h) - 2(u,u_h)) = \frac{1}{2}\|u-u_h\|_0^2.$$
Note that
\begin{equation}\label{expsp}
(\bs-R_h\bs,\bs_h) = (\bs-\bs_I,\bs_h) + (\bs_I-R_h\bs,\bs_h),
\end{equation}
and
\begin{equation}\label{vanish}
\begin{aligned}
(\bs_I-R_h\bs,\bs_h) &= (\ddiv(\bs_I-R_h\bs),u_h)\\
&= (\ddiv(\bs_I-\bs),u_h) + (\ddiv(\bs-R_h\bs),u_h)\\
&= 0,
\end{aligned}
\end{equation}
where we have used $u_h$ is piecewise constant and then \eqref{erreqintp} and \eqref{erreqRh} are valid. Combining \eqref{erreig} and \eqref{exp1}-\eqref{vanish} we get the desired result \eqref{expansion}.
\end{proof}

In order to derive a further expansion for $\lambda_h-\lambda$, it is necessary to derive an expansion for $(\bs-\bs_I,\bs_h)$. We make use of the tools introduced in \cite{lin06}. Lemma \ref{lem:integralExpansion} provides an available expansion of the integral term. However, the convergence rate of residual in \eqref{explem0} is suboptimal. Hence a bit of more work needs to do. We use the framework in \cite{lin06} to get the optimal expansion. Let $K$ be the considered bounded domain. Suppose that $B(\cdot,\cdot):H^{k+1}(K)\times S(K)\rightarrow \mathbb{R}$ is a bounded bilinear form with $B(u,v)\leq c\|u\|_{k+1,K}|v|_{j,K}$ for some integer $j$, where $S(K)$ is a finite dimensional polynomial space on $K$. Denote $P_k(K)$ the polynomial function space of degree not greater than $k$. Assume that
\begin{equation}\label{asmp1}
B(u,v) = 0,~\forall (u,v)\in P_k(K)\times S(K),
\end{equation}
and for each multi-index $\alpha$ with $|\alpha| = k+1$, we can find a differential operator $D^{\gamma}$ of order $\gamma\geq j$ such that
\begin{equation}\label{asmp2}
B(x^{\alpha},v) = \frac{1}{|K|}\int_KD^{\gamma}v\dd x\dd y,~\forall v\in S(K).
\end{equation}
Now we introduce a expansion lemma in \cite{lin06}, which can be straightforwardly proved from a generalized Bramble-Hilbert lemma.
\begin{lemma}[\cite{lin06},Lemma 2.6]\label{lem:expansionMethod}
If the bilinear form $B(u,v)$ satisfies \eqref{asmp1} and \eqref{asmp2}, then
$$B(u,v)=\sum_{|\beta|=k+1}\frac{1}{\beta!|K|}\int_KD^{\beta}uD^{\gamma}v\dd x\dd y + H(u,v),$$
where $H(u,v)$ satisfies
$$|H(u,v)| \leq c|u|_{k+2,K}|v|_{j,K},$$
with $c$ independent of $u$ and $v$.
\end{lemma}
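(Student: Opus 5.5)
The plan is to reduce the expansion to a generalized Bramble–Hilbert argument on a fixed reference element and then scale back. First I would map $K$ affinely onto a reference domain $\hat K$. Since $S(\hat K)$ is finite dimensional, the seminorm $|\cdot|_{j,\hat K}$ is equivalent to a norm on a suitable quotient space, and all constants depend only on $\hat K$ and $S$.

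Next I would introduce the remainder functional
$$H(u,v) := B(u,v) - \sum_{|\beta|=k+1}\frac{1}{\beta!|K|}\int_K D^\beta u\, D^{\gamma}v \,\dd x\dd y .$$
For fixed $v\in S(K)$ it is linear and bounded in $u$, with
$$|H(u,v)|\le c\|u\|_{k+1,K}|v|_{j,K}.$$
For the second term I would use $\gamma\ge j$ and an inverse/equivalence estimate on $S$. This estimate requires $D^\gamma v$ to be controlled by $|v|_{j,K}$. The natural way to get it is to note that $D^\gamma$ annihilates the polynomials of degree $<j$, which is implicit in \eqref{asmp2}: by \eqref{asmp1}-type considerations, the form $B$ already vanishes on those.

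The key step is to show that $H(p,v)=0$ for every $p\in P_{k+1}(K)$. I would write $p=q+\sum_{|\alpha|=k+1}c_\alpha x^\alpha$ with $q\in P_k$. Then $B(q,v)=0$ by \eqref{asmp1}, and the integral term also vanishes for $q$, since $D^\beta q=0$. For a monomial $x^\alpha$ with $|\alpha|=k+1$, we have $D^\beta x^\alpha=\alpha!\,\delta_{\alpha\beta}$. Hence the sum equals $\frac{1}{|K|}\int_K D^\gamma v$, which matches $B(x^\alpha,v)$ by \eqref{asmp2}. Here I am interpreting $D^\gamma$ as the operator attached to $\alpha$, so that the formula reads $D^\beta u\,D^{\gamma_\beta}v$.

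With this, $H(\cdot,v)$ vanishes on $P_{k+1}$. The Bramble–Hilbert lemma on $\hat K$ then gives $|H(u,v)|\le c|u|_{k+2}|v|_j$. I would apply it in its bilinear form: $H$ is bilinear, bounded by $\|u\|_{k+2}|v|_j$, and zero on $P_{k+1}\times S$.

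The main obstacle is the scaling. I would transport each term to $\hat K$ and check that the powers of $h$ from $|u|_{k+2}$, $|v|_j$ and $|K|$ match on both sides of the inequality. The hypothesis as stated is scale-dependent, so I expect to have to track it per element, as in \cite{lin06}.
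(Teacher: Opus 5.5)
Your proposal is correct and follows the route the paper indicates (it cites \cite{lin06} and says the lemma follows from a generalized Bramble--Hilbert lemma). The remainder vanishes on $P_{k+1}(K)\times S(K)$ by \eqref{asmp1}, \eqref{asmp2} and $D^\beta x^\alpha=\alpha!\,\delta_{\alpha\beta}$, and the bilinear Bramble--Hilbert lemma finishes the argument. Two simplifications: the bound $\|D^\gamma v\|_{0,K}\le c|v|_{j,K}$ on $S(K)$ comes directly from $D^\gamma$ having order $\gamma\ge j$ (so it annihilates $P_{j-1}$) plus finite dimensionality, not from \eqref{asmp1}; and no scaling is needed, since the lemma is posed on a fixed $K$ with $c$ only required to be independent of $u$ and $v$ (the paper applies it on $[-1,1]^2$ and scales afterwards in Theorem~\ref{thm:integralExpansion}).
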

By applying Lemma \ref{lem:expansionMethod}, we can prove the optimal convergence rate of the residual in Lemma \ref{lem:integralExpansion}.
\begin{theorem}\label{thm:integralExpansion}
Suppose that $\bs\in\bo{H}^4(\OO)$, then for all $\bt_h = (\tau_{h1},\tau_{h2})^T\in \bo{\Sigma}_h$,
\begin{equation}\label{explem}
(\bs-\bs_I,\bt_h) = \frac{1}{12}\sum_{K_{i,j}\in \T_h}\Big(h_{x_i}^2\int_{K_{i,j}}u_{xxx}\tau_{h1}\dd x\dd y + h_{y_j}^2\int_{K_{i,j}}u_{yyy}\tau_{h2}\dd x\dd y\Big) + O(h^4).
\end{equation}
\end{theorem}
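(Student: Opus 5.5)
The plan is to work element by element and decouple the two components. On $K=K_{i,j}$, the component $\sigma_1-\sigma_{I,1}$ depends only on $\sigma_1$. The first component of the interpolant is linear in $x$ and constant in $y$, fixed by the averages of $\sigma_1$ over the two vertical edges. Hence $(\bs-\bs_I,\bt_h)_K=B_1(\sigma_1,\tau_{h1})+B_2(\sigma_2,\tau_{h2})$, where
$$B_1(w,v)=\int_K (w-w_I)v\,\dd x\dd y,\qquad v\in S(K)=Q_{10}(K),$$
and $B_2$ is the analogue in $y$. We would map to a reference configuration, or keep the scaling explicit with the local coordinates $x=x_c+\frac{h_{x_i}}2\xi$, $y=y_c+\frac{h_{y_j}}2\eta$, and apply Lemma \ref{lem:expansionMethod} with $k=2$ to $B_1$ and $B_2$ separately. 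With $k=2$ the leading terms involve third derivatives of $\sigma_1=-u_x$, and the remainder is bounded by fourth derivatives of $\bs$; this is why $\bs\in\bo{H}^4$ is assumed.

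First, we verify \eqref{asmp1}: $B_1(w,v)=0$ for all $w\in P_2(K)$ and $v\in Q_{10}(K)$. The interpolant reproduces $Q_{10}$, so it suffices to check $w=x^2,y^2,xy$ in local variables. For $w=\xi\eta$ or $w=\eta$, the edge averages give $w_I=0$, and $\int w v$ vanishes by oddness in $\eta$ since $v$ is independent of $\eta$. For $w=\xi^2$, $w_I=1$, and $\int(\xi^2-1)(a+b\xi)$ over $[-1,1]^2$ is not zero. So $P_2$ is not in the kernel, and a finer argument is needed.

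The main obstacle is this step. I would handle it by using the full expansion from Lemma \ref{lem:integralExpansion}: the $(\sigma_1)_{xx}$ term, of order $h^2$, integrates against $\tau_{h1}$ to the known leading structure. I would then apply Lemma \ref{lem:expansionMethod} to the modified bilinear form
$$\tilde B_1(w,v)=B_1(w,v)+\frac{h_{x_i}^2}{12}\int_K w_{xx}v\,\dd x\dd y,$$
which vanishes on $P_2\times Q_{10}$; the constant is fixed by the $\xi^2$ computation above. This $\tilde B_1$ satisfies \eqref{asmp2} with cubic monomials. Next we compute $\tilde B_1(x^\alpha,v)$ for $|\alpha|=3$. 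Only $\xi^3$ and $\xi^2\eta$ can contribute against $v\in Q_{10}$, and $\xi^2\eta$ is killed by oddness in $\eta$. The $\xi^3$ term pairs only with the $b\xi$ part of $v$, so it takes the form $\frac1{|K|}\int_K D^\gamma v$ with $D^\gamma=\partial_x$ times a power $h_{x_i}^3$.

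The resulting leading term is $c\,h_{x_i}^3\int_K \partial_x^3\sigma_1\,\partial_x\tau_{h1}$, up to exact constants. This is where global summation is needed. Since $\tau_{h1}$ is continuous in $x$ across vertical edges (normal continuity), we integrate by parts in $x$ over each horizontal strip. The interior edge terms telescope, leaving $h^3$ times mesh-difference quantities. I expect these to be $O(h^4)$ only after combining the $\partial_x\tau_{h1}$ term with $\|\bt_h\|$-type bounds, using $\|\partial_x\tau_{h1}\|_{0,K}\le Ch^{-1}\|\tau_{h1}\|_{0,K}$. Honestly that gives only $O(h^3)$, so I would instead check by the explicit integral $\int_{-1}^1(\xi^3-\xi_I^3+\ldots)\xi\,\dd\xi$ whether this coefficient vanishes. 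I expect it to vanish, since the cubic interpolation error paired with the full $\xi^2$ correction is orthogonal to $\xi$ after the correction is included.

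Granting this, Lemma \ref{lem:expansionMethod} gives
$$|\tilde B_1(\sigma_1,\tau_{h1})|\le C h^4|\sigma_1|_{4,K}\|\tau_{h1}\|_{0,K}/|K|^{1/2}\cdot|K|^{1/2}.$$
Summing with Cauchy–Schwarz then yields a total of $O(h^4)\|\bs\|_4\|\bt_h\|_0$. Finally, we substitute $(\sigma_1)_{xx}=-u_{xxx}$ and $(\sigma_2)_{yy}=-u_{yyy}$. The correction terms move to the right side as $\frac{1}{12}h_{x_i}^2\int_K u_{xxx}\tau_{h1}$ and the analogous $y$ term, which gives \eqref{explem}.
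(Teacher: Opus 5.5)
\textbf{There is a genuine gap, at the cubic step.} Your setup is the paper's. On $[-1,1]^2$ your $\tilde B_1$ is exactly the paper's form $B(\sigma,v)=(\sigma-\sigma_I,v)+\frac13\int_K\sigma_{xx}v\,\dd x\dd y$, and the $P_2$ check and the appeal to Lemma~\ref{lem:expansionMethod} are the same. The failure is where you expect the cubic coefficient to vanish and then ``grant'' the local bound $|\tilde B_1(\sigma_1,\tau_{h1})|\le Ch^4|\sigma_1|_{4,K}\|\tau_{h1}\|_{0,K}$.

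The coefficient does not vanish. The interpolant of $\xi^3$ is $\xi$, so
\[
\tilde B_1(\xi^3,\xi)=\int_{[-1,1]^2}(\xi^3-\xi)\xi\,\dd\xi\,\dd\eta+\frac13\int_{[-1,1]^2}6\xi\cdot\xi\,\dd\xi\,\dd\eta=-\frac{8}{15}+\frac{8}{3}=\frac{32}{15}.
\]
The combined integrand is $\xi^4+\xi^2>0$, so no cancellation is possible. Lemma~\ref{lem:expansionMethod} plus scaling therefore yields
\[
\tilde B_1(\sigma_1,\tau_{h1})=\frac{h_{x_i}^4}{180}\int_K\partial_x^3\sigma_1\,\partial_x\tau_{h1}\,\dd x\dd y+H_K,\qquad |H_K|\le Ch^4|\sigma_1|_{4,K}\|\tau_{h1}\|_{0,K}.
\]
The power here is $h_{x_i}^4$, not $h_{x_i}^3$ as you wrote. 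The first term is only $O(h^3)\|\tau_{h1}\|_{0,K}$ locally, which is exactly why Lemma~\ref{lem:integralExpansion} stops at $O(h^3)$.

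\textbf{How the paper handles it.} The paper computes the same $32/15$ and writes this term explicitly as $\frac{1}{180}h_{x_i}^4\sum_{K}\int_K\sigma_{xxx}\partial_x\tau_{h1}\,\dd x\dd y$. It then drops it into the $O(h^4)$ remainder. To close your argument you must do the same rather than hope for a cancellation: bound the term by $Ch^4\|\bs\|_3|\bt_h|_{1,h}$, where $|\cdot|_{1,h}$ is the broken $H^1$ seminorm.

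This bound is $O(h^4)$ for the fields the theorem is later applied to:
\begin{itemize}
\item $\bs_I$;
\item the interpolant of $(u_{xxx},u_{yyy})^T$;
\item $\bs_h$, via \eqref{supcs} and an inverse estimate.
\end{itemize}
For a general $\bt_h$ controlled only in $L^2$, your inverse-estimate bound $O(h^3)$ is the honest one. So the $O(h^4)$ in the statement must be read as depending on $|\bt_h|_{1,h}$.

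The strip-wise integration by parts you sketched does not remove this dependence in general. It leaves boundary terms on $\partial\OO$, and when $h_{x_i}$ varies it leaves interface terms that do not telescope.
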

\begin{proof}
Let $K$ be the reference element $[-1,1]^2$, on which $S(K) = \spn\{1,x\}$. Let $(\sigma,\tau)^T$ be smooth on $K$. Let $(\sigma_I,\tau_I)^T$ be the interpolation defined by \eqref{intp}. Then one can verify $(\sigma_I,0)^T$ is the interpolation of $(\sigma,0)^T$ and so is $(0,\tau_I)^T$. We consider the bilinear form
$$B(\sigma,v) = (\sigma-\sigma_I,v) + \frac{1}{3}\int_K\sigma_{xx}v\dd x\dd y.$$
For $$\sigma = 1,x,y,xy,x^2,y^2,x^2y,xy^2,x^3,y^3,$$
direct computation gives $$\sigma_I = 1,x,0,0,1,\frac13,0,\frac x3,x,0,$$
respectively. Then we need to verify for each $\sigma\in P_2(K)$, $B(\sigma,[1,x]) = [0,0]$.

$B(1,[1,x]) = B(x,[1,x]) = [0,0]$ is obvious.

$B(y,[1,x]) = \int_Ky[1,x]\dd x\dd y = [0,0]$,

$B(xy,[1,x]) = \int_Kxy[1,x]\dd x\dd y = [0,0]$,

$B(x^2,[1,x]) = \int_K(x^2-1 + \frac23)[1,x]\dd x\dd y = [0,0]$,

$B(y^2,[1,x]) = \int_K(y^2-\frac13)[1,x]\dd x\dd y = [0,0]$. Yet we have yielded $B(\sigma,v)$ vanishes for all $(\sigma,v)\in P_2(K)\times S(K)$.

Further computation gives $B(x^2y,[1,x]) = \int_K(x^2y-2y)[1,x]\dd x\dd y = [0,0]$,

$B(xy^2,[1,x]) = \int_K(xy^2-\frac x3)[1,x]\dd x\dd y = [0,0]$,

$B(x^3,[1,x]) = \int_K(x^3-x+2x)[1,x]\dd x\dd y = [0,\frac{32}{15}]$,

$B(y^3,[1,x]) = \int_Ky^3[1,x]\dd x\dd y = [0,0]$.

The only term we need to deal with is $B(x^3,[1,x])$. Now for $\alpha = (3,0)$, taking $D^{\gamma}v = \frac{32}{15}v_x$, we have $B(\bo{x}^{\alpha},[1,x]) = \frac{1}{|K|}\int_KD^{\gamma}[1,x]\dd x\dd y$.

Then from Lemma \ref{lem:expansionMethod}, we have
$$B(\sigma,v)=\frac{4}{45}\int_K\sigma_{xxx}v_x\dd x\dd y + H(\sigma,v),$$
where $|H(\sigma,v)|\leq c|\sigma|_4\|v\|_0$.

By applying standard scaling argument \cite{lin06}, we have
$$(\sigma-\sigma_I,v) = -\frac{1}{12}h_{x_i}^2\sum_{K_{i,j}\in \T_h}\int_{K_{i,j}}\sigma_{xx}v\dd x\dd y + \frac{1}{180}h_{x_i}^4\sum_{K_{i,j}\in \T_h}\int_{K_{i,j}}\sigma_{xxx}v_x\dd x\dd y + O(h^4),$$
and the proof of
$$(\tau-\tau_I,v) = -\frac{1}{12}h_{y_j}^2\sum_{K_{i,j}\in \T_h}\int_{K_{i,j}}\tau_{yy}v\dd x\dd y + \frac{1}{180}h_{y_j}^4\sum_{K_{i,j}\in \T_h}\int_{K_{i,j}}\tau_{yyy}v_y\dd x\dd y + O(h^4)$$
is the same line.

Summing up the above two equations and noting $\bs = (-u_x,-u_y)^T$, we get \eqref{explem}.
\end{proof}

Now we can derive our main result of the error expansion of eigenvalues.
\begin{theorem}\label{thm:delicateExpansion}
Suppose $(\lambda_h,\bs_h,u_h)$ is an eigenpair of the discrete problem \eqref{deigmp} and $(\lambda,\bs,u)$ is the corresponding eigenpair of \eqref{eigmp} with $u\in H^5(\Omega)$. Then we have the error expansion identity \eqref{result} for $\lambda_h$:
\begin{equation}\label{result}
\lambda_h-\lambda = -\frac{1}{12}\sum_{K_{i,j}\in \T_h}\Big(h_{x_i}^2\int_{K_{i,j}}u_{xxx}u_x\dd x\dd y + h_{y_j}^2\int_{K_{i,j}}u_{yyy}u_y\dd x\dd y\Big) + O(h^4).
\end{equation}
\end{theorem}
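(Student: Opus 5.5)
Start from the crude expansion \eqref{expansion} of Theorem \ref{thm:crudeExpansion}, $\lambda_h-\lambda=(\bs-\bs_I,\bs_h)+O(h^4)$, and use the supercloseness of Theorem \ref{thmsupc} to replace $\bs_h$ by a function whose leading term is explicit. Split
$$(\bs-\bs_I,\bs_h)=(\bs-\bs_I,\bs_I)+(\bs-\bs_I,\bs_h-\bs_I).$$
For the second term, apply Theorem \ref{thm:integralExpansion} with $\bt_h=\bs_h-\bs_I\in\bo{\Sigma}_h$. The leading part is $O(h^2)\|\bs_h-\bs_I\|_0$, since $u_{xxx},u_{yyy}\in L^2$. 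By \eqref{supcs} this is $O(h^4)$, and the residual is $O(h^4)$ as well. Hence $\lambda_h-\lambda=(\bs-\bs_I,\bs_I)+O(h^4)$. Here $(\bs,u)$ is the eigenpair from Theorem \ref{thmsupc} (for multiple eigenvalues, the $u\in E_k$ matched to $u_h$). I would check that Theorem \ref{thm:crudeExpansion} holds for that choice of $u$; its proof only uses the equations and $\|u-u_h\|_0=O(h)$, so it does.

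Next apply Theorem \ref{thm:integralExpansion} with $\bt_h=\bs_I$:
$$(\bs-\bs_I,\bs_I)=\frac1{12}\sum_{K_{i,j}}\Big(h_{x_i}^2\int_{K_{i,j}}u_{xxx}\sigma_{I,1}+h_{y_j}^2\int_{K_{i,j}}u_{yyy}\sigma_{I,2}\Big)+O(h^4).$$
Then replace $\bs_I$ by $\bs=(-u_x,-u_y)^T$ inside the sums. The error is $\frac1{12}\sum h^2\int u_{xxx}(\sigma_{I,1}-\sigma_1)$. The crude bound $O(h^2)\cdot\|\bs-\bs_I\|_0=O(h^3)$ is not enough, so this is the main obstacle. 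I would handle it elementwise. On each $K$, $\sigma_{I,1}-\sigma_1$ is the error of the interpolation in \eqref{intp}, and it is orthogonal to functions independent of $x$ up to $O(h^2)$ terms. More concretely, write $u_{xxx}=\overline{u_{xxx}}^K+(u_{xxx}-\overline{u_{xxx}}^K)$. The second part gives $O(h)\cdot O(h)$ per unit measure, hence $h^2\cdot O(h^2)$ in total (this uses $u\in H^4$). For the mean part, note that $\int_K(\sigma_1-\sigma_{I,1})\,\dd x\dd y$ is itself $O(h^2)|K|^{1/2}|\sigma|_{2,K}$: by Bramble–Hilbert, the functional vanishes on $P_1$, since the reference computations in the proof of Theorem \ref{thm:integralExpansion} give $\int_K(\sigma-\sigma_I)=0$ for $\sigma=1,x,y$. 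Summing gives $h^2\cdot O(h^2)$. Alternatively, I would view $B(\sigma,1)=\int_K(\sigma-\sigma_I)$ directly in Lemma \ref{lem:expansionMethod} with $k=1$, $S=\spn\{1\}$.

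After the replacement, $\sigma_{1}=-u_x$ and $\sigma_2=-u_y$, so the sum becomes
$$-\frac1{12}\sum_{K_{i,j}}\Big(h_{x_i}^2\int_{K_{i,j}}u_{xxx}u_x+h_{y_j}^2\int_{K_{i,j}}u_{yyy}u_y\Big),$$
which is \eqref{result}. The regularity $u\in H^5$ is what Theorem \ref{thm:integralExpansion} needs ($\bs\in\bo H^4$). Collecting the $O(h^4)$ remainders completes the argument. The delicate points are the $O(h^4)$ control of $\int u_{xxx}(\sigma_{I,1}-\sigma_1)$ weighted by $h^2$, and the consistent choice of $u$ in the multiple-eigenvalue case.
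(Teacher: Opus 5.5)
Your proof is correct, and its skeleton matches the paper's: the crude expansion \eqref{expansion}, Theorem \ref{thm:integralExpansion}, supercloseness \eqref{supcs}, and an $O(h^2)$ bound for $\int_{\OO}u_{xxx}(\sigma_1-\sigma_{I,1})$. The ordering differs slightly. The paper applies Theorem \ref{thm:integralExpansion} once, with $\bt_h=\bs_h$, and then shows $\int_{\OO}u_{xxx}(\sigma-\sigma_h)=O(h^2)$ via $I_1+I_2+I_3$; its $I_3$ is your supercloseness term.

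The real difference is in the step you flag as the main obstacle. The paper replaces $u_{xxx}$ by the first component $\xi$ of the Raviart--Thomas interpolant of $(u_{xxx},u_{yyy})^T$. It bounds $\int(u_{xxx}-\xi)(\sigma-\sigma_I)$ by Cauchy--Schwarz, and bounds $\int\xi(\sigma-\sigma_I)=(\bs-\bs_I,(\xi,0)^T)$ by a second application of Theorem \ref{thm:integralExpansion}. You instead use cellwise means of $u_{xxx}$ together with the elementary Bramble--Hilbert bound $|\int_K(\sigma_1-\sigma_{I,1})|\le Ch^2|K|^{1/2}|\sigma_1|_{2,K}$. That bound needs only exactness on $P_1$, and only $u\in H^4$ for this step.

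The paper's route is shorter because it reuses its own expansion. Yours is more elementary and purely element-local, so the variable weights $h_{x_i}^2$ cause no trouble. The paper instead sets those weights aside (``if we ignore the difference between $h_{x_i}$ and $h_{y_j}$''). Making its $I_2$ step rigorous on nonuniform meshes requires the elementwise form of the expansion, because $(h_{x_i}^2\xi,0)^T$ is in general not in $\bo{\Sigma}_h$.

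You also check explicitly that Theorem \ref{thm:crudeExpansion} holds for the $u$ selected in Theorem \ref{thmsupc} when $\lambda$ is multiple. The paper leaves that point implicit.
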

\begin{proof}
We want to prove
\begin{equation}\label{dominant}
(\bs-\bs_I,\bs_h) = -\frac{1}{12}\sum_{K_{i,j}\in \T_h}\Big(h_{x_i}^2\int_{K_{i,j}}u_{xxx}u_x\dd x\dd y + h_{y_j}^2\int_{K_{i,j}}u_{yyy}u_y\dd x\dd y\Big) + O(h^4).
\end{equation}

Denote $\bs$ as $(\sigma,\tau)^T$, $\bs_h$ as $(\sigma_h,\tau_h)^T$, and the interpolation of $(u_{xxx},u_{yyy})^T$ as $(\xi,\eta)^T$. Consider the difference of \eqref{dominant} and \eqref{explem}. If we ignore the difference between $h_{x_i}$ and $h_{y_j}$, we have
\begin{equation}
\begin{aligned}
\int_{\OO}u_{xxx}(\sigma - \sigma_h)\dd x\dd y =& \int_{\OO}u_{xxx}(\sigma - \sigma_I)\dd x\dd y + \int_{\OO}u_{xxx}(\sigma_I - \sigma_h)\dd x\dd y\\
=& \int_{\OO}(u_{xxx}-\xi)(\sigma - \sigma_I)\dd x\dd y + \int_{\OO}\xi(\sigma - \sigma_I)\dd x\dd y\\
&+ \int_{\OO}u_{xxx}(\sigma_I - \sigma_h)\dd x\dd y\\
=:&~I_1+I_2+I_3.
\end{aligned}
\end{equation}

Note that \eqref{errintp}, the Cauchy-Schwarz inequality gives $I_1 = O(h^2)$. $I_2=O(h^2)$ follows from Theorem \ref{thm:integralExpansion}. The superconvergence \eqref{supcs} gives $I_3 = O(h^2)$. Thus we have
\begin{equation}\label{sigma}
h^2\int_{\OO}u_{xxx}(\sigma - \sigma_h)\dd x\dd y = O(h^4).
\end{equation}
The proof of
\begin{equation}\label{tau}
h^2\int_{\OO}u_{yyy}(\tau - \tau_h)\dd x\dd y = O(h^4)
\end{equation}
is the same line.

Combining \eqref{explem}, \eqref{sigma}, and \eqref{tau} we obtain \eqref{dominant}, from which \eqref{result} follows.
\end{proof}

Now let us consider the case where $\Omega$ is a rectangle, on which the eigenfunctions are tensor product of sine waves. We can rewrite \eqref{result} as
\begin{equation}\label{result2}
\lambda_h-\lambda = \frac{1}{12}\sum_{K_{i,j}\in \T_h}\Big(h_{x_i}^2\int_{K_{i,j}}u_{xx}^2\dd x\dd y + h_{y_j}^2\int_{K_{i,j}}u_{yy}^2\dd x\dd y\Big) + O(h^4)
\end{equation}
after integral by part.

Noting that the coefficient of the $h^2$ term in \eqref{result2} is always positive, we have Theorem \ref{thm:upperbound}.
\begin{theorem}\label{thm:upperbound}
For the lowest-order RRT element on rectangular domains, the computed eigenvalues provides asymptotic upper bounds for exact ones on general rectangular meshes, i.e. $\lambda_h\geq\lambda$ for $h$ small enough.
\end{theorem}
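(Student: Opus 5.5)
The plan is to read the sign of $\lambda_h-\lambda$ off the expansion \eqref{result2} and show that its leading $h^2$ term is bounded below by a positive multiple of $h_{\min}^2$, so that it dominates the $O(h^4)$ remainder.

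\textbf{Step 1: derive \eqref{result2} carefully.} On a rectangle $\Omega=(0,a)\times(0,b)$, every eigenfunction in $E$ is a finite combination of products $\sin(m\pi x/a)\sin(n\pi y/b)$ sharing the same value $\lambda=(m\pi/a)^2+(n\pi/b)^2$; in particular $u\in C^\infty(\overline\Omega)$, so Theorem \ref{thm:delicateExpansion} applies. A rectangular mesh is a tensor grid, so the $x$-term of \eqref{result} can be written as $\sum_i h_{x_i}^2\int_{x_{i-1}}^{x_i}\int_0^b u_{xxx}u_x\,\dd y\,\dd x$. I would handle it by integrating by parts in $x$ on each column strip $[x_{i-1},x_i]\times[0,b]$:
\[
\int u_{xxx}u_x=\Big[\int_0^b u_{xx}u_x\,\dd y\Big]_{x_{i-1}}^{x_i}-\int u_{xx}^2 .
\]
The boundary terms do not telescope because of the weights $h_{x_i}^2$, so I must show they vanish or are small. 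For a single product mode, $u_{xx}=-(m\pi/a)^2u$. Hence
\[
\int_0^b u_{xx}u_x\,\dd y=-\tfrac12(m\pi/a)^2\,\partial_x\!\int_0^b u^2\,\dd y .
\]
For a combination of modes with different $n$, orthogonality in $y$ decouples the cross terms. The problem is that for multiple eigenvalues, different $(m,n)$ with the same $\lambda$ necessarily have different $n$, and the cross terms in $y$ then vanish. So $\int_0^b u_{xx}u_x\,\dd y=-\tfrac12\sum_n (m_n\pi/a)^2 c_n^2\,\partial_x \sin^2(m_n\pi x/a)\cdot(b/2)$, which is generally nonzero at interior nodes.

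I therefore expect the interior boundary terms to have size $h^2\cdot O(1)$ per node with alternating contributions. Combining nodes $i$ and $i+1$, the jump in the weights is $h_{x_{i+1}}^2-h_{x_i}^2$. On a general mesh this is not small, so the boundary sum could be $O(h^2)$ overall. I am treating this as a genuine worry, and this is the main obstacle. The fallback is to avoid integration by parts altogether and argue positivity directly from the leading term of \eqref{result} elementwise.

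\textbf{Step 2: the elementwise fallback.} Write $g=u_x$, so the $x$-term on $K_{i,j}$ is $-\int_{K} g_{xx}g$. Expanding $g$ about the element center, $-\int_K g_{xx}g=\int_K g_x^2+O(h)|K|$ locally. This relies on the mean-value identity $-\int_{x_{i-1}}^{x_i} g''g=\int g'^2-[g'g]$, where the boundary bracket equals $h_{x_i}(g'g)'(\xi)$. Hence the weighted boundary sum is $\sum_i h_{x_i}^3\cdot O(1)=O(h^2)\cdot O(h)$ in total. This is exactly why it is negligible: the per-node jump estimate in Step 1 was too pessimistic. Multiplied by the outer factor $h^2$, it gives $O(h^3)$ relative error, which is acceptable as a lower-order perturbation.

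\textbf{Step 3: conclude.} This gives
\[
\lambda_h-\lambda=\tfrac1{12}\sum_{K}\Big(h_{x_i}^2\int_K u_{xx}^2+h_{y_j}^2\int_K u_{yy}^2\Big)+O(h^3).
\]
On any rectangular mesh with $h_{\min}\gtrsim h$ (the standard regularity assumed throughout), the main sum is at least $\tfrac1{12}h_{\min}^2(\|u_{xx}\|_0^2+\|u_{yy}\|_0^2)$. This norm is positive and independent of $h$, since $u\neq0$ vanishes on $\partial\Omega$ and so cannot be affine in both variables. The right-hand side is therefore $\ge ch^2-Ch^3>0$ for small $h$, which gives $\lambda_h\ge\lambda$.
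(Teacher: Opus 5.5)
\textbf{Gap in Step 2.} The gap is in Step 2, and your Step 1 worry was correct. The mean-value identity gives, on a column $[x_{i-1},x_i]\times[0,b]$, a bracket $B_i=\int_0^b[u_{xx}u_x]_{x_{i-1}}^{x_i}\,\dd y=O(h_{x_i})$. On a single element the bracket is $O(|K|)$, not $O(h)|K|$. The weighted sum over the roughly $h^{-1}$ columns is therefore $\sum_i h_{x_i}^2B_i=O\big(\sum_i h_{x_i}^3\big)=O(h^2)$, which is the same order as the leading term, not $O(h^3)$: you dropped the number of columns.

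Summation by parts, using $u_{xx}=0$ at $x=0,a$, shows that the leading terms of \eqref{result} and \eqref{result2} differ by exactly
\[
\frac1{12}\sum_{i=1}^{n_1-1}\big(h_{x_{i+1}}^2-h_{x_i}^2\big)G(x_i),\qquad G(x)=\int_0^b u_{xx}u_x\,\dd y,
\]
plus the analogous $y$-sum. This is genuinely of order $h^2$ whenever the step size jumps at a node where $G\neq0$. For example, take $u=\frac2\pi\sin x\sin y$ on $[0,\pi]^2$, a uniform $y$-grid, and $x$-steps $h$ on $[0,\pi/4]$ and $2h$ on $[\pi/4,\pi]$; the difference equals $-h^2/(4\pi)$. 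So the expansion in your Step 3 with an $O(h^3)$ remainder is false on general meshes, and positivity cannot be read off from it. This is exactly the point the paper's one-line passage from \eqref{result} to \eqref{result2} ``after integral by part'' passes over: the interface terms telescope away only when $h_{x_i}$ and $h_{y_j}$ are constant.

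\textbf{How to repair it.} The theorem can still be reached, and more simply. Do not integrate by parts elementwise; read the sign directly from \eqref{result}. Because the mesh is a tensor grid, the $x$-part of its leading term is $\frac1{12}\sum_i h_{x_i}^2\int_{x_{i-1}}^{x_i}\big(\int_0^b(-u_{xxx}u_x)\,\dd y\big)\dd x$. The same $y$-orthogonality you used in Step 1 gives, for $u=\sum_k c_k\sin(\alpha_kx)\sin(\beta_ky)$ with distinct $\beta_k$,
\[
\int_0^b(-u_{xxx}u_x)\,\dd y=\frac b2\sum_k c_k^2\alpha_k^4\cos^2(\alpha_kx)\ \geq\ 0 .
\]
Thus each column contributes nonnegatively. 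The $x$-part is then at least $\frac{h_{\min}^2}{12}\int_\Omega(-u_{xxx}u_x)\,\dd x\,\dd y=\frac{h_{\min}^2}{12}\|u_{xx}\|_0^2$, by one global integration by parts whose boundary term vanishes since $u_{xx}=0$ on $x\in\{0,a\}$.

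Apply the same bound in $y$ and use $\|u_{xx}\|_0^2+\|u_{yy}\|_0^2\ge\frac12\|\Delta u\|_0^2$. You get $\lambda_h-\lambda\ge\frac{h_{\min}^2\lambda^2}{24}+O(h^4)$. This bound is uniform over the unit sphere of the eigenspace, which also handles the $h$-dependence of $u$ for multiple eigenvalues that your ``independent of $h$'' remark glosses over. It beats the remainder once $h_{\min}\gtrsim h$. That assumption is genuinely needed, and on a tensor grid it follows from $a$-regularity.
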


When $\Omega$ is a rectangle, we can also propose a lower bound for error of eigenvalues. 
\begin{theorem}\label{thm:errlowerbd}
Assume $\Omega$ is a rectangle and $\T_h$ is $a$-regular, then
\begin{equation}
\lambda_h-\lambda \geq C\lambda^2h^2 + O(h^4)
\end{equation}
if the corresponding eigenfunction $u\in H^5(\Omega)$, where $C=\frac{h^2}{12a^2}$ independent of $h$ and $\lambda$.
\end{theorem}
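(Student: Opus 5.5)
Starting from the expansion \eqref{result2}, valid on rectangles for $u\in H^5(\Omega)$, I would bound the $h^2$ coefficient from below. On each $K_{i,j}$, $a$-regularity gives $h_{x_i}\ge a^{-1}h_{y_j}$ and $h_{y_j}\ge a^{-1}h_{x_i}$. Together with $\max\{h_{x_i},h_{y_j}\}$ over the mesh equal to $h$, this yields a uniform lower bound $\min\{h_{x_i},h_{y_j}\}\ge c_0 h$ for every element. For a general non-uniform mesh this last step is not automatic: one first needs to relate all element sizes to the global $h$. I would read the constant $C$ in the statement as absorbing this comparison, i.e.\ I would interpret $C$ via the local mesh sizes, with $h_{x_i}^2,h_{y_j}^2\ge h_{\min}^2$ and $h_{\min}\ge a^{-1}\cdot(\text{local size})$. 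I therefore expect the usable form to be
\[
\lambda_h-\lambda\ \ge\ \frac{h_{\min}^2}{12}\int_\Omega (u_{xx}^2+u_{yy}^2)\,\dd x\dd y+O(h^4),
\]
with $h_{\min}\gtrsim h/a$ coming from regularity, where the mesh is assumed comparable in size across elements.

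The second step evaluates $\int_\Omega(u_{xx}^2+u_{yy}^2)$ in terms of $\lambda$. On the rectangle $[0,L_1]\times[0,L_2]$ the eigenspace consists of combinations of $\sin(m\pi x/L_1)\sin(n\pi y/L_2)$ with $\lambda=\mu_x+\mu_y$, where $\mu_x=(m\pi/L_1)^2$ and $\mu_y=(n\pi/L_2)^2$. For a single mode we have $\|u_{xx}\|_0^2=\mu_x^2$ and $\|u_{yy}\|_0^2=\mu_y^2$, so
\[
\|u_{xx}\|_0^2+\|u_{yy}\|_0^2=\mu_x^2+\mu_y^2\ge \tfrac12(\mu_x+\mu_y)^2=\tfrac12\lambda^2 .
\]
For a multiple eigenvalue, $u$ is a combination of orthogonal modes. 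Both $u_{xx}$ and $u_{yy}$ remain orthogonal combinations, because each mode is an eigenfunction of $\partial_{xx}$ and $\partial_{yy}$ and distinct modes are $L^2$-orthogonal. The same inequality therefore holds by convexity, summing the per-mode bound with weights $c_\ell^2$, $\sum c_\ell^2=1$. A slicker route avoids mode-by-mode work: integrating by parts twice with the boundary conditions ($u=0$, so $u_{xx}=0$ on the $x$-edges and likewise $u_{yy}=0$ on the $y$-edges) gives $\int u_{xx}u_{yy}=\int u_{xy}^2\ge0$. Hence $\lambda^2=\|\Delta u\|_0^2=\|u_{xx}\|^2+\|u_{yy}\|^2+2\|u_{xy}\|^2$. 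Combining this with $\|u_{xy}\|^2\le \tfrac12(\|u_{xx}\|^2+\|u_{yy}\|^2)$ again gives $\ge\lambda^2/2$, and I would use whichever presentation is cleaner.

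Combining the two steps gives $\lambda_h-\lambda\ge C\lambda^2h^2+O(h^4)$ with $C$ proportional to $1/(24a^2)$ (or $1/(12a^2)$ up to the bookkeeping of the factor $\tfrac12$). The $O(h^4)$ remainder is inherited from Theorem \ref{thm:delicateExpansion}.

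The main obstacle is the first step: extracting a global $h^2$ from the elementwise weights $h_{x_i}^2,h_{y_j}^2$. $a$-regularity only compares the two sides of one element, not different elements. I would handle this by stating the bound with the local minimal mesh size, or by additionally assuming quasi-uniformity, and then matching the stated constant.
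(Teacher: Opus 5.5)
The gap is in your first step, the one you flag yourself. You end up proving a weaker statement, with $h_{\min}$ in place of $h$ or under an extra quasi-uniformity hypothesis, because you conclude that $a$-regularity only compares the two sides of a single element. Your first instinct was correct, though. Since $\Omega$ is a rectangle, $\T_h$ is the full tensor-product grid generated by $X$ and $Y$. So every $K_{i,j}$ with $1\le i\le n_1$, $1\le j\le n_2$ belongs to $\T_h$, and $a$-regularity gives $a^{-1}h_{y_j}\le h_{x_i}\le a h_{y_j}$ for \emph{every} pair $(i,j)$. Equivalently, $h_{x_i}\ge a^{-1}\max_j h_{y_j}$ and $h_{y_j}\ge a^{-1}\max_i h_{x_i}$. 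If $h=\max_j h_{y_j}$, then every $h_{x_i}\ge h/a$, hence $\max_i h_{x_i}\ge h/a$ and every $h_{y_j}\ge h/a^2$; the other case is symmetric. Thus $\min_{i,j}\{h_{x_i},h_{y_j}\}\ge a^{-2}h$ with no additional assumption, and \eqref{result2} together with your second step gives
\[
\lambda_h-\lambda\ \ge\ \frac{h^2}{12a^4}\big(\|u_{xx}\|_0^2+\|u_{yy}\|_0^2\big)+O(h^4)\ \ge\ \frac{\lambda^2h^2}{24a^4}+O(h^4).
\]
Your second step is correct, but the paper does it in one line. It uses the pointwise inequality $u_{xx}^2+u_{yy}^2\ge\frac12(u_{xx}+u_{yy})^2$ together with $\|\Delta u\|_0=\lambda\|u\|_0=\lambda$. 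Neither the mode expansion nor the identity $\int_\Omega u_{xx}u_{yy}=\int_\Omega u_{xy}^2$ is needed.

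On ``matching the stated constant'': $C=\frac{h^2}{12a^2}$ in the statement is evidently a typo, since it depends on $h$. The paper's proof passes directly to $\frac{h^2}{12a^2}\int_\Omega(u_{xx}^2+u_{yy}^2)\,\dd x\dd y$ and ends with $C=\frac{1}{24a^2}$, which means it takes $h_{x_i},h_{y_j}\ge h/a$ for granted. That bound on the mesh sizes is not implied by $a$-regularity. For example, $h_{x_i}\equiv h/2$ with $h_{y_j}\in\{h,h/4\}$ is $2$-regular, yet it has $h_{y_j}=h/a^2$. So the argument yields $C=\frac{1}{24a^4}$. That constant still has the property the theorem needs, namely independence of $h$ and $\lambda$, so do not try to recover the factor $a^{-2}$.
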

\begin{proof}
From \eqref{result2} we have
\begin{multline*}
\lambda_h-\lambda\geq\frac{h^2}{12a^2}\int_{\Omega}u_{xx}^2+u_{yy}^2\dd x\dd y + O(h^4)\geq\frac{h^2}{24a^2}\int_{\Omega}(u_{xx}+u_{yy})^2\dd x\dd y + O(h^4)\\
= \frac{h^2}{24a^2} \|\Delta u\|_0^2 + O(h^4) = C\lambda^2h^2 + O(h^4).
\end{multline*}
\end{proof}

For simple eigenvalues, it is sufficient to provide a explicit expansion by using \eqref{result2}. However, things go difficult if $\lambda$ is multiple: we cannot determine which function in the eigenspace of problem \eqref{eigmp} is the corresponding eigenfunction to the discrete one. We shall be careful to cope with things in this case. For the settings of Theorem \ref{thm:multiple} and Remark \ref{rmk:multiple}, we put $\OO = [0,\pi]^2$.

Put $u_{m,n}(x,y) = \frac{2}{\pi}\sin(mx)\sin(ny)$ for some $m,n\in\mathbb{N}^*$. For the considered domain $\OO = [0,\pi]^2$, theoretical results give that the exact eigenvalues are in the form $m^2+n^2$ and the corresponding eigenfunctions lie on the space spanned by $u_{m_*,n_*}(x,y)$ and $u_{n_*,m_*}(x,y)$ for all possible positive integral decomposition $m_*^2+n_*^2 = m^2+n^2$. Note that if for a simple eigenvalue $m^2+n^2$, $m$ is equal to $n$, but the converse does not hold, since we can easily find a counterexample $m = n =5$ with $m_*=1, n_*=7$. Suppose we have all decomposition pairs $\{(m_i,n_i)\}_{i=1}^p$ with $m_i^2+n_i^2 = \lambda_k$, where each pair $(m_i,n_i)$ is called a \enquote{frequency} and we do not distinguish the order of $m_i$ and $n_i$.

If $\T_h$ is uniform, i.e. $h_{x_i} \equiv h_x = h = h_y \equiv h_{y_j}$, \eqref{result2} reads 
\begin{equation}\label{corexp1}
\lambda_h-\lambda = \frac{h^2}{12}\int_{\OO}u_{xx}^2+u_{yy}^2\dd x\dd y + O(h^4).
\end{equation}
Furthermore, a delicate expansion \eqref{corexp2} is valid for both simple and multiple eigenvalues. However, the proof of \eqref{corexp2} is quite cumbersome, so we put it in the appendix.

\begin{theorem}\label{thm:multiple}
If $\T_h$ is uniform, then for $\lambda_k = m^2+n^2$, there exists an eigenvalue $\lambda_{h}^{m,n}$ of problem \eqref{deigmp} such that
\begin{equation}\label{corexp2}
\lambda_{h}^{m,n}-\lambda_k = \frac{(m^4+n^4)h^2}{12} + O(h^4),
\end{equation}
\end{theorem}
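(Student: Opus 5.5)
We work with the whole cluster of discrete eigenvalues $\{\lambda_{j,h}\}_{j\in\Lambda_k}$ at once, instead of trying to identify a single continuous eigenfunction. The goal is to show that $\frac{12}{h^2}(\lambda_{j,h}-\lambda_k)$, $j\in\Lambda_k$, agree to $O(h^2)$ with the eigenvalues of an explicit quadratic form on $E_k$, and then to diagonalize that form.

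\emph{The quadratic form.} On a uniform mesh, \eqref{corexp1} suggests the symmetric bilinear form on $E_k$
$$B(v,w)=\int_\OO v_{xx}w_{xx}+v_{yy}w_{yy}\,\dd x\dd y .$$
Since $\partial_{xx}u_{m,n}=-m^2u_{m,n}$ and $\partial_{yy}u_{m,n}=-n^2u_{m,n}$, and the functions $u_{m_i,n_i},u_{n_i,m_i}$ are $L^2$-orthonormal, $B$ is diagonal in this basis. Its eigenvalues are $m_i^4+n_i^4$, each repeated according to the multiplicity of the corresponding basis functions. So it suffices to show that the numbers $\frac{12}{h^2}(\lambda_{j,h}-\lambda_k)$, $j\in\Lambda_k$, are the eigenvalues of $B|_{E_k}$ up to $O(h^2)$. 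The pair $(m,n)$ is then among the eigenvalues of $B$, which gives \eqref{corexp2}.

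\emph{A polarized crude expansion.} Take any $u_l\in E_k$, with $\bs_l=-\nabla u_l$, and a discrete eigenfunction $u_{j,h}$ with $j\in\Lambda_k$. Exactly as in the proof of Theorem~\ref{thm:crudeExpansion},
$$(\bs_l,\bs_{j,h})=\lambda_{j,h}(u_l,u_{j,h}),\qquad (R_h\bs_l,\bs_{j,h})=\lambda_k(u_l,u_{j,h}).$$
Moreover, by \eqref{vanish}, $(\bs_l-R_h\bs_l,\bs_{j,h})=(\bs_l-\bs_{l,I},\bs_{j,h})$. This gives
$$(\lambda_{j,h}-\lambda_k)(u_l,u_{j,h})=(\bs_l-\bs_{l,I},\bs_{j,h}).$$
For $u_j\in E_k$ attached to $u_{j,h}$ by Theorem~\ref{thmsupc}, the argument of Theorem~\ref{thm:delicateExpansion}, applied with $u_l$ in the first slot and $\bs_{j,h}$ in the second, then gives
$$(\bs_l-\bs_{l,I},\bs_{j,h})=\frac{h^2}{12}B(u_l,u_j)+O(h^4).$$
Here we use \eqref{explem}, the supercloseness \eqref{supcs} for $\bs_{j,h}$ against $\bs_{j,I}$, and integration by parts on the square, as in \eqref{result2}.

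\emph{Near-orthonormality and conclusion.} Choose the $u_j$, $j\in\Lambda_k$, as in the multiple-eigenvalue part of the proof of Theorem~\ref{thmsupc}. Since $u_{l,h}$ is piecewise constant, \eqref{supcu} gives
$$(u_l,u_{j,h})=(\Pi_0u_l,u_{j,h})=(u_{l,h},u_{j,h})+O(h^2)=\delta_{lj}+O(h^2).$$
In particular the $u_j$ are orthonormal up to $O(h^2)$, and Gram--Schmidt moves them by $O(h^2)$. Because this change is multiplied by $h^2$, it affects $\frac{h^2}{12}B(u_l,u_j)$ only at $O(h^4)$. Also $\lambda_{j,h}-\lambda_k=O(h^2)$ by \eqref{erreig}. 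Combining these, the matrix of $\frac{h^2}{12}B$ in an orthonormal basis of $E_k$ equals $\operatorname{diag}(\lambda_{j,h}-\lambda_k)_{j\in\Lambda_k}+O(h^4)$. Weyl's perturbation inequality for symmetric matrices then matches the eigenvalues up to $O(h^4)$, which proves the claim.

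\emph{Main obstacle.} The delicate step is the polarized expansion. We must check that the $I_1,I_2,I_3$ argument of Theorem~\ref{thm:delicateExpansion} really produces the off-diagonal value $B(u_l,u_j)$ with an $O(h^4)$ remainder. This requires the supercloseness partner $u_j$ of $u_{j,h}$ to be fixed consistently across the whole cluster, which is why the $u_j$ must be chosen through $P_{E_{k,h}}$ as in Theorem~\ref{thmsupc}. If this consistency fails, the fallback is to apply \eqref{supcs} to each $\bs_{j,h}$ individually, which still suffices because only the second slot involves discrete functions.
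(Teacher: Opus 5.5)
Your proof is correct, and it takes a genuinely different route from the paper's.

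\textbf{The paper's route.} The paper passes to the $\bo{H}(\ddiv)$ formulation \eqref{deigmpdiv} and argues with Rayleigh quotients and the min--max principle (Lemma~\ref{min-max}). It orders the frequencies by $m_i^4+n_i^4$. It bounds the lowest discrete eigenvalue of the cluster from below via \eqref{result2}, applied to a partner $u=\sum_i\alpha_iu^i$. It bounds it from above with a test function built from the Raviart--Thomas interpolant of an eigenfunction in $E_k^1$, with the lower discrete eigenspaces projected out. From this it concludes that the corresponding discrete eigenfunctions lie within $O(h)$ of $E_k^1$. It then inducts over the frequencies (Lemmas~\ref{space}, \ref{lem:Spaceconv}, \ref{lem:Spaceconv2}) before applying \eqref{result2} once more.

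\textbf{Your route.} You polarize the exact identity underlying Theorem~\ref{thm:crudeExpansion}, $(\lambda_{j,h}-\lambda_k)(u_l,u_{j,h})=(\bs_l-\bs_{l,I},\bs_{j,h})$, which holds for every $u_l\in E_k$ and $j\in\Lambda_k$. You run the $I_1,I_2,I_3$ argument on the cross terms. From this you read off that, in a near-orthonormal basis of $E_k$, the matrix of $\frac{h^2}{12}B$ is $\operatorname{diag}(\lambda_{j,h}-\lambda_k)+O(h^4)$, and Weyl's inequality finishes.

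\textbf{What each buys.} Your argument bypasses the min--max machinery, the ordering of frequencies, the induction, and the strict-inequality argument of Step~B. It gives in one stroke the full matching, multiplicities included, between $\{\lambda_{j,h}\}_{j\in\Lambda_k}$ and $\{\lambda_k+\frac{h^2}{12}(m_i^4+n_i^4)\}$. The paper's route produces along the way the frequency separation of discrete eigenspaces stated in Remark~\ref{rmk:multiple}. Your identity yields this too. Each orthonormalized $\tilde u_j$ is an approximate eigenvector of $B|_{E_k}$ with residual $O(h^2)$, and distinct values of $m_i^4+n_i^4$ differ by at least $1$. A Davis--Kahan argument then puts $u_{j,h}$ within $O(h)$ of the matching $E_k^i$.

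\textbf{Two small remarks.}
\begin{enumerate}
\item In the near-orthonormality step, the first equality uses that $u_{j,h}$, not $u_{l,h}$, is piecewise constant.
\item The consistency issue you flag is not a real obstacle. All that is needed is to put in the first slot the same partner $u_l$ that Theorem~\ref{thmsupc} attaches to $u_{l,h}$. With the paper's choice $P_{E_{k,h}}u_l=c_lu_{l,h}$ one even has $(u_l,u_{j,h})=c_l\delta_{lj}$ exactly. All $O(\cdot)$ constants are uniform because $E_k$ is a fixed finite-dimensional space of smooth functions.
\end{enumerate}
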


\begin{remark}\label{rmk:multiple}
For the problem on square domain, Theorem \ref{thm:multiple} implies that multiple eigenvalues of the same \enquote{frequency} share the same dominant term in error expansions when $\T_h$ is uniform. We will see this phenomenon in Section \ref{sec:num}, for example, $\lambda_{2,h}\approx\lambda_{3,h}$ and $\lambda_{5,h}\approx\lambda_{6,h}$ even in machine error. Moreover, the eigenfunctions tend to distinguish from each other by their \enquote{frequencies}: for each $(m,n)$ pair there exists an eigenspace in $\bo{\Sigma}_h$ close to $\spn\{u_{m,n},u_{n,m}\}$ with gap $O(h)$.
\end{remark}

Theorems \ref{thm:integralExpansion} and \ref{thm:delicateExpansion} indicate that we can improve the accuracy of $\lambda_h$ by extrapolation. Let $\tilde\lambda_h = \frac43\lambda_{h/2}-\frac13\lambda_h$, where $\lambda_{h/2}$ is the corresponding eigenvalue computed on the uniform refined mesh $\T_{h/2}$ of $\T_h$.
\begin{theorem}\label{thm:extra}
If $\lambda$ is simple with corresponding eigenfunction $u\in H^5(\Omega)$, or $\Omega$ is square with uniform partition $\T_h$, then
\begin{equation}\label{extra}
|\tilde\lambda_h-\lambda| = O(h^4).
\end{equation}
\end{theorem}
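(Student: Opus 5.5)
The plan is to write the $h^2$-coefficient of the expansion as a mesh-scaled quantity $C(u)h^2$ and to check that $C(u)$ is the same on $\T_h$ and $\T_{h/2}$. Then $\tilde\lambda_h$ eliminates the $h^2$ term exactly, and only the $O(h^4)$ residuals remain.

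\emph{Simple eigenvalue.} Start from \eqref{result} of Theorem \ref{thm:delicateExpansion} on $\T_h$. Since $\T_{h/2}$ is the uniform refinement of $\T_h$, each $K_{i,j}$ splits into four children with $h_{x}$ and $h_{y}$ halved. Write
\[
A_h(u):=\sum_{K_{i,j}\in\T_h}\Big(h_{x_i}^2\int_{K_{i,j}}u_{xxx}u_x\,\dd x\dd y+h_{y_j}^2\int_{K_{i,j}}u_{yyy}u_y\,\dd x\dd y\Big).
\]
Because the integrands are fixed functions and the children tile the parent, we get $A_{h/2}(u)=\frac14A_h(u)$ exactly. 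Since $\lambda$ is simple, the discrete eigenfunctions on both meshes approximate the same $u$, up to sign, and $A_h(u)$ is quadratic in $u$, so the sign is harmless. Hence
\[
\lambda_h-\lambda=-\tfrac1{12}A_h(u)+O(h^4),\qquad \lambda_{h/2}-\lambda=-\tfrac1{48}A_h(u)+O(h^4).
\]
Forming $\frac43(\lambda_{h/2}-\lambda)-\frac13(\lambda_h-\lambda)$ cancels the $A_h$ terms and gives \eqref{extra}.

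\emph{Square domain, uniform mesh.} Here $\lambda$ may be multiple, and the main obstacle is that \eqref{result} involves an unknown eigenfunction $u\in E_k$, which could differ between the two meshes. I would instead use Theorem \ref{thm:multiple}, which is proved in the appendix and taken as given. For each frequency $(m,n)$ with $m^2+n^2=\lambda_k$ it provides a discrete eigenvalue with
\[
\lambda_h^{m,n}-\lambda_k=\frac{(m^4+n^4)h^2}{12}+O(h^4).
\]
The coefficient depends only on $(m,n)$, not on the mesh. So I pair $\lambda_h^{m,n}$ with $\lambda_{h/2}^{m,n}$; "the corresponding eigenvalue" is to be read this way. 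Then $\frac43\cdot\frac{(m^4+n^4)h^2}{48}-\frac13\cdot\frac{(m^4+n^4)h^2}{12}=0$, and \eqref{extra} follows.

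One point needs care. Distinct frequencies give $h^2$-coefficients that differ by $O(h^2)$ but are not $o(h^2)$. So for $h$ small the discrete eigenvalues clustering near $\lambda_k$ separate into groups labelled by frequency, and the labelling is consistent between $\T_h$ and $\T_{h/2}$. The case $m^4+n^4=m_*^4+n_*^4$ with $m^2+n^2=m_*^2+n_*^2$ forces $\{m,n\}=\{m_*,n_*\}$, so distinct frequencies do have distinct coefficients. Any pairing inside a group gives the same expansion, so the identification causes no ambiguity. I expect this matching across meshes to be the only delicate step; the rest is the algebra above.
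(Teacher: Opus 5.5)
Your proposal is correct and follows the route the paper intends: the paper gives no separate proof. It only remarks that the $h^2$-term of \eqref{result} (and, for the square with uniform meshes, of \eqref{corexp2} from Theorem \ref{thm:multiple}) shrinks by exactly the factor $1/4$ under uniform refinement, so $\tilde\lambda_h=\frac43\lambda_{h/2}-\frac13\lambda_h$ cancels it and leaves only the $O(h^4)$ residuals. Your extra step of matching frequency groups between $\T_h$ and $\T_{h/2}$ is a useful clarification the paper omits, but it relies on the full decomposition of Lemma \ref{lem:Spaceconv2} (every discrete eigenvalue near $\lambda_k$ lies in some frequency group with the correct multiplicity), not only on the existence statement of Theorem \ref{thm:multiple}.
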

\begin{remark}
Though we only prove the extrapolation \eqref{extra} is valid when $\lambda$ is simple or $\T_h$ is uniform, our numerical experiments show \eqref{extra} still holds without the above restrictions.
\end{remark}

\section{Equivalence between the Projected ${\rm E}Q_1^{\rm rot}$ and RRT Elements}
\label{sec:equ}
In this section, we shall prove the equivalence between the projected ${\rm E}Q_1^{\rm rot}$ and RRT elements for the Laplace problem.

Let $\llbracket \cdot \rrbracket_e$ be the jump across $e\in \E_h$ and $\{\cdot\}_e$ be the average on $e$. The finite element space of the E$Q_1^{\rm rot}$ element is
$$V_h^{\rm EQ} := \{v_h\in L^2(\Omega):v_h|_K\in V_h^{\rm EQ}(K),\forall K\in\T_h, \llbracket v_h\rrbracket_e=0, \forall e\in \E_h\},$$
where $V_h^{\rm EQ}(K):=\spn\{1,x,y,x^2,y^2\}$ on $K\in\T_h$. The basis of $V_h^{\rm EQ}(K)$ can be chosen as $v_h^{e_i}$ and $v_h^K$ such that $\int_{e_j}v_h^{e_i}\dd e = \delta_{i,j}$, $\int_K v_h^{e_i}\dd\bo{x} = 0$, $\int_{e_i}v_h^K\dd e = 0$ and $\int_K v_h^K\dd\bo{x} = 1$.

For the Laplace problem, the projected ${\rm E}Q_1^{\rm rot}$ element seeks $u_h^{\rm PEQ}\in V_h^{\rm EQ}$ such that
\begin{equation}\label{eqnPEQ}
(\nabla_h u_h^{\rm PEQ},\nabla_h v_h) = (\Pi_0f,v_h),~\forall v_h\in V_h^{\rm EQ}
\end{equation}
with $\Pi_0$ the $L^2$ projector onto $V_h$. The resolvent operator $T_h^{\rm PEQ}:V\rightarrow V_h$ is defined by $T_h^{\rm PEQ}f=\Pi_0u_h^{\rm PEQ}$ for any $f\in V$, where $u_h^{\rm PEQ}$ is the solution of \eqref{eqnPEQ}.

When solving the Laplace problem with RRT element, we seek $(\bo{\sigma}_h^{\rm RRT},u_h^{\rm RRT})\in \bo{\Sigma}_h\times V_h$ such that
\begin{equation}\label{eqnRRT}
\left\{
\begin{aligned}
(\bo{\sigma}_h^{\rm RRT},\bo{\tau}_h) - (\ddiv\bo{\tau}_h,u_h^{\rm RRT}) &= 0,~\forall \bo{\tau}_h\in\bo{\Sigma}_h,\\
(\ddiv\bo{\sigma}_h^{\rm RRT},v_h) &= (f,v_h),~\forall v_h\in V_h.
\end{aligned}
\right.
\end{equation}
For the eigenvalue problem, we care about $u_h^{\rm RRT}$ rather $\bo{\sigma}_h^{\rm RRT}$ here, so we define the resolvent operator $T_h^{\rm RRT}:V\rightarrow V_h$ as $T_h^{\rm RRT}f = u_h^{\rm RRT}$ for any $f\in V$, where $u_h^{\rm RRT}$ is computed by \eqref{eqnRRT}.

\begin{theorem}
Let $u_h^{\rm PEQ}$ and $(\bo{\sigma}_h^{\rm RRT},u_h^{\rm RRT})$ be the solution of \eqref{eqnPEQ} and \eqref{eqnRRT}, respectively. Then we have
\begin{equation}
\bo{\sigma}_h^{\rm RRT} = -\nabla_h u_h^{\rm PEQ},~u_h^{\rm RRT} = \Pi_0u_h^{\rm PEQ}.
\end{equation}
\end{theorem}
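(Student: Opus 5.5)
The plan is to show that the pair $(-\nabla_h u_h^{\rm PEQ},\Pi_0u_h^{\rm PEQ})$ satisfies the RRT system \eqref{eqnRRT}. Uniqueness of the RRT solution, which follows from the stability conditions, then gives the claim. This mirrors the Arnold--Brezzi/Marini argument used in \cite{HuMa15} for the triangular case.

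\textbf{Step 1: $-\nabla_h u_h^{\rm PEQ}$ lies in $\bo{\Sigma}_h$.} Set $\bs^*:=-\nabla_h u_h^{\rm PEQ}$. On each $K$, $v\in\spn\{1,x,y,x^2,y^2\}$ gives $\nabla v=(a+2cx,\,b+2dy)^T\in Q_{10}(K)\times Q_{01}(K)$, so $\bs^*|_K$ is in the local RRT space. We then need normal continuity across interior edges, and this is the main obstacle. An RRT field is determined by its edge fluxes, and its normal component on an edge is constant. So it suffices to show that for each interior edge $e$ the fluxes $\int_e\bs^*\cdot\bo n\,\dd s$ computed from the two sides agree. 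To get this, test \eqref{eqnPEQ} with the edge basis function $v_h^{e}$, whose global version has edge-mean $1$ on $e$, edge-mean $0$ on other edges, and zero element means on both neighbouring elements. Since $\int_K v_h^{e}=0$, the right-hand side $(\Pi_0f,v_h^e)$ vanishes. Integrating by parts elementwise, and using that $\Delta u_h^{\rm PEQ}$ is constant on $K$ while $\int_K v_h^e=0$, the volume terms drop. Because $\nabla u_h^{\rm PEQ}\cdot\bo n$ is constant on each edge, the boundary terms reduce to edge means of $v_h^e$. This leaves $0=\sum_K\int_{\partial K}\partial_n u_h^{\rm PEQ}\,v_h^e=\llbracket\partial_n u_h^{\rm PEQ}\rrbracket_e |e|\cdot\frac{1}{|e|}$, which forces the normal jump to vanish. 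Hence $\bs^*\in\bo{\Sigma}_h$. On boundary edges nothing is required, since $\bo{\Sigma}_h$ has no boundary constraint.

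\textbf{Step 2: second RRT equation.} Test \eqref{eqnPEQ} with the bubble $v_h^K$, which has edge-means zero and mean $1$ on $K$. The same integration by parts gives $-\Delta u_h^{\rm PEQ}|_K\,|K|\cdot\frac{1}{|K|}\cdot|K|=(\Pi_0 f,v_h^K)=\Pi_0f|_K$, taking care with normalisations. Thus $\ddiv\bs^*=-\Delta_h u_h^{\rm PEQ}=\Pi_0f$ elementwise. This yields $(\ddiv\bs^*,v_h)=(f,v_h)$ for all $v_h\in V_h$.

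\textbf{Step 3: first RRT equation.} For $\bt_h\in\bo{\Sigma}_h$, integrate by parts on each $K$:
$(\bs^*,\bt_h)=-\sum_K\int_K\nabla u_h^{\rm PEQ}\cdot\bt_h=\sum_K\int_K u_h^{\rm PEQ}\ddiv\bt_h-\sum_K\int_{\partial K}u_h^{\rm PEQ}\,\bt_h\cdot\bo n$.
Since $\ddiv\bt_h$ is constant on $K$, the first sum equals $(\ddiv\bt_h,\Pi_0u_h^{\rm PEQ})$. For the edge sum, $\bt_h\cdot\bo n$ is a single-valued constant on each edge, and the edge means of $u_h^{\rm PEQ}$ are continuous because $\llbracket u_h^{\rm PEQ}\rrbracket_e$ has zero mean. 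So interior contributions cancel. On boundary edges, the edge mean of $u_h^{\rm PEQ}$ vanishes by the Dirichlet condition built into $V_h^{\rm EQ}$, which I would make explicit as zero boundary edge-means. Hence $(\bs^*,\bt_h)-(\ddiv\bt_h,\Pi_0u_h^{\rm PEQ})=0$.

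Steps 1--3 show that $(\bs^*,\Pi_0u_h^{\rm PEQ})$ solves \eqref{eqnRRT}, and uniqueness concludes the proof. As a corollary, $T_h^{\rm RRT}=T_h^{\rm PEQ}$. The delicate parts are Step 1 and the bookkeeping of the normalisation constants in the basis-function tests.
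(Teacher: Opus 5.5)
Your proposal is correct and follows essentially the same route as the paper. You prove $-\nabla_h u_h^{\rm PEQ}\in\bo{\Sigma}_h$ by testing with the edge functions $v_h^e$, then verify the two equations of \eqref{eqnRRT} by elementwise integration by parts (using mean continuity and zero boundary edge means) and by testing with the bubbles $v_h^K$. The differences are cosmetic: the paper runs the edge test on $\bo{\sigma}_h^{\rm RRT}+\nabla_h u_h^{\rm PEQ}$ rather than directly on $\nabla_h u_h^{\rm PEQ}$, and your direct version avoids that detour. Also, in your Step 2 the identity should read $-\Delta u_h^{\rm PEQ}|_K\,|K|=\Pi_0 f|_K\,|K|$ when $v_h^K$ has mean $1$ on $K$.
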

\begin{proof}
We prove it in two steps.

First, we show $\nabla_h u_h^{\rm PEQ}\subset\bo{\Sigma}_h$. It is easy to see $\nabla V_h^{\rm EQ}(K)=Q_{10}(K)\times Q_{01}(K)$. Thus we only need to show $\nabla_h u_h^{\rm PEQ}\in\bo{\Sigma}$. For all $v_h\in V_h^{\rm EQ}$, since $\ddiv\bo{\sigma}_h^{\rm RRT}\in V_h$, integral by part yields
\begin{equation}
\begin{aligned}
(\bo{\sigma}_h^{\rm RRT},\nabla_hv_h) &= -(\ddiv\bo{\sigma}_h^{\rm RRT}, v_h) + \sum_{e\in\E_h}\int_e\bo{\sigma}_h^{\rm RRT}\cdot\bo{n}_e\llbracket v_h\rrbracket_e\dd e\\
&= -(\ddiv\bo{\sigma}_h^{\rm RRT}, \Pi_0v_h) + \sum_{e\in\E_h}\int_e\bo{\sigma}_h^{\rm RRT}\cdot\bo{n}_e\llbracket v_h\rrbracket_e\dd e
\end{aligned}
\end{equation}
Since $\bo{\sigma}_h^{\rm RRT}\cdot\bo{n}_e$ is constant on $e$ and $\int_e\llbracket v_h\rrbracket_e\dd e=0$, we have
$$\sum_{e\in\E_h}\int_e\bo{\sigma}_h^{\rm RRT}\cdot\bo{n}_e\llbracket v_h\rrbracket_e\dd e=0,$$
and then
\begin{equation}
(\bo{\sigma}_h^{\rm RRT},\nabla_hv_h) = -(\ddiv\bo{\sigma}_h^{\rm RRT}, \Pi_0v_h) = -(f,\Pi_0v_h) = -(\nabla_hu_h^{\rm PEQ},\nabla_hv_h).
\end{equation}
For any $e\in\E_h^i$, we choose $v_h^e\in V_h^{\rm EQ}$ satisfying both a) and b):
\begin{itemize}
\item[a)] $\int_ev_h^e\dd e = 1$ and $\int_{e'}v_h^e\dd e = 0$ for any $e'$ other than $e$,
\item[b)] $\int_Kv_h^e \dd\bo{x}= 0$ for any $K\in\T_h$.
\end{itemize}
Since $\ddiv(\bo{\sigma}_h^{\rm RRT} - \nabla_hu_h^{\rm PEQ})$ is piecewise constant, we have
\begin{equation}
\begin{aligned}
0 = (\bo{\sigma}_h^{\rm RRT} + \nabla_hu_h^{\rm PEQ},\nabla_hv_h^e) =& \int_e\llbracket\bo{\sigma}_h^{\rm RRT} + \nabla_hu_h^{\rm PEQ}\rrbracket_e\cdot\bo{n}_e\{v_h^e\}_e\dd e\\
&+ \int_e\{\bo{\sigma}_h^{\rm RRT} + \nabla_hu_h^{\rm PEQ}\}_e\cdot\bo{n}_e\llbracket v_h^e\rrbracket_e\dd e.
\end{aligned}
\end{equation}
Note that $\llbracket\bo{\sigma}_h^{\rm RRT} + \nabla_hu_h^{\rm PEQ}\rrbracket_e\cdot \bo{n}_e$ and $\{\bo{\sigma}_h^{\rm RRT} + \nabla_hu_h^{\rm PEQ}\}_e\cdot \bo{n}_e$ are constant on $e$, and then we can yield 
\begin{equation}
\llbracket\bo{\sigma}_h^{\rm RRT} + \nabla_hu_h^{\rm PEQ}\rrbracket_e\cdot \bo{n}_e=0.
\end{equation}

The second step is to show $(-\nabla_h u_h^{\rm PEQ},\Pi_0u_h^{\rm PEQ})$ satisfies \eqref{eqnRRT}. For any $\bo{\tau}_h\in\bo{\Sigma}_h$, we have $\bo{\tau}_h\cdot\bo{n}_e$ is constant on any $e\in\E_h$ and $\ddiv \bo{\tau}_h$ is piecewise constant, and then we get
\begin{equation}
\begin{aligned}
(-\nabla_h u_h^{\rm PEQ},\bo{\tau}) &= (\ddiv \bo{\tau}_h, u_h^{\rm PEQ}) - \sum_{e\in\E_h}\int_e\bo{\tau}\cdot\bo{n}_e\llbracket u_h^{\rm PEQ}\rrbracket_e\\
&= (\ddiv \bo{\tau}_h, u_h^{\rm PEQ}) = (\ddiv \bo{\tau}_h, \Pi_0u_h^{\rm PEQ}),
\end{aligned}
\end{equation}
which is the first equation of \eqref{eqnRRT}. For any $K\in\T_h$, we choose $v_h^K\in V_h^{\rm EQ}$ satisfying both c) and d):
\begin{itemize}
 \item[c)] $\int_Kv_h^K\dd\bo{x} = 1$ and $\int_{K'}v_h^K\dd\bo{x} = 0$ for any $K'\in\T_h$ other than $K$,
 \item[d)] $\int_ev_h^K\dd e = 0$ for any $e\in\E_h$.
\end{itemize}
Then, recall $\nabla_h u_h^{\rm PEQ}\cdot\bo{n}_e$ is a constant on $e$, and then we have
\begin{equation}
\begin{aligned}
(\ddiv(-\nabla_h u_h^{\rm PEQ}),v_h^K) &= (\nabla_h u_h^{\rm PEQ},\nabla_h v_h^K) - \sum_{e\in\partial K}\nabla_h u_h^{\rm PEQ}\cdot\bo{n}_e v_h^K\\
&= (\Pi_0f, v_h^K),
\end{aligned}
\end{equation}
which implies $\ddiv(-\nabla_h u_h^{\rm PEQ}) = \Pi_0f$ on $K$. Then the second equality of \eqref{eqnRRT} is verified and the proof is completed.
\end{proof}

Now let us consider the equivalence between the projected ${\rm E}Q_1^{\rm rot}$ and RRT Elements for the Laplace eigenvalue problem. The projected ${\rm E}Q_h^{\rm rot}$ element seeks $(\lambda_h^{\rm PEQ},u_h^{\rm PEQ})\in \mathbb{R}\times V_h^{\rm EQ}$ with $\|\Pi_0u_h^{\rm PEQ}\|_0=1$, such that
\begin{equation}\label{deigPEQ}
(\nabla_h u_h^{\rm PEQ},v_h) = \lambda_h^{\rm PEQ}(\Pi_0u_h^{\rm PEQ},v_h) = 0,~\forall v_h\in V_h^{\rm EQ}.
\end{equation}
We assert the two elements are equivalence by judging the resolvent operators $T_h^{\rm RRT}$ and $T_h^{\rm PEQ}$, which always produce the same result, implying $T_h^{\rm RRT}=T_h^{\rm PEQ}$. In fact, \eqref{deigmp} solves the eigenpairs of $T_h^{\rm RRT}$ and \eqref{deigPEQ} of $T_h^{\rm PEQ}$.
\begin{corollary}
Let $(\lambda_h^{\rm RRT},\bo{\sigma}_h^{\rm RRT},u_h^{\rm RRT})$ and $(\lambda_h^{\rm PEQ},u_h^{\rm PEQ})$ be the eigenpairs of \eqref{deigmp} and \eqref{deigPEQ}, respectively. Then we have
\begin{equation}
\lambda_h^{\rm RRT} = \lambda_h^{\rm PEQ},~\bo{\sigma}_h^{\rm RRT} = -\nabla_h u_h^{\rm PEQ},~u_h^{\rm RRT} = \Pi_0u_h^{\rm PEQ}.
\end{equation}
\end{corollary}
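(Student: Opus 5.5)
The plan is to reduce the corollary to the source-problem theorem just proved, which in resolvent language says $T_h^{\rm RRT}f=\Pi_0u_h^{\rm PEQ}=T_h^{\rm PEQ}f$ for every $f\in V$. First I note that only $\Pi_0f$ enters either scheme. In \eqref{eqnRRT} the test functions are piecewise constant, so $(f,v_h)=(\Pi_0f,v_h)$, and \eqref{eqnPEQ} uses $\Pi_0f$ explicitly. Hence both resolvents factor through $\Pi_0$, and restricted to $V_h$ they coincide as linear operators $V_h\to V_h$. The eigenproblem \eqref{deigPEQ} I read as $(\nabla_hu_h^{\rm PEQ},\nabla_hv_h)=\lambda_h^{\rm PEQ}(\Pi_0u_h^{\rm PEQ},v_h)$ for all $v_h\in V_h^{\rm EQ}$.

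\textbf{From RRT to PEQ.} Let $(\lambda_h^{\rm RRT},\bo{\sigma}_h^{\rm RRT},u_h^{\rm RRT})$ solve \eqref{deigmp}. Then it is exactly the solution of \eqref{eqnRRT} with data $f=\lambda_h^{\rm RRT}u_h^{\rm RRT}\in V_h$. By the theorem, let $w_h\in V_h^{\rm EQ}$ solve \eqref{eqnPEQ} with this $f$. Then $\bo{\sigma}_h^{\rm RRT}=-\nabla_hw_h$ and $u_h^{\rm RRT}=\Pi_0w_h$. Substituting back, $w_h$ satisfies
$$(\nabla_hw_h,\nabla_hv_h)=\lambda_h^{\rm RRT}(\Pi_0w_h,v_h)\quad\forall v_h\in V_h^{\rm EQ},$$
and $\|\Pi_0w_h\|_0=\|u_h^{\rm RRT}\|_0=1$. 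So $(\lambda_h^{\rm RRT},w_h)$ is an eigenpair of \eqref{deigPEQ} with the stated identities.

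\textbf{From PEQ to RRT.} Conversely, given an eigenpair $(\lambda_h^{\rm PEQ},u_h^{\rm PEQ})$ of \eqref{deigPEQ}, $u_h^{\rm PEQ}$ solves \eqref{eqnPEQ} with $f=\lambda_h^{\rm PEQ}\Pi_0u_h^{\rm PEQ}$, since $\Pi_0f=f$. The theorem then produces the RRT solution $(-\nabla_hu_h^{\rm PEQ},\Pi_0u_h^{\rm PEQ})$ of \eqref{eqnRRT} with this $f$. Its second equation reads $(\ddiv\bo{\sigma}_h,v_h)=\lambda_h^{\rm PEQ}(\Pi_0u_h^{\rm PEQ},v_h)$, which is exactly \eqref{deigmp}. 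Hence $\lambda_h^{\rm RRT}=\lambda_h^{\rm PEQ}$, together with the other two identities.

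\textbf{Main obstacle.} The delicate point is nondegeneracy and the counting of eigenpairs. I must check that $\Pi_0u_h^{\rm PEQ}\neq0$, so that the normalization in \eqref{deigPEQ} makes sense. If $\Pi_0u_h^{\rm PEQ}=0$, then the PEQ equation with $v_h=u_h^{\rm PEQ}$ gives $\nabla_hu_h^{\rm PEQ}=0$. The zero edge-mean jumps and boundary conditions then force $u_h^{\rm PEQ}=0$. So every nontrivial PEQ eigenfunction has $\Pi_0u_h^{\rm PEQ}\neq0$.

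I also need the correspondence to respect multiplicities. For this, the map $w_h\mapsto\Pi_0w_h$ must be injective on each PEQ eigenspace. This follows from uniqueness of the solution of \eqref{eqnPEQ}: $w_h$ is determined by $\Pi_0 f=\lambda\Pi_0w_h$. Consequently the two spectra coincide, multiplicities included, and the eigenspaces correspond bijectively via $\Pi_0$ and $-\nabla_h$.
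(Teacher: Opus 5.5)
Your proposal is correct and follows the paper's route. The paper deduces the corollary from the source-problem theorem via $T_h^{\rm RRT}=T_h^{\rm PEQ}$, noting that \eqref{deigmp} and \eqref{deigPEQ} are the eigenproblems of these two resolvents; you make this explicit by substituting $f=\lambda_h u_h$ in both directions, which also delivers the identity $\bo{\sigma}_h^{\rm RRT}=-\nabla_h u_h^{\rm PEQ}$ and the nondegeneracy and multiplicity checks that the paper's one-line argument leaves implicit.
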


\begin{remark}
Though we only prove the equivalence between the two elements in 2-dimension, the same thing is true for problems of higher dimensions, and the proof is the same line.
\end{remark}

\begin{remark}
It is known that the standard ${\rm E}Q_1^{\rm rot}$ element produces lower bounds for Laplacian eigenvalues \cite{Li.Y2008,lin08}. It is interesting to see that the projected ${\rm E}Q_1^{\rm rot}$ element produces upper bounds for Laplacian eigenvalues on rectangle domain, since $\lambda_h^{\rm PEQ} = \lambda_h^{\rm RRT} \geq \lambda$.
\end{remark}

\section{Numerical Experiments}
\label{sec:num}

In this section, we show some numerical experiments on uniform and non-uniform meshes to verify our expansion identity. We put $\Omega=[0,\pi]^2$ in this section. Let $\T_{h,0}$ be the initial mesh grided by node vectors $X_0$ and $Y_0$. Generally the $k$-th level uniformly refined mesh $\T_{h,k}$ is generated by refining the node vectors $X_{k-1}$ and $Y_{k-1}$ by adding midpoints between each two consecutive nodes into the node vectors.
Here we give three groups of settings:
\begin{itemize}
\item[a)] Uniform mesh. $X_0 = Y_0 = \{\frac{i\pi}{8}\}_{i=0}^8$.
\item[b)] Quasi-uniform mesh. $X_0 = \{\frac{i\pi}{8}\}_{i=0}^8$, $Y_0 = \{\frac{j\pi}{16}\}_{j=0}^{16}$.
\item[c)] Nonuniform mesh. $X_0 = \{0,\frac{\pi}{4},\frac{\pi}{2},\frac{2\pi}{3},\frac{5\pi}{6},\pi\}$, $Y_0 = \{0,\frac{\pi}{6},\frac{\pi}{3},\frac{\pi}{2},\frac{3\pi}{4},\pi\}$.
\end{itemize}
Denote the residual of the expansion identity \eqref{result2} as
$r_h = e_h^1-e_h^2$ with $$e_h^1 = \lambda_h-\lambda$$ and $$e_h^2 = \frac{1}{12}\sum_{K_{i,j}\in \T_h}\Big(h_{x_i}^2\int_{K_{i,j}}u_{xx}^2\dd x\dd y + h_{y_j}^2\int_{K_{i,j}}u_{yy}^2\Big)\dd x\dd y.$$

The convergence rates of eigenvalues and residuals are computed by
$$\log_2\frac{\lambda_{2h}-\lambda}{\lambda_h-\lambda}~{\rm and}~\log_2\frac{r_{2h}}{r_h}$$
on the finest mesh $\T_h$, respectively.

We show the the values and residuals of the first six eigenvalues in Tables \ref{casea}-\ref{casec:residual}, where residuals of simple eigenvalues are considered when mesh is not uniform. To make things clearer, we display the vales of $e_h^1$ and $e_h^2$ for the first eigenvalue in each case in Figures \ref{Fig1}-\ref{Fig3}, where $e_h^2$ in cases a) and b) are computed exactly while in cases c) $e_h^2$ is computed numerically with high-order numerical integral.

\begin{table}
\centering
\caption{The first six eigenvalues on uniform mesh a)}\label{casea}
\begin{tabular}{cccccccc}
\toprule
\ &$\T_{h,0}$&$\T_{h,1}$&$\T_{h,2}$&$\T_{h,3}$&$\T_{h,4}$&Trend&Rate\\
\midrule
$\lambda_{1,h}$&2.0258&2.0064&2.0016&2.0004&2.0001&$\searrow$&2.00\\
$\lambda_{2,h}$&5.2225&5.0549&5.0137&5.0034&5.0009&$\searrow$&2.00\\
$\lambda_{3,h}$&5.2225&5.0549&5.0137&5.0034&5.0009&$\searrow$&2.00\\
$\lambda_{4,h}$&8.4191&8.1033&8.0257&8.0064&8.0016&$\searrow$&2.00\\
$\lambda_{5,h}$&11.0932&10.2663&10.0660&10.0165&10.0041&$\searrow$&2.00\\
$\lambda_{6,h}$&11.0932&10.2663&10.0660&10.0165&10.0041&$\searrow$&2.00\\
\botrule
\end{tabular}
\end{table}

For each column of Table \ref{casea}, the values of $\lambda_{2,h}$ and $\lambda_{3,h}$ are equal up to tolerance error $10^{-12}$. The same thing is true for $\lambda_{5,h}$ and $\lambda_{6,h}$.

\begin{table}
\centering
\caption{Residuals of the first six eigenvalues on uniform mesh a)}\label{casea:residual}
\begin{tabular}{ccccccc}
\toprule
\ &$\T_{h,0}$&$\T_{h,1}$&$\T_{h,2}$&$\T_{h,3}$&$\T_{h,4}$&Rate\\
\midrule
$r_{1,h}$&1.30e-04&8.23e-06&5.16e-07&3.22e-08&2.02e-09&4.00\\
$r_{2,h}$&4.00e-03&2.64e-04&1.67e-05&1.05e-06&6.55e-08&4.00\\
$r_{3,h}$&4.00e-03&2.64e-04&1.67e-05&1.05e-06&6.55e-08&4.00\\
$r_{4,h}$&7.86e-03&5.20e-04&3.29e-05&2.06e-06&1.29e-07&4.00\\
$r_{5,h}$&3.94e-02&2.90e-03&1.87e-04&1.17e-05&7.35e-07&4.00\\
$r_{6,h}$&3.94e-02&2.90e-03&1.87e-04&1.17e-05&7.35e-07&4.00\\
\botrule
\end{tabular}
\end{table}

\begin{figure}
  \centering
  \includegraphics[width=.75\textwidth]{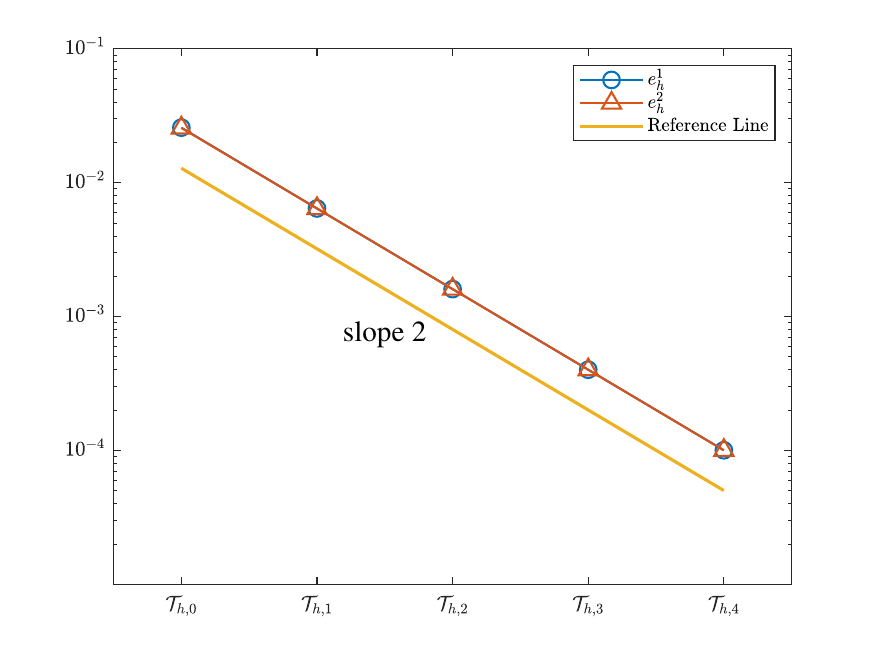}
  \caption{Error and the dominant term in error expansion of the first eigenvalue in case a)}\label{Fig1}
\end{figure}


\begin{table}
\centering
\caption{The first six eigenvalues on quasi-uniform mesh b)}\label{caseb}
\begin{tabular}{cccccccc}
\toprule
\ &$\T_{h,0}$&$\T_{h,1}$&$\T_{h,2}$&$\T_{h,3}$&$\T_{h,4}$&Trend&Rate\\
\midrule
$\lambda_{1,h}$&2.0161&2.0040&2.0010&2.0003&2.0001&$\searrow$&2.00\\
$\lambda_{2,h}$&5.0646&5.0161&5.0040&5.0010&5.0003&$\searrow$&2.00\\
$\lambda_{3,h}$&5.2128&5.0525&5.0131&5.0033&5.0008&$\searrow$&2.00\\
$\lambda_{4,h}$&8.2612&8.0645&8.0161&8.0040&8.0010&$\searrow$&2.00\\
$\lambda_{5,h}$&10.2760&10.0685&10.0171&10.0043&10.0011&$\searrow$&2.00\\
$\lambda_{6,h}$&11.0835&10.2639&10.0654&10.0163&10.0041&$\searrow$&2.00\\
\botrule
\end{tabular}
\end{table}

\begin{table}
\centering
\caption{Residuals of the first three simple eigenvalues on quasi-uniform mesh b)}\label{caseb:residual}
\begin{tabular}{ccccccc}
\toprule
\ &$\T_{h,0}$&$\T_{h,1}$&$\T_{h,2}$&$\T_{h,3}$&$\T_{h,4}$&Rate\\
\midrule
$r_{1,h}$&6.91e-05&4.37e-06&2.74e-07&1.71e-08&1.07e-09&4.00\\
$r_{4,h}$&4.19e-03&2.76e-04&1.75e-05&1.10e-06&6.85e-07&4.00\\
$r_{11,h}$&1.16e-00&3.08e-03&1.98e-04&1.25e-05&7.80e-07&4.00\\
\botrule
\end{tabular}
\end{table}

\begin{figure}
  \centering
  \includegraphics[width=.75\textwidth]{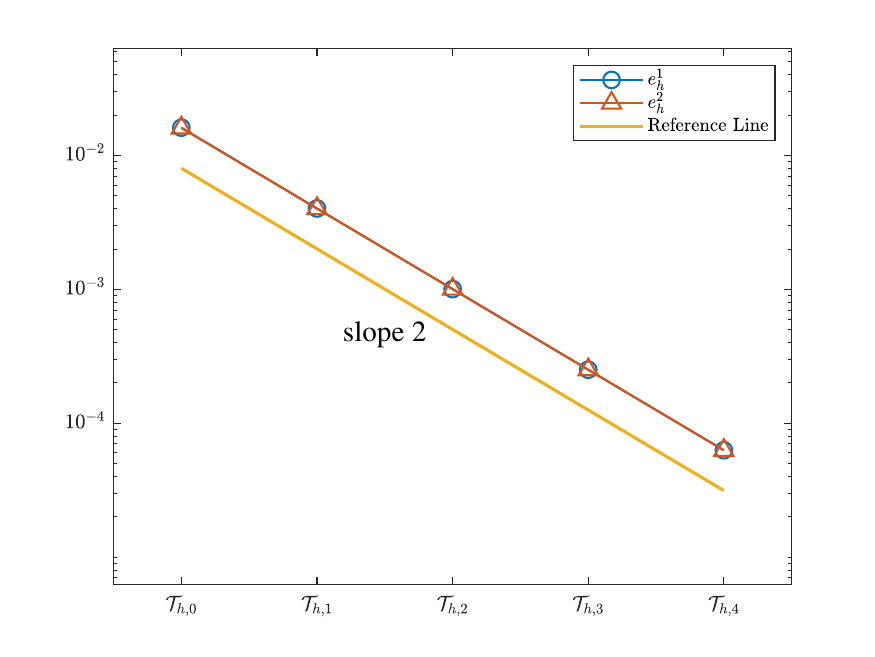}
  \caption{Error and the dominant term in error expansion of the first eigenvalue in case b)}\label{Fig2}
\end{figure}


\begin{table}
\centering
\caption{Eigenvalues on nonuniform mesh c)}\label{casec}
\begin{tabular}{cccccccc}
\toprule
\ &$\T_{h,0}$&$\T_{h,1}$&$\T_{h,2}$&$\T_{h,3}$&$\T_{h,4}$&Trend&Rate\\
\midrule
$\lambda_{1,h}$&2.0750&2.0186&2.0046&2.0012&2.0003&$\searrow$&2.00\\
$\lambda_{2,h}$&5.6299&5.1583&5.0395&5.0099&5.0025&$\searrow$&2.00\\
$\lambda_{3,h}$&5.6299&5.1583&5.0395&5.0099&5.0025&$\searrow$&2.00\\
$\lambda_{4,h}$&9.1848&8.2981&8.0743&8.0186&8.0046&$\searrow$&2.00\\
$\lambda_{5,h}$&13.0390&10.7760&10.1913&10.0476&10.0119&$\searrow$&2.00\\
$\lambda_{6,h}$&13.0390&10.7760&10.1913&10.0476&10.0119&$\searrow$&2.00\\
\botrule
\end{tabular}
\end{table}

\begin{table}
\centering
\caption{Residuals of the first three simple eigenvalues on non-uniform mesh c)}\label{casec:residual}
\begin{tabular}{ccccccc}
\toprule
\ &$\T_{h,0}$&$\T_{h,1}$&$\T_{h,2}$&$\T_{h,3}$&$\T_{h,4}$&Rate\\
\midrule
$r_{1,h}$&7.16e-04&4.67e-05&2.94e-06&1.84e-07&1.15e-08&4.00\\
$r_{4,h}$&-3.16e-03&1.06e-03&8.20e-05&5.36e-06&3.39e-07&4.00\\
$r_{11,h}$&-1.12e-02&2.99e-02&2.09e-03&1.34e-04&8.40e-06&4.00\\
\botrule
\end{tabular}
\end{table}

\begin{figure}
  \centering
  \includegraphics[width=.75\textwidth]{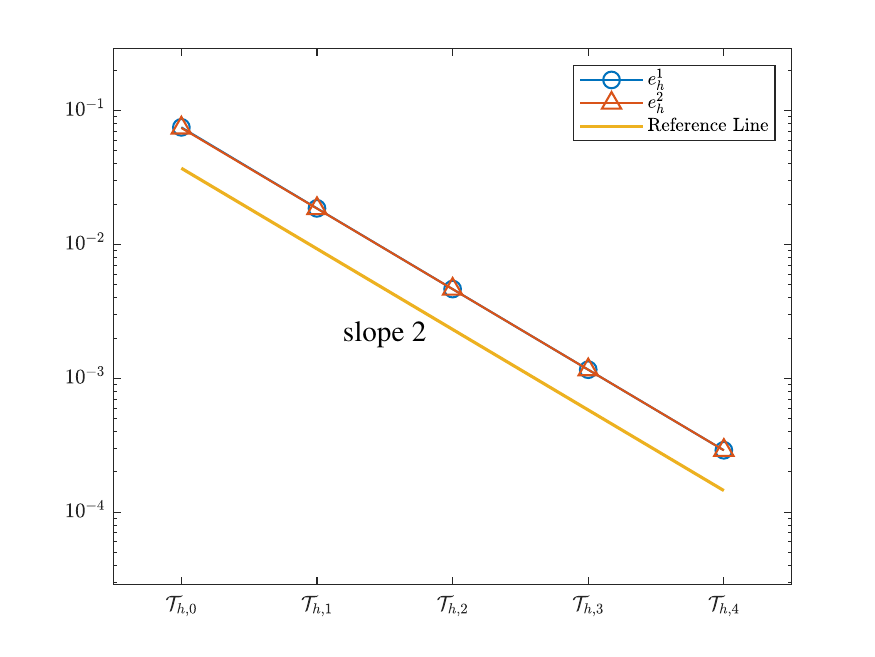}
  \caption{Error and the dominant term in error expansion of the first eigenvalue in case c)}\label{Fig3}
\end{figure}

\FloatBarrier

\begin{appendices}
\renewcommand{\theHequation}{appendix.\arabic{equation}}

\section{Proof of Theorem \ref{thm:multiple}}
\label{secA1}
\begin{lemma}\label{space}
Let $X_*$ and $Y_*$ be proper subspaces of Hilbert spaces $X$ and $Y$ endowed inner product $(\cdot,\cdot)_H$ with $\delta(X,Y)=\epsilon$ and $\delta(X_*,Y_*)\leq c\epsilon$, where $\epsilon$ is sufficiently small and $c$ is independent of $\epsilon$. Then
\begin{equation}
\delta(X_*^{\perp},Y_*^{\perp}) \leq C\epsilon,
\end{equation}
where $X_*^{\perp}$ and $Y_*^{\perp}$ are the orthogonal complementary spaces of $X_*$ and $Y_*$ in $X$ and $Y$ under $(\cdot,\cdot)_H$, respectively, and $C$ depends on $c$ but not on $\epsilon$.
\end{lemma}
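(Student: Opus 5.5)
We interpret Lemma \ref{space} in the setting where it will be used: $X$ and $Y$ are finite dimensional subspaces of a common Hilbert space $H$ with $\dim X=\dim Y$, and $X_*\subset X$, $Y_*\subset Y$ with $\dim X_*=\dim Y_*$. Hence $\dim X_*^{\perp}=\dim Y_*^{\perp}$, and the gap is symmetric, as recalled in Section \ref{sec:pre}. The natural route is through the projector characterization $\delta(R,S)=\|P_R-P_S\|$. We use the orthogonal decompositions $P_X=P_{X_*}+P_{X_*^{\perp}}$ and $P_Y=P_{Y_*}+P_{Y_*^{\perp}}$, which hold because $X_*^{\perp}$ is the orthogonal complement of $X_*$ inside $X$, and similarly for $Y$. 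They give the identity
\begin{equation*}
P_{X_*^{\perp}}-P_{Y_*^{\perp}} = (P_X-P_Y) - (P_{X_*}-P_{Y_*}).
\end{equation*}

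Taking operator norms yields
\begin{equation*}
\tilde\delta(X_*^{\perp},Y_*^{\perp}) \le \tilde\delta(X,Y)+\tilde\delta(X_*,Y_*)\le (1+c)\epsilon.
\end{equation*}
Combining this with the equality $\tilde\delta=\delta$ for equidimensional finite dimensional subspaces, quoted from \cite{kato58}, we obtain the claim with $C=1+c$.

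The only delicate step is the identification $\delta=\tilde\delta$. If one wants to avoid it, or if the dimensions were not known to match, we would argue directly. Take $x\in X_*^{\perp}$ with $\|x\|_H=1$. Choose $y=P_Yx$, so that $\|x-y\|\le\epsilon$. Then decompose $y=P_{Y_*}y+P_{Y_*^{\perp}}y$, and bound $\|P_{Y_*}y\|\le \|P_{Y_*}x\|+\epsilon$.

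To estimate $\|P_{Y_*}x\|$, write $P_{Y_*}x=z$ and test against $z$. This gives $\|z\|^2=(x,z)=(x,z-P_{X_*}z)$, using $x\perp X_*$. Applying the gap bound from $Y_*$ to $X_*$ gives $\|z-P_{X_*}z\|\le c\epsilon\|z\|$. Hence $\|z\|\le c\epsilon$, and therefore
\begin{equation*}
\|x-P_{Y_*^{\perp}}x\|\le \|x-y\|+\|P_{Y_*}y\|\le(2+c)\epsilon.
\end{equation*}

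This direct argument uses $\delta(Y_*,X_*)\le c\epsilon$, which is where symmetry of the gap, and hence equal dimension for small $\epsilon$, is needed. I expect this symmetry point to be the main obstacle to address carefully, since the paper's assumption is stated only as $\delta(X_*,Y_*)\le c\epsilon$.
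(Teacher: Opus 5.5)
Your main argument is the paper's proof: the identity $P_{X_*^{\perp}}-P_{Y_*^{\perp}}=(P_X-P_Y)-(P_{X_*}-P_{Y_*})$, the triangle inequality for operator norms, and the identification $\delta=\tilde\delta$ for equidimensional finite-dimensional subspaces, giving $C=1+c$. You also make explicit the equal-dimension setting, which the paper uses tacitly, and your alternative direct argument giving $(2+c)\epsilon$ is sound, though neither is needed.
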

\begin{proof}
The property of orthogonal projection gives $P_X = P_{X_*} + P_{X_*^{\perp}}$ and $P_Y = P_{Y_*} + P_{Y_*^{\perp}}$. Hence we have
$$\delta(X_*^{\perp},Y_*^{\perp}) = \|P_{X_*^{\perp}}-P_{Y_*^{\perp}}\|\leq \|P_X-P_Y\|+\|P_{X_*}-P_{Y_*}\|\leq (1+c)\epsilon,$$
where the operator norm $\|\cdot\|$ is induced by $(\cdot,\cdot)_H$.
\end{proof}

\begin{lemma}
Denote  $\bo{\Sigma}_{h0}^{\perp}=\{\bs_h\in\bo{\Sigma}_h:(\bs_h,\bt_h)=0,~\forall \bt_h\in\bo{\Sigma}_{h0}\}$. The eigenvalue problem \eqref{deigmp} is equivalent to the $H(\ddiv)$ formulation: find $(\bar{\lambda}_h,\bar{\bs}_h)\in\mathbb{R}\times\bo{\Sigma}_{h0}^{\perp}$ with $\|\bar{\bs}_h\|_0=1$ such that
\begin{equation}\label{deigmpdiv}
(\ddiv\bar{\bs}_h,\ddiv\bar{\bs}_h) = \bar{\lambda}_h(\bar{\bs}_h,\bar{\bs}_h),
\end{equation}
with relations
\begin{equation}
\bar\lambda_h = \lambda_h,~\bar\bs_h = \lambda_h^{-1/2}\bs_h,~u_h = \lambda_h^{-1/2}\ddiv\bar\bs_h.
\end{equation}
\end{lemma}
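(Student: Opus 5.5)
The equivalence follows from eliminating $u_h$ through the second equation of \eqref{deigmp}, using $\ddiv\bo{\Sigma}_h\subset V_h$, and identifying the remaining problem as a generalized eigenproblem on $\bo{\Sigma}_{h0}^{\perp}$. We read \eqref{deigmpdiv} as the variational statement
$$(\ddiv\bar{\bs}_h,\ddiv\bt_h)=\bar\lambda_h(\bar{\bs}_h,\bt_h)\quad\forall\bt_h\in\bo{\Sigma}_{h0}^{\perp};$$
its diagonal version is just the Rayleigh quotient identity. The argument has two directions.

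\emph{From \eqref{deigmp} to \eqref{deigmpdiv}.} Let $(\lambda_h,\bs_h,u_h)$ solve \eqref{deigmp}.
\begin{enumerate}
\item Since $\ddiv\bo{\Sigma}_h\subset V_h$, we have $\ddiv\bs_h=\lambda_h u_h$, with $\lambda_h>0$.
\item For $\bt_h\in\bo{\Sigma}_{h0}$, the first equation gives $(\bs_h,\bt_h)=(\ddiv\bt_h,u_h)=0$, so $\bs_h\in\bo{\Sigma}_{h0}^{\perp}$.
\item For any $\bt_h\in\bo{\Sigma}_h$, substituting $u_h=\lambda_h^{-1}\ddiv\bs_h$ into the first equation gives
$$(\ddiv\bs_h,\ddiv\bt_h)=\lambda_h(\bs_h,\bt_h).$$
This holds in particular for $\bt_h\in\bo{\Sigma}_{h0}^{\perp}$.
\item Earlier we showed $(\bs_h,\bs_h)=\lambda_h(u_h,u_h)=\lambda_h$. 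So $\bar\bs_h=\lambda_h^{-1/2}\bs_h$ has unit $L^2$ norm, and $u_h=\lambda_h^{-1}\ddiv\bs_h=\lambda_h^{-1/2}\ddiv\bar\bs_h$, which is the stated relation.
\end{enumerate}

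\emph{From \eqref{deigmpdiv} to \eqref{deigmp}.} Let $(\bar\lambda_h,\bar\bs_h)$ solve \eqref{deigmpdiv}.
\begin{enumerate}
\item Test the variational identity with $\bar\bs_h$ to get $\bar\lambda_h=\|\ddiv\bar\bs_h\|_0^2\ge0$. Equality would force $\bar\bs_h\in\bo{\Sigma}_{h0}\cap\bo{\Sigma}_{h0}^{\perp}=\{0\}$, contradicting $\|\bar\bs_h\|_0=1$. Hence $\bar\lambda_h>0$.
\item Set $\lambda_h=\bar\lambda_h$, $\bs_h=\lambda_h^{1/2}\bar\bs_h$ and $u_h=\lambda_h^{-1/2}\ddiv\bar\bs_h\in V_h$. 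Then $\ddiv\bs_h=\lambda_h u_h$. Since $\ddiv$ maps $\bo{\Sigma}_h$ onto $V_h$ (inf-sup condition), testing against all $v_h\in V_h$ gives the second equation of \eqref{deigmp}.
\item For the first equation, decompose an arbitrary $\bt_h=\bt_h^0+\bt_h^\perp$ with $\bt_h^0\in\bo{\Sigma}_{h0}$ and $\bt_h^\perp\in\bo{\Sigma}_{h0}^{\perp}$. The $\bt_h^0$ part contributes $(\bs_h,\bt_h^0)=0=(\ddiv\bt_h^0,u_h)$. For the $\bt_h^\perp$ part, the variational identity gives
$$(\bs_h,\bt_h^\perp)=\lambda_h^{1/2}\lambda_h^{-1}(\ddiv\bar\bs_h,\ddiv\bt_h^\perp)=(u_h,\ddiv\bt_h^\perp).$$
Adding the two parts yields the first equation of \eqref{deigmp} for every $\bt_h\in\bo{\Sigma}_h$.
\item The normalization $\|u_h\|_0^2=\lambda_h^{-1}\|\ddiv\bar\bs_h\|_0^2=1$ follows from step 1.
\end{enumerate}

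The only delicate point is interpreting \eqref{deigmpdiv} correctly. As written it is only the Rayleigh quotient identity, which does not determine eigenpairs. The equivalence should be stated for the variational form above on $\bo{\Sigma}_{h0}^{\perp}$. Equivalently, \eqref{deigmpdiv} gives the stationary points of the Rayleigh quotient $\|\ddiv\bt_h\|_0^2/\|\bt_h\|_0^2$ on $\bo{\Sigma}_{h0}^{\perp}$, which is what the min-max argument for Theorem~\ref{thm:multiple} will use. Restricting to $\bo{\Sigma}_{h0}^{\perp}$ is exactly what removes the spurious zero eigenvalues coming from $\bo{\Sigma}_{h0}$.
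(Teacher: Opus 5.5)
Your proof is correct and takes the same route as the paper, whose proof only says that \eqref{deigmpdiv} comes from eliminating $u_h$ via $\ddiv\bs_h=\lambda_h u_h$ and normalizing $\bs_h$. Reading \eqref{deigmpdiv} as the variational identity $(\ddiv\bar\bs_h,\ddiv\bt_h)=\bar\lambda_h(\bar\bs_h,\bt_h)$ for all $\bt_h\in\bo{\Sigma}_{h0}^{\perp}$, and supplying the converse direction, is what makes that one-line argument precise (the appeal to surjectivity of $\ddiv$ in your second direction is unnecessary, since $\ddiv\bs_h=\lambda_h u_h$ holds by construction).
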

\begin{proof}
The formulation \eqref{deigmpdiv} is nothing but eliminating $u_h$ from the mixed problem \eqref{deigmp} and normalizing $\bs_h$. We do not distinguish $\bar\lambda_h$ from $\lambda_h$ in the following text.
\end{proof}
Define the Rayleigh quotient as
$$R(\bt_h):=\frac{(\ddiv\bt_h,\ddiv\bt_h)}{(\bt_h,\bt_h)},~\forall \bt_h\in\bo{\Sigma}_{h0}^{\perp}\backslash\{\bo{0}\}.$$

\begin{lemma}\label{min-max}
The min-max principle gives
$$\bar{\lambda}_{k,h} = \min\limits_{\substack{\bo{X}_h^k\subset\bo{\Sigma}_{h0}^{\perp},\\\dim \bo{X}_h^k=k}}\max\limits_{\substack{\bt_h\in\bo{X}_h^k,\\\bt_h\neq\bo{0}}}R(\bt_h),$$
and
$$\bar{\lambda}_{k,h} = \max\limits_{\substack{\bo{X}_h^{k-1}\subset\bo{\Sigma}_{h0}^{\perp},\\\dim \bo{X}_h^{k-1}=k-1}}\min\limits_{\substack{\bt_h\in\left(\bo{\Sigma}_{h0}\oplus\bo{X}_h^{k-1}\right)^{\perp},\\\bt_h\neq\bo{0}}}R(\bt_h).$$
\end{lemma}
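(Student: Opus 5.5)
The plan is to reduce the statement to the classical Courant--Fischer theorem for a symmetric generalized eigenvalue problem on the finite dimensional space $\bo{\Sigma}_{h0}^{\perp}$. The first step is to check that the bilinear form $(\ddiv\cdot,\ddiv\cdot)$ is positive definite on $\bo{\Sigma}_{h0}^{\perp}$. If $\bt_h\in\bo{\Sigma}_{h0}^{\perp}$ and $\ddiv\bt_h=0$, then $\bt_h\in\bo{\Sigma}_{h0}\cap\bo{\Sigma}_{h0}^{\perp}=\{\bo{0}\}$. So on $\bo{\Sigma}_{h0}^{\perp}$ both $(\cdot,\cdot)$ and $(\ddiv\cdot,\ddiv\cdot)$ are inner products.

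Next, I will expand $\bo{\Sigma}_{h0}^{\perp}$ in a $(\cdot,\cdot)$-orthonormal eigenbasis. By the preceding lemma, the eigenfunctions $\bar\bs_{j,h}=\lambda_{j,h}^{-1/2}\bs_{j,h}$ are $L^2$-orthonormal, as noted in Section \ref{sec:pre}. They lie in $\bo{\Sigma}_{h0}^{\perp}$: taking $\bt_h\in\bo{\Sigma}_{h0}$ in the first equation of \eqref{deigmp} gives $(\bs_h,\bt_h)=0$. There are $N$ of them. Since $\ddiv:\bo{\Sigma}_h\to V_h$ is onto by the inf-sup condition, $\dim\bo{\Sigma}_{h0}^{\perp}=N$, so these functions form a basis of $\bo{\Sigma}_{h0}^{\perp}$.

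They also diagonalize the divergence form: $(\ddiv\bar\bs_{i,h},\ddiv\bar\bs_{j,h})=\lambda_{j,h}\delta_{ij}$. This follows because $\ddiv\bar\bs_{j,h}=\lambda_{j,h}^{1/2}u_{j,h}$ and the $u_{j,h}$ are orthonormal. Hence for $\bt_h=\sum_j c_j\bar\bs_{j,h}$ we get
$$R(\bt_h)=\frac{\sum_j\lambda_{j,h}c_j^2}{\sum_jc_j^2}.$$

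The two formulas then follow from the standard argument.

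\emph{Min-max.} Choosing $\bo{X}_h^k=\spn\{\bar\bs_{1,h},\dots,\bar\bs_{k,h}\}$ gives $\max R=\lambda_{k,h}$. Conversely, any $k$-dimensional subspace meets $\spn\{\bar\bs_{j,h}\}_{j\ge k}$ nontrivially by a dimension count, and on that intersection $R\ge\lambda_{k,h}$.

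\emph{Max-min.} Here the orthogonal complement is taken in $\bo{\Sigma}_h$. The complement $(\bo{\Sigma}_{h0}\oplus\bo{X}_h^{k-1})^{\perp}$ equals the $L^2$-orthogonal complement of $\bo{X}_h^{k-1}$ inside $\bo{\Sigma}_{h0}^{\perp}$, because $\bo{X}_h^{k-1}\subset\bo{\Sigma}_{h0}^{\perp}$.
\begin{itemize}
\item Choosing $\bo{X}_h^{k-1}=\spn\{\bar\bs_{1,h},\dots,\bar\bs_{k-1,h}\}$ gives $\min R=\lambda_{k,h}$.
\item For any other $(k-1)$-dimensional $\bo{X}_h^{k-1}$, the complement has dimension $N-k+1$. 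It therefore meets $\spn\{\bar\bs_{1,h},\dots,\bar\bs_{k,h}\}$ nontrivially, and there $R\le\lambda_{k,h}$.
\end{itemize}

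The only point needing care is the identification of the complement in the max-min formula and the nondegeneracy on $\bo{\Sigma}_{h0}^{\perp}$. Both are routine linear algebra, so I expect no real obstacle.
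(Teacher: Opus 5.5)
Your proposal is correct and follows the paper's route. The paper gives no argument beyond invoking the classical (Courant--Fischer) min-max principle, and you spell out the routine facts it leaves implicit: the scaled fluxes $\lambda_{j,h}^{-1/2}\bs_{j,h}$, $j=1,\dots,N$, form an $L^2$-orthonormal basis of $\bo{\Sigma}_{h0}^{\perp}$ that diagonalizes $(\ddiv\cdot,\ddiv\cdot)$, and $(\bo{\Sigma}_{h0}\oplus\bo{X}_h^{k-1})^{\perp}$ is the orthogonal complement of $\bo{X}_h^{k-1}$ inside $\bo{\Sigma}_{h0}^{\perp}$, so the standard dimension counts apply.
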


\begin{lemma}\label{RQProperty}
Take $\bt_h = \sum_{k}\beta_k\bs_{k,h}$, where $\bs_{k,h}$ is the eigenfunctions of \eqref{deigmp} and $\sum_{k}\beta_k^2=1$, then
$$R(\bt_h) = \sum_{k}\beta_k^2R(\bs_{k,h}).$$
\end{lemma}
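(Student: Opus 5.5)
The plan is to expand both the numerator and the denominator of $R(\bt_h)$ in the discrete eigenbasis and use the orthogonality relations recorded after \eqref{deigmp}.

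First, we fix the normalization of the $\bs_{k,h}$. The relation $\sum_k\beta_k^2=1$ is only meaningful if the $\bs_{k,h}$ are $L^2$-orthonormal. So we take them normalized as in \eqref{deigmpdiv}, i.e.\ $\bar\bs_{k,h}=\lambda_{k,h}^{-1/2}\bs_{k,h}$ with $\|\bar\bs_{k,h}\|_0=1$. These lie in $\bo{\Sigma}_{h0}^{\perp}$: indeed, for any $\bt_h\in\bo{\Sigma}_{h0}$, the first equation of \eqref{deigmp} gives $(\bs_{k,h},\bt_h)=(\ddiv\bt_h,u_{k,h})=0$.

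From $(\bs_{k,h},\bs_{l,h})=\lambda_{k,h}\delta_{k,l}$ we get $(\bar\bs_{k,h},\bar\bs_{l,h})=\delta_{k,l}$. Hence $(\bt_h,\bt_h)=\sum_k\beta_k^2=1$.

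For the numerator, we use $\ddiv\bs_{k,h}=\lambda_{k,h}u_{k,h}$ together with $(u_{k,h},u_{l,h})=\delta_{k,l}$. This gives
$$(\ddiv\bar\bs_{k,h},\ddiv\bar\bs_{l,h})=\lambda_{k,h}^{1/2}\lambda_{l,h}^{1/2}(u_{k,h},u_{l,h})=\lambda_{k,h}\delta_{k,l}.$$
In particular $R(\bar\bs_{k,h})=\lambda_{k,h}$. Expanding the bilinear form then yields
$$(\ddiv\bt_h,\ddiv\bt_h)=\sum_{k,l}\beta_k\beta_l\lambda_{k,h}\delta_{k,l}=\sum_k\beta_k^2R(\bar\bs_{k,h}).$$
Dividing by $(\bt_h,\bt_h)=1$ gives the claim. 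Since the Rayleigh quotient is scale invariant, $R(\bar\bs_{k,h})=R(\bs_{k,h})$, so the statement holds as written.

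The only real obstacle is the normalization bookkeeping. We must make sure the coefficients $\beta_k$ refer to the $L^2$-normalized $\bs_{k,h}$; otherwise the cross terms still vanish, but the weights become $\beta_k^2\lambda_{k,h}$ rather than $\beta_k^2$. Once this is fixed, both the numerator and the denominator are diagonal and no cross terms survive.
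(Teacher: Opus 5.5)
Your proof is correct. The paper states this lemma without proof, and your argument is exactly the evident computation it relies on: you diagonalize numerator and denominator in the discrete eigenbasis using $(u_{k,h},u_{l,h})=\delta_{k,l}$ and $\ddiv\bs_{k,h}=\lambda_{k,h}u_{k,h}$, and you check that $\bs_{k,h}\in\bo{\Sigma}_{h0}^{\perp}$. Your normalization caveat is a genuine correction, not mere bookkeeping: with the paper's unnormalized $\bs_{k,h}$ (where $\|\bs_{k,h}\|_0^2=\lambda_{k,h}$) the identity fails in general, since the correct weights are $\beta_k^2\lambda_{k,h}/\sum_j\beta_j^2\lambda_{j,h}$ rather than the $\beta_k^2\lambda_{k,h}$ you wrote, so the lemma must be read with $\bar{\bs}_{k,h}=\lambda_{k,h}^{-1/2}\bs_{k,h}$, as you did.
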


\begin{lemma}\label{RQIntp}
Let $\bs_I$ be the interpolation of the k-th eigenfunction $\bs_k$ with $\bs_k\in\bo{H}^4(\OO)$, then
\begin{equation}
R(\bs_I)-\lambda_k = (\bs_k-\bs_I,\bs_I) + O(h^4).
\end{equation} 
\end{lemma}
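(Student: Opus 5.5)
We need to show $R(\bs_I)-\lambda_k=(\bs_k-\bs_I,\bs_I)+O(h^4)$. Note first that $R$ is defined on $\bo{\Sigma}_{h0}^{\perp}$, but the quotient formula makes sense for any nonzero $\bt_h\in\bo{\Sigma}_h$, and we interpret $R(\bs_I)$ that way. The plan is to compute numerator and denominator of $R(\bs_I)$ separately, using the commuting property $\ddiv\bs_I=\Pi_0\ddiv\bs_k=\lambda_k\Pi_0u_k$. This follows from \eqref{erreqintp}, since $\ddiv\bs_I$ is piecewise constant.

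\emph{Numerator.} We have $(\ddiv\bs_I,\ddiv\bs_I)=\lambda_k^2\|\Pi_0u_k\|_0^2$. Using the first equation of \eqref{eigmp} with $\bt=\bs_I\in\bo{\Sigma}$,
$$(\bs_k,\bs_I)=(\ddiv\bs_I,u_k)=\lambda_k(\Pi_0u_k,u_k)=\lambda_k\|\Pi_0u_k\|_0^2 .$$
Hence the numerator equals $\lambda_k(\bs_k,\bs_I)$ exactly. This is the key identity.

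\emph{Denominator.} We have $(\bs_I,\bs_I)=(\bs_k,\bs_I)-(\bs_k-\bs_I,\bs_I)$. Therefore
$$R(\bs_I)=\frac{\lambda_k(\bs_k,\bs_I)}{(\bs_k,\bs_I)-(\bs_k-\bs_I,\bs_I)}
=\lambda_k\Big(1-\frac{(\bs_k-\bs_I,\bs_I)}{(\bs_k,\bs_I)}\Big)^{-1}.$$
Set $\epsilon:=(\bs_k-\bs_I,\bs_I)/(\bs_k,\bs_I)$. By Theorem \ref{thm:integralExpansion} applied with $\bt_h=\bs_I$, whose $L^2$ norm is bounded, we get $(\bs_k-\bs_I,\bs_I)=O(h^2)$. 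Also $(\bs_k,\bs_I)=\lambda_k\|\Pi_0u_k\|_0^2=\lambda_k(1-O(h^2))$, since $\|u_k-\Pi_0u_k\|_0=O(h)$. Thus $\epsilon=O(h^2)$, and expanding gives $R(\bs_I)=\lambda_k(1+\epsilon+O(\epsilon^2))=\lambda_k+\lambda_k\epsilon+O(h^4)$.

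\emph{Main step.} It remains to show $\lambda_k\epsilon=(\bs_k-\bs_I,\bs_I)+O(h^4)$. Since $\lambda_k/(\bs_k,\bs_I)=1/\|\Pi_0u_k\|_0^2=1+O(h^2)$, we obtain
$$\lambda_k\epsilon=(\bs_k-\bs_I,\bs_I)(1+O(h^2))=(\bs_k-\bs_I,\bs_I)+O(h^2)\cdot O(h^2).$$
This uses only the crude bound $(\bs_k-\bs_I,\bs_I)=O(h^2)$, which follows from \eqref{explem}, or even from \eqref{explem0} together with the boundedness of $\bs_I$. The lemma follows. The only delicate point is verifying that $(\bs_k-\bs_I,\bs_I)$ is genuinely $O(h^2)$ rather than $O(h)$; Cauchy–Schwarz with \eqref{errintp} would give only $O(h)$. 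That is why the integral expansion of Theorem \ref{thm:integralExpansion} is needed here, and it requires $\bs_k\in\bo{H}^4(\OO)$.
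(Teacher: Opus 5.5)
Your proof is correct and follows the paper's route. Both arguments rest on the exact identity $R(\bs_I)-\lambda_k=\frac{\lambda_k}{(\bs_I,\bs_I)}(\bs_k-\bs_I,\bs_I)$, obtained from $\ddiv\bs_I=\lambda_k\Pi_0u_k$ and the first equation of \eqref{eigmp}, together with $(\bs_k-\bs_I,\bs_I)=O(h^2)$ from the integral expansion; your $\lambda_k\epsilon/(1-\epsilon)$ is the same expression. The only cosmetic difference is the control of the prefactor: you use $(\bs_k,\bs_I)=\lambda_k\|\Pi_0u_k\|_0^2=\lambda_k(1-O(h^2))$, while the paper uses $\lambda_k-(\bs_I,\bs_I)=\|\bs_k-\bs_I\|_0^2+2(\bs_k-\bs_I,\bs_I)=O(h^2)$.
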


\begin{proof}
Direct computation gives
\begin{equation}
\begin{aligned}
R(\bs_I)-\lambda_k &= \frac{(\ddiv\bs_I,\ddiv\bs_I)-\lambda_k(\bs_I,\bs_I)}{(\bs_I,\bs_I)}\\
&= \frac{\lambda_k}{(\bs_I,\bs_I)}((\ddiv\bs_I,u_k)-(\bs_I,\bs_I))\\
&= \frac{\lambda_k}{(\bs_I,\bs_I)}(\bs_k-\bs_I,\bs_I),
\end{aligned}
\end{equation}
and by applying \eqref{errintp} and \eqref{explem}, we can see
$$\lambda_k - (\bs_I,\bs_I) = (\bs_k-\bs_I,\bs_k+\bs_I) = \|\bs_k-\bs_I\|_0^2 + 2(\bs_k-\bs_I,\bs_I) = O(h^2),$$
which completes the proof.
\end{proof}

Recall that $\Lambda_k$ is the index set $\{j:\lambda_j = \lambda_k\}$. Suppose we have all decomposition pairs $\{(m_i,n_i)\}_{i=1}^p$ with $m_i^2+n_i^2 = \lambda_k$, where we recall each pair $(m_i,n_i)$ is a \enquote{frequency} and we do not distinguish the order of $m_i$ and $n_i$. The pairs are sorted in ascending order of $m_i^4+n_i^4$. Each \enquote{frequency} identities an eigenspace $E_k^i = \spn\{u_{m_i,n_i},u_{n_i,m_i}\}$. Basic inequalities give $m_i^4+n_i^4>m_j^4+n_j^4$ iff $|m_i-n_i|>|m_j-n_j|$. Then the eigenspace for $\lambda_k$ can be rewritten as $E_k=\bigoplus_{i=1}^p E_k^i$. We denote the corresponding eigenspace of discrete problem \eqref{deigmp} as $E_{k,h}$. The standard spectral theory gives $\delta(E_k,E_{k,h}) = O(h)$, and thus $\dim(E_k)=\dim(E_{k,h}) = N_k$. If $N_k$ is odd, then $N_k = 2p-1$, otherwise $N_k=2p$. There is an orthogonal decomposition $E_{k,h}=\bigoplus_{i=1}^p E_{k,h}^i$ where the discrete eigenpairs are sorted in ascending order of eigenvalues and then the eigenfunctions corresponding to pair $(m_i,n_i)$ spans $E_{k,h}^i$ with $\dim(E_{k,h}^i) = \dim(E_k^i)$. 

\begin{lemma}\label{lem:Spaceconv}
Given $\OO=[0,\pi]^2$ and the mesh is uniform. Then there exists an eigenspace $E_{k,h}^1\subset E_{k,h}$ such that $\delta(E_{k,h}^1,E_k^1) = O(h)$.
\end{lemma}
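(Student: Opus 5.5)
We argue with the Rayleigh quotient, the min-max principle (Lemma \ref{min-max}) and the interpolation estimate of Lemma \ref{RQIntp}. Here $E_{k,h}^1$ is the span of the first $\dim(E_k^1)$ discrete eigenfunctions in $E_{k,h}$, and $E_k^1$ is the lowest ``frequency'', i.e. the pair with the smallest $m_1^4+n_1^4$. Everything is done in the $\bo{\Sigma}$ variable (the $u$ statement then follows from $u_h=\lambda_h^{-1/2}\ddiv\bar\bs_h$ and $u=\lambda^{-1}\ddiv\bs$). So we set $\bo{F}_k^1=\spn\{\nabla u_{m_1,n_1},\nabla u_{n_1,m_1}\}$ and aim to show $\delta(\bo{F}_k^1,\bo{F}_{k,h}^1)=O(h)$.

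\textbf{Step 1: quadratic form on the exact eigenspace.} For $\bs=-\nabla u$ with $u=\sum_i\sum_{(m,n)}c_{m,n}u_{m,n}\in E_k$ and $\|u\|_0=1$, combine Lemma \ref{RQIntp} with Theorem \ref{thm:integralExpansion} on the uniform mesh. After integrating by parts on the square, as in \eqref{result2}, this gives
$$R(\bs_I)-\lambda_k=\frac{h^2}{12}\int_\OO (u_{xx}^2+u_{yy}^2)\,\dd x\dd y+O(h^4).$$
Since the $u_{m,n}$ are $L^2$-orthonormal and $u_{xx}$, $u_{yy}$ act diagonally on them, the integral equals $\sum c_{m,n}^2(m^4+n^4)$. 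The form is therefore diagonal with respect to the frequency decomposition $E_k=\bigoplus_iE_k^i$. Its smallest value, $\mu_1=m_1^4+n_1^4$, is attained exactly on $E_k^1$, and there is a gap $g=\mu_2-\mu_1>0$ independent of $h$.

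\textbf{Step 2: the lowest discrete eigenvalues in the cluster.} Let $d_1=\dim E_k^1$. The interpolants of $\bo{F}_k^1$ form a $d_1$-dimensional subspace. Project it onto $\bo{\Sigma}_{h0}^\perp$; this does not change the divergence and only decreases the $L^2$ norm, so $R$ can only increase by a controlled amount. We must check that this correction is $O(h^4)$, using that $\bs_I$ is $O(h^2)$-close to $R_h\bs\in\bo{\Sigma}_{h0}^{\perp}$ by Lemma \ref{lem:Ritesupc}, so the $\bo{\Sigma}_{h0}$ component is $O(h^2)$. The min-max principle then yields
$$\lambda_{j,h}\le\lambda_k+\frac{\mu_1h^2}{12}+O(h^4)$$
for the first $d_1$ indices $j$ of the cluster. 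Conversely, Theorem \ref{thm:delicateExpansion} applied to each discrete eigenfunction, with its associated $u\in E_k$ from Theorem \ref{thmsupc}, shows that every discrete eigenvalue in the cluster satisfies $\lambda_{j,h}-\lambda_k\ge\mu_1h^2/12+O(h^4)$.

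\textbf{Step 3: from Rayleigh quotients to the gap.} Take a normalized $\bt_h\in\bo{F}_{k,h}^1$. By Lemma \ref{RQProperty} and Step 2, $R(\bt_h)-\lambda_k=\mu_1h^2/12+O(h^4)$. By Theorem \ref{thmsupc} and the gap bound $\delta(\bo{F}_k,\bo{F}_{k,h})=O(h)$, there is $\bs\in\bo{F}_k$ with $\|\bt_h-\bs_I\|_0=O(h^2)$. Hence $R(\bt_h)$ and $R(\bs_I)$ agree to $O(h^4)$, and this step needs care: we must use that $\ddiv$ of the difference is also $O(h^2)$, by \eqref{supcds}. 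Decompose $\bs=\bs^1+\bs^\perp$ with $\bs^1\in\bo{F}_k^1$. Step 1 gives
$$\frac{h^2}{12}\big(\mu_1+g\|\bs^\perp\|^2\big)\le R(\bs_I)-\lambda_k\le\frac{\mu_1h^2}{12}+O(h^4),$$
so $\|\bs^\perp\|_0^2=O(h^2)$, i.e. $\|\bs^\perp\|_0=O(h)$. It follows that $\operatorname{dist}(\bt_h,\bo{F}_k^1)=O(h)$. Since the dimensions agree, this gives $\delta(\bo{F}_{k,h}^1,\bo{F}_k^1)=O(h)$, and the symmetric gap follows as in the finite-dimensional remark after the definition of $\delta$.

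\textbf{Main obstacle.} The hardest step is the $O(h^4)$ bookkeeping in Steps 2 and 3, namely keeping the Rayleigh quotient errors at $O(h^4)$ rather than $O(h^3)$. This uses the supercloseness of both $\bs$ and $\ddiv\bs$ together with the $O(h^4)$ residual in \eqref{explem}. Only with this precision does the $h^2$ gap $g$ dominate and force $\|\bs^\perp\|_0^2=O(h^2)$.
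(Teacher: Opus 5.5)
Your strategy is the paper's. You bound the cluster eigenvalues from above using interpolated eigenfunctions of $E_k^1$ and the min-max principle, use the frequency-diagonal $h^2$ coefficient $\sum c_{m,n}^2(m^4+n^4)$, and let the gap $g$ turn an $O(h^4)$ discrepancy into an $O(h)$ distance. Your $d_1$-dimensional trial space is a clean replacement for the paper's Step D. Your removal of the $\bo{\Sigma}_{h0}$-component of $\bs_I$ via $R_h\bs$ is more careful than the paper's.

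However, Step 2 does not deliver what you claim. Min-max over a $d_1$-dimensional subspace of $\bo{\Sigma}_{h0}^\perp$ bounds only the $d_1$-th smallest discrete eigenvalue overall. Unless $\lambda_k=\lambda_1$, this says nothing about the cluster at $\lambda_k$. You must move the trial space into $\bo{F}_{k^-,h}^\perp$, the complement in $\bo{\Sigma}_{h0}^\perp$ of the discrete eigenfunctions with eigenvalues below $\lambda_k$; there the min-max characterizes exactly the eigenvalues from the cluster upward. You must also show this move costs only $O(h^4)$ in $R$. That is the missing estimate, which the paper supplies in Step C. For $\lambda_j<\lambda_k$, test the first equation of \eqref{deigmp} with $\bs_I$ and that of \eqref{eigmp} with $\bs_{j,h}$. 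This gives
\[
(\bs_I,\bs_{j,h})=\lambda_k(u,u_{j,h})=\frac{\lambda_k}{\lambda_k-\lambda_{j,h}}\,(\bs_I-\bs,\bs_{j,h})=O(h^2),
\]
using the gap $\rho_k$ and the weak bound $(\bs-\bs_I,\bt_h)=O(h^2)\|\bt_h\|_0$ from \eqref{explem}. Since $L^2$-orthogonal discrete eigenfunctions are also orthogonal in $(\ddiv\cdot,\ddiv\cdot)$, deleting this component changes the numerator and denominator of $R$ by $O(h^4)$.

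Step 3's claim $R(\bt_h)=R(\bs_I)+O(h^4)$ is true, but your stated reason does not suffice. Knowing $\|\bo{e}\|_0,\|\ddiv\bo{e}\|_0=O(h^2)$ for $\bo{e}=\bt_h-\bs_I$ controls only the second-order part of $R(\bs_I+\bo{e})-R(\bs_I)$. The first-order part $2\|\bs_I\|_0^{-2}\big[(\ddiv\bs_I,\ddiv\bo{e})-R(\bs_I)(\bs_I,\bo{e})\big]$ is a priori only $O(h^2)$. An $O(h^2)$ error would leave you with $\|\bs^\perp\|_0=O(1)$, i.e.\ no conclusion. The first-order part is in fact $O(h^4)$ because $R$ is nearly stationary at $\bs_I$. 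Since $\ddiv\bo{e}\in V_h$,
\[
(\ddiv\bs_I,\ddiv\bo{e})-\lambda_k(\bs_I,\bo{e})=\lambda_k(u,\ddiv\bo{e})-\lambda_k(\bs_I,\bo{e})=\lambda_k(\bs-\bs_I,\bo{e})=O(h^2)\|\bo{e}\|_0,
\]
and $R(\bs_I)-\lambda_k=O(h^2)$, so both contributions are $O(h^4)$.

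Alternatively, you can bypass this comparison as the paper does in Steps A--B. Apply Theorem \ref{thm:delicateExpansion} to each of the $d_1$ discrete eigenfunctions spanning $\bo{F}_{k,h}^1$, and combine it with the corrected upper bound. This forces the non-$E_k^1$ part of each associated exact eigenfunction to be $O(h)$ directly.
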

\begin{proof}
Let $\lambda_{k,h}$ be the lowest eigenvalue of problem \eqref{deigmp} which converges to $\lambda_k$ and $(\bs_{k,h},u_{k,h})$ be the corresponding eigenfunctions. Denote the index set $\Lambda_{k^-} = \{j:\lambda_j\leq\lambda_k\}$. Let $\bo{F}_{k^-,h}$ be the summation of all the $\bo{F}_{j,h}$ with $j\in\Lambda_{k^-}$, and $\bo{F}_{k^-,h}^{\perp}$ be the orthogonal complementary space of $\bo{F}_{k^-,h}$ in $\bo{\Sigma}_{h0}^{\perp}$, and then from Lemma \ref{min-max}, $\bs_{k,h}$ minimize $R(\bt_h)$ in $\bo{F}_{k^-,h}^{\perp}$ and $R(\bs_{k,h}) = \lambda_{k,h}$.
 
\textbf{Step A:} We shall provide a lower bound for $\lambda_{k,h}$:
\begin{equation*}
\lambda_{k,h}-\lambda_k\geq\frac{m_1^4+n_1^4}{12}h^2+O(h^4).
\end{equation*}

From the convergence of eigenspace, for each $\T_h$ there exists $u\in\bigoplus_{i=1}^p E_k^i$ such that \eqref{result2} holds and
\begin{equation}
\|u_{k,h}-u\|_0=O(h)
\end{equation}
with $\|u\|_0=1$. Decompose $u$ as $u = \sum_{i=1}^p\alpha_i(h)u^i$ such that $u^i\in E_k^i$ with $\|u^i\|_0=1$ as well as $\sum_{i=1}^p\alpha_i^2(h)=1$, and the decomposition is unique. We omit $h$ in $\alpha_i(h)$ for simplicity hereinafter. Thus from \eqref{result2} we have
\begin{equation}\label{StepAbelow}
\lambda_{k,h} - \lambda_k = \frac{h^2}{12}\sum_{i=1}^p(m_i^4+ n_i^4)\alpha_i^2 + O(h^4)\geq \frac{h^2}{12}(m_1^4+ n_1^4) + O(h^4).
\end{equation}

\textbf{Step B:} We shall show when $d(u_{k,h},E_k^1) = O(h)$ is not true, where $d(u_{k,h},E_k^1)$ is the distance from $u_{k,h}$ to $E_k^1$, then
\begin{equation*}
\lambda_{k,h} - \lambda_k = \frac{h^2}{12}\sum_{i=1}^p(m_i^4+ n_i^4)\alpha_i^2 + O(h^4)> \frac{h^2}{12}(m_1^4+ n_1^4) + O(h^4),
\end{equation*}
i.e., the inequality in \eqref{StepAbelow} is strict. 

In fact, if we assume there is a subsequence of $\{u_{k,h}\}$ (still denoted as $\{u_{k,h}\}$) such that $\lim_{h\rightarrow0}h^{-1}d(u_{k,h},E_k^1)=+\infty$, then $d(u_{k,h},E_k^1)\leq \|u_{k,h}-u\| + d(u,E_k^1)$ implies
\begin{equation}
\lim_{h\rightarrow0}h^{-1}\max_{1<i\leq q}\{|\alpha_i|\}= +\infty.
\end{equation}
Thus the equation in \eqref{StepAbelow} cannot hold.

\textbf{Step C:} We shall choose $\bt_h\in\bo{F}_{k^-,h}^{\perp}$ such that
\begin{equation*}
R(\bt_h)-\lambda_k = \frac{m_1^4+n_1^4}{12}h^2+O(h^4).
\end{equation*}

Let $u$ be a unit eigenfunction in $E_k^1$ and $\bs_I$ be the Raviart-Thomas interpolation of $\bs=-\nabla u$. From the proof of Theorem \ref{thm:delicateExpansion}, we have
\begin{equation}
(\bs-\bs_I,\bs_I) = \frac{h^2}{12}\sum_{K_{i,j}}\int_{\OO}u_{xx}^2 + u_{yy}^2\dd x\dd y  + O(h^4) = \frac{h^2}{12}(m_1^4+ n_1^4) + O(h^4),
\end{equation}
and applying Lemma \ref{RQIntp}, we can obtain
\begin{equation}
R(\bs_I)-\lambda_k = \frac{h^2}{12}(m_1^4+ n_1^4) + O(h^4).
\end{equation}
Put $\bar\bs_I = \bs_I/\|\bs_I\|_0$ and decompose $\bar\bs_I$ into $\bar\bs_I = \bar\bs_I^1 + \bar\bs_I^2$ with $(\bar\bs_I^1,\bar\bs_I^2)\in\bo{F}_{k^-,h}\times\bo{F}_{k^-,h}^{\perp}$. Thus from Lemma \ref{RQProperty} we have
\begin{equation}\label{StepCRQD}
R(\bar\bs_I)= \|\bar\bs_I^1\|_0^2R(\bar\bs_I^1) + (1-\|\bar\bs_I^1\|_0^2)R(\bar\bs_I^2).
\end{equation}
Note that for each $\bs_{j,h}\in\bo{F}_{k^-,h}$, we have
\begin{equation}
(\bs_I,\bs_{j,h}) = (\bs_I-\bs,\bs_{j,h}) + (\bs-\bs_I,\bs_{j,h}-\bs_j) + (\bs_I,\bs_{j,h}-\bs_{j,I}) + (\bs_I,\bs_{j,I}-\bs_j)= O(h^2)
\end{equation}
from \eqref{errsigma}, Theorem \ref{thm:integralExpansion}, \eqref{supcs}, and the Cauchy-Schwarz inequality, which implies $\|\bar\bs_I^1\|_0 = O(h^2)$. Then we can derive from \eqref{StepCRQD} that
$$R(\bar\bs_I^2) = (1+O(h^4))R(\bs_I) = \lambda_k + \frac{h^2}{12}(m_1^4+ n_1^4) + O(h^4),$$
which means $\bt_h=\bar\bs_I^2$ is what we desire.

If $m_1=n_1$, the proof is completed. Otherwise, we go to Step D.

\textbf{Step D:} We shall find another $\bt_h^2\in\bo{F}_{k^-,h}^{\perp}$ with $(\bt_h^2,\bs_{k,h})=0$ such that
\begin{equation*}
R(\bt_h^2) = \lambda_k + \frac{h^2}{12}(m_1^4+ n_1^4) + O(h^4).
\end{equation*}  

Since $m_1\neq n_1$, $\dim(E_k^1)=2$. From Step B we can choose $u\in E_k^1$ such that $\|u-u_{k,h}\|_0 = O(h)$ and the procedure in Step C is valid. Now let $\tilde{u}$ be another unit eigenfunction in $E_k^1$ orthogonal to $u$ in $L^2$ norm. Then the same procedure in Step C can be done: Put $\bar{\tilde\bs}_I = \tilde\bs_I/\|\tilde\bs_I\|_0$ and decompose $\bar{\tilde\bs}_I$ into $\bar{\tilde\bs}_I = \bar{\tilde\bs}_I^1 + \bar{\tilde\bs}_I^2$ with $(\bar{\tilde\bs}_I^1,\bar{\tilde\bs}_I^2)\in\bo{F}_{k^-,h}\times\bo{F}_{k^-,h}^{\perp}$ and then $\|\bar{\tilde\bs}_I^1\|=O(h^2)$ and
$$R(\bar{\tilde\bs}_I^2) = \lambda_k + \frac{h^2}{12}(m_1^4+ n_1^4) + O(h^4).$$
Now decompose $\bar{\tilde\bs}_I^2$ as $\bar{\tilde\bs}_I^2 = \bt_h^1+\bt_h^2$ with $\bt_h^1 = \lambda_{k,h}^{-1}(\bar{\tilde\bs}_I^2,\bs_{k,h})\bs_{k,h}$ and $\bt_h^2 = \bar{\tilde\bs}_I^2-\bt_h^1$. Since $$R(\bt_h^1) = R(\bs_{k,h}) = \lambda_k + \frac{h^2}{12}(m_1^4+ n_1^4) + O(h^4),$$
we can derive from Lemma \ref{RQProperty} that
$$R(\bt_h^2) = \lambda_k + \frac{h^2}{12}(m_1^4+ n_1^4) + O(h^4).$$
Combining Lemma \ref{min-max} it indicates 
$$\lambda_{k+1,h} - \lambda_k \leq \frac{h^2}{12}(m_1^4+ n_1^4) + O(h^4),$$
and then from Step B we have $d(u_{k+1},E_{k,h}^1)=O(h)$, which completes the proof.
\end{proof}

\begin{lemma}\label{lem:Spaceconv2}
Given $\OO=[0,1]^2$ and the mesh is uniform. Then $E_{k,h} = \bigoplus_{i=1}^p E_{k,h}^i$ and $\delta(E_{k,h}^q,E_k^q) = O(h)$ for $q = 1,2,\ldots,p$.
\end{lemma}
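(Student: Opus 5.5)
I will prove Lemma \ref{lem:Spaceconv2} by induction on the frequency index $q$, repeating the argument of Lemma \ref{lem:Spaceconv} level by level. The frequencies $(m_i,n_i)$ are sorted so that $m_1^4+n_1^4<m_2^4+n_2^4<\cdots<m_p^4+n_p^4$; the inequalities are strict because $m^2+n^2$ is fixed and distinct unordered pairs have distinct $|m-n|$. Lemma \ref{lem:Spaceconv} gives the base case $q=1$: a subspace $E_{k,h}^1\subset E_{k,h}$ with $\dim E_{k,h}^1=\dim E_k^1$ and $\delta(E_{k,h}^1,E_k^1)=O(h)$. It remains to carry out the inductive step and to check that the spaces exhaust $E_{k,h}$.

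\textbf{Inductive step.} Assume $E_{k,h}^1,\ldots,E_{k,h}^{q-1}$ are spanned by consecutive discrete eigenfunctions and satisfy $\delta(E_{k,h}^i,E_k^i)=O(h)$ for $i<q$.
\begin{itemize}
\item[1.] Set $X=E_k$, $Y=E_{k,h}$, $X_*=\bigoplus_{i<q}E_k^i$ and $Y_*=\bigoplus_{i<q}E_{k,h}^i$. Since the gap is measured by the norm of the difference of orthogonal projections, the triangle inequality gives $\delta(X_*,Y_*)=O(h)$. Lemma \ref{space} then yields $\delta\bigl(\bigoplus_{i\ge q}E_k^i,\ Y_*^{\perp}\bigr)=O(h)$.
\item[2.] Let $\lambda_{l,h}$ be the smallest discrete eigenvalue whose eigenfunction lies in $Y_*^{\perp}\subset E_{k,h}$. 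Its eigenfunction $u_{l,h}$ is $O(h)$-close to some unit $u=\sum_{i\ge q}\alpha_iu^i$.
\item[3.] Steps A and B of Lemma \ref{lem:Spaceconv}, via \eqref{result2}, give
$$\lambda_{l,h}-\lambda_k=\frac{h^2}{12}\sum_{i\ge q}(m_i^4+n_i^4)\alpha_i^2+O(h^4)\ \ge\ \frac{h^2}{12}(m_q^4+n_q^4)+O(h^4),$$
with strict inequality at order $h^2$ unless $d(u_{l,h},E_k^q)=O(h)$.
\item[4.] For the matching upper bound, repeat Step C with $u\in E_k^q$. Take $\bs_I$ and remove its component in $\bo{F}_{k^-,h}$ together with the components along the already identified $\bs_{j,h}$ for $j\in\bigcup_{i<q}$ (indices of $E_{k,h}^i$). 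Each removed coefficient is $O(h^2)$ by the same estimate $(\bs_I,\bs_{j,h})=O(h^2)$. That estimate holds for every discrete eigenfunction converging to an eigenfunction of a different frequency, by orthogonality of exact eigenfunctions together with \eqref{supcs}. So the Rayleigh quotient changes only by $O(h^4)$, and the min-max characterization in Lemma \ref{min-max} gives $\lambda_{l,h}-\lambda_k\le \frac{h^2}{12}(m_q^4+n_q^4)+O(h^4)$.
\item[5.] Comparing with step 3 forces $d(u_{l,h},E_k^q)=O(h)$. If $m_q\ne n_q$, Step D supplies a second eigenfunction $u_{l+1,h}$ with the same property. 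Define $E_{k,h}^q$ as the span of these one or two eigenfunctions.
\end{itemize}

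\textbf{Conclusion.} Dimensions match at every level, and $\dim E_{k,h}=\dim E_k$. After $p$ steps the spaces $E_{k,h}^i$ are mutually orthogonal and exhaust $E_{k,h}$, which proves $E_{k,h}=\bigoplus_i E_{k,h}^i$.

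\textbf{Main obstacle.} Two points need care.
\begin{itemize}
\item[(a)] Converting ``distance $O(h)$ for each of the chosen eigenfunctions'' into $\delta(E_{k,h}^q,E_k^q)=O(h)$. This is a two-dimensional argument: for nearly orthonormal bases that are pairwise $O(h)$-close, the projections differ by $O(h)$.
\item[(b)] Keeping the $O(h^4)$ residual uniform when projecting out previously found eigenvectors. Here I would check first that $(\bs_I,\bs_{j,h})=O(h^2)$ uses only \eqref{supcs} and Theorem \ref{thm:integralExpansion} applied to the exact eigenfunction that $\bs_{j,h}$ approximates.
\end{itemize}
The strictness argument of Step B, as written for the level-$q$ coefficients $\alpha_i$, $i>q$, is then the same as in Lemma \ref{lem:Spaceconv}.
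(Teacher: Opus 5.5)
Your outline is the paper's: induction on the frequency level, then Lemma \ref{space} to pass to the complements $\bigoplus_{i\ge q}E_k^i$ and $Y_*^{\perp}$. Steps A--D of Lemma \ref{lem:Spaceconv} are repeated at level $q$, with the Step C test function purged of its components along $\bo{F}_{k^-,h}$ and along the already identified lower-frequency discrete eigenfunctions.

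The one place where your argument fails as written is step 4. There you claim that the coefficients $(\bs_I,\bs_{j,h})$, for $\bs_{j,h}$ in the already identified levels $i<q$ of the \emph{same} cluster, are $O(h^2)$ ``by orthogonality of exact eigenfunctions together with \eqref{supcs}''.
\begin{itemize}
\item Theorem \ref{thmsupc} makes $\bs_{j,h}$ superclose to $\bs_{j,I}$ for some $\bs_j$ in the whole eigenspace $\bo{F}_k$. The induction hypothesis only places that $\bs_j$ within $O(h)$ of the $i$-th frequency subspace, so $(\bs,\bs_j)$ is $O(h)$, not $0$.
\item If you instead take $\bs_j$ in the $i$-th frequency subspace, orthogonality holds but supercloseness is lost, and $(\bs_I,\bs_{j,h}-\bs_{j,I})$ is only $O(h)$.
\end{itemize}
Either way your estimate gives $O(h)$. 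That is exactly what the paper proves: $(\bs_I,\bs_{j,h})=(\bs_I-\bs,\bs_{j,h})+(\bs,\bs_{j,h}-\bs_j)=O(h)$. An actual $O(h^2)$ bound would need an extra argument you do not supply, e.g.\ using the already established level-$i$ eigenvalue expansion together with $m_i^4+n_i^4\neq m_q^4+n_q^4$.

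The $O(h)$ bound suffices, but for a reason your proposal does not state. The removed piece $a$ lies in the span of discrete eigenfunctions with eigenvalues $\lambda_k+O(h^2)$, so $R(a)-R(\bar\bs_I)=O(h^2)$. Write $\bar\bs_I=a+b$ with $\|a\|_0^2=\epsilon=O(h^2)$. By Lemma \ref{RQProperty},
\[
R(b)-R(\bar\bs_I)=\frac{\epsilon}{1-\epsilon}\bigl(R(\bar\bs_I)-R(a)\bigr)=O(h^2)\cdot O(h^2)=O(h^4).
\]
For the $\bo{F}_{k^-,h}$ part the balance is reversed: $\epsilon=O(h^4)$ while the Rayleigh-quotient gap is $O(1)$. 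With step 4 justified this way rather than by an $O(h^2)$ coefficient bound, the rest of your proposal goes through and coincides with the paper's proof.
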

\begin{proof}
We shall apply Lemma \ref{lem:Spaceconv} by induction. Suppose $\delta(E_{k,h}^q,E_k^q) = O(h)$ for $q=1,2,\ldots,t$. The details of proving $\delta(E_{k,h}^{t+1},E_k^{t+1}) = O(h)$ are analogous to that of Lemma \ref{lem:Spaceconv}. From Lemma \ref{space}, we have $$\delta\left(\bigoplus_{i=t+1}^p E_k^i,\bigoplus_{i=t+1}^p E_{k,h}^i\right)=O(h).$$
Thus all steps in the proof of Lemma \ref{lem:Spaceconv} can be done analogously, except for a mild modification in Step C. 

\textbf{Modification in Step C:} For a unit eigenfunction $u\in E_k^{t+1}$, let $\bs = -\nabla u$ and $\bs_I$ be the Raviart-Thomas interpolation of $\bs$. Put $\bar\bs_I = \bs_I/\|\bs_I\|_0$. We decompose $\bar\bs_I$ into $\bar\bs_I = \bar\bs_I^1 + \bar\bs_I^2 + \bar\bs_I^3$ with $$(\bar\bs_I^1,\bar\bs_I^2,\bar\bs_I^3)\in\bo{F}_{k^-,h}\times\bigoplus_{i=1}^t E_{k,h}^i\times\left(\bigoplus_{i=1}^t\bo{F}_{k^-,h}\oplus E_{k,h}^i\right)^{\perp}.$$
We have $\|\bar\bs_I^2\|_0 = O(h)$ since for each $u_{j,h}\in \bigoplus_{i=1}^t E_{k,h}^i$,
$$(\bs_I,\bs_{j,h}) = (\bs_I-\bs,\bs_{j,h}) + (\bs,\bs_{j,h}-\bs_j)=O(h).$$
Now that $\|\bar\bs_I^1\|_0 = O(h^2)$ (the same as Lemma \ref{lem:Spaceconv}), $\|\bar\bs_I^2\|_0 = O(h)$ and $R(\bar\bs_I) = \lambda_k + \frac{h^2}{12}(m_{q+1}^4+ n_{q+1}^4) + O(h^2)$, which indicate
$$R(\bar\bs_I^3) = \lambda_k + \frac{h^2}{12}(m_{q+1}^4+ n_{q+1}^4) + O(h^4).$$
\end{proof}

A combination of \eqref{result2} and Lemma \ref{lem:Spaceconv2} completes the proof of Theorem \ref{thm:multiple}. 

\end{appendices}

\backmatter

\section*{Declarations}
\bmhead{Conflict of Interest}
The authors declare that they have no conflict of interest.

\bmhead{Funding}
This paper is supported by Strategic Priority Research Program of Chinese Academy of Sciences (XDB 0640000), National Key R\&D Program of China (No. 2024YFA1012503 and 2022YFA1004500), National Natural Science Foundation of China (No. 12371372 and 12271512), and Natural Science Foundation of Fujian Province (No. 2025J01026).

\bmhead{Author Contributions}
\textbf{Yifan Yue:} Writing--original draft, Writing--reviewing $\&$ editing, Methodology, Visualization, Validation, Software; \textbf{Hongtao Chen:} Writing--review $\&$ editing, Conceptualization, Methodology, Supervision, Project administration, Funding acquisition; \textbf{Shuo Zhang:} Writing – review $\&$ editing, Conceptualization, Methodology, Supervision, Funding acquisition.

\bmhead{Acknowledgements}
Not applicable.



\end{document}